\newtheorem{Theorem}{Theorem}[section]
\newtheorem{Definition}[Theorem]{Definition}
\newtheorem{Remark}[Theorem]{Remark}
\newtheorem{Hypothesis}{Hypothesis}
\numberwithin{equation}{section}
\theoremstyle{plain}
\newtheorem{thm}{\protect\theoremname}[section]
\theoremstyle{plain}
\newtheorem{prop}[thm]{\protect\propositionname}
\theoremstyle{plain}
\newtheorem{lem}[thm]{\protect\lemmaname}
\theoremstyle{remark}
\theoremstyle{plain}
\newtheorem{cor}[thm]{\protect\corollaryname}
\theoremstyle{remark}
\newtheorem*{rem*}{\protect\remarkname}
\def\r2{\mathbb{R}^2}
\def\le{\left}
\def\r{\right}
\def\d{\delta}
\def\e{\epsilon}
\def\si{\sigma}
\def\L{{\mathcal L}}
\def\O{{\mathcal O}}
\def\F{{\mathcal F}}
\def\s0t{\sup_{t \in [0,T]}}
\def\ds{\displaystyle}
\def\beq{\begin{equation}}
\def\eeq{\end{equation}}
\def\barr{\begin{array}}
\def\earr{\end{array}}
\def\vs{\vspace{.01mm}   \\}
\def\rd{\reals\,^{d}}
\newcommand{\E}{\mathbb E}
\newcommand{\norm}[1]{\left\| #1\right\|}
\newcommand{\inner}[1]{\langle #1\rangle}
\newcommand{\Inner}[1]{\big\langle #1\big\rangle}
\newcommand{\abs}[1]{\lvert #1\rvert}
\newcommand{\dInner}[1]{\llangle #1\rrangle}
\def\E{{\mathbb{E}}}
\def\P{{\mathbb{P}}}
\DeclareMathOperator*{\esssup}{ess\,sup}
\date{}
  \providecommand{\corollaryname}{Corollary}
  \providecommand{\lemmaname}{Lemma}
\providecommand{\theoremname}{Theorem}
\theoremstyle{plain}
\providecommand{\corollaryname}{Corollary}
\providecommand{\lemmaname}{Lemma}
\providecommand{\propositionname}{Proposition}
\providecommand{\remarkname}{Remark}
\providecommand{\theoremname}{Theorem}
\begin{document}
\global\long\def\divg{{\rm div}\,}%

\global\long\def\curl{{\rm curl}\,}%

\global\long\def\rt{\mathbb{R}^{3}}%

\global\long\def\rd{\mathbb{R}^{d}}%

\global\long\def\rtwo{\mathbb{R}^{2}}%

\global\long\def\e{\epsilon}%

\title{Well-posedness and invariant measure for quasilinear parabolic SPDE on a bounded domain }

\author{ Mengzi Xie\thanks{Department of Mathematics, University of Maryland, mxie2019@umd.edu.}}

\maketitle

\selectlanguage{english}

\date{}

\begin{abstract}
	We study quasilinear parabolic stochastic partial differential equations with general multiplicative noise on a bounded domain in $\mathbb{R}^{d}$, with homogeneous Dirichlet boundary condition. We establish the existence and uniqueness of solutions in a $L^{1}$ setting, and we prove a comparison result and an $L^{1}$-contraction property for the solutions. In addition, we show the existence of an invariant measure in case of non-degenerate diffusion. Finally, we show the uniqueness and ergodicity of the invariant measure in $L^{1}$, in case of bounded diffusion and additive noise.

\end{abstract}

\section{Introduction}
\label{sec_introduction}

In this article we consider the following quasilinear stochastic parabolic equation on a bounded smooth domain $\mathcal{O}\subset \mathbb{R}^d$, $d\geq1$, endowed with the homogeneous Dirichlet boundary condition, 
\begin{equation}\label{limit_para}
	\le\{\begin{array}{l}
		\ds{\partial_{t}u(t,x)=\text{div}\big(b(u(t,x))\nabla u(t,x)\big)+\sigma(u(t,x))\partial_{t}w(t,x), }\\[10pt]
		\ds{u(0,x)=u_{0}(x),\ \ \ u(t,\cdot)|_{\partial\mathcal{O}}=0 },
	\end{array}\r.
\end{equation}
where $u_{0}\in L^{1}(\O)$. The diffusion function $b\in C^{1}(\mathbb{R})$ is positive and can be degenerate. The coefficient $\sigma$ is assumed to be Lipschitz continuous and the noise $w(t)$ is a cylindrical Wiener process.
 
The study of quasilinear parabolic equations has many important applications, including, for instance, two or three phase flows in porous media (see, e.g., \cite{chavent1986}). The addition of a stochastic perturbation is often used to account for numerical, empirical or physical uncertainties. In the deterministic case, the well-posedness theory for entropy solutions of quasilinear anisotropic degenerate parabolic equations has been widely studied (e.g., \cite{ben2004}, \cite{benilan1996mild}-\cite{carrillo1999}, \cite{chen2003} and \cite{liwang2012}). In the literature we just cited it is essential that the solutions $u$ possess the regularity $\nabla \mathfrak{b}(u)\in L^{2}$, where $\mathfrak{b}(u):=\int_{0}^{u}b(\xi)d\xi$. This excludes the possibility of considering general $L^{1}$ initial data, since in this case we cannot expect a solution to be more than $L^{1}$. To address this issue, Chen and Perthame (see \cite{chen2003}) introduced the notion of kinetic solution and provided a well-posedness theory for general degenerate parabolic-hyperbolic equations with $L^{1}(\mathbb{R}^{d})$ initial data. An alternative approach was given by Bendahmane and H.Karlsen (ref. \cite{ben2004}) based on a notion of renormalized entropy solution and the technique of doubling variables, which was introduced by Kruzhkov (see \cite{kruzkov1970}) and developed by Carrillo (see \cite{carrillo1994}, \cite{carrillo1999}).

In the stochastic case, a particular example of equation \eqref{limit_para} is given by the following stochastic porous media equation 
\begin{equation}\label{porous_media}
	du=\Delta\big[\text{sgn}(u)\abs{u}^{\theta}\big]dt+\sigma(u)dw,
\end{equation}
with $\theta>2$. Stochastic (generalized) porous media equations have attracted a lot of interest in recent years (see, e.g., \cite{barbu2006weak}, \cite{barbu2008nonnegative}, \cite{barbu2009existence}, \cite{daprato2006Strong} and the references therein). Existence and uniqueness of solutions for \eqref{porous_media} for a general class of monotone functions $b$ have been established, as well as the corresponding results about the ergodicity (ref. \cite{barbu2006weak}, \cite{barbu2010Invariant}, \cite{daprato2003Invariant}, \cite{daprato2006Strong} and the references therein). 
All of these results rely on an $H^{-1}$ approach, which consists in treating $\text{div}(b(\cdot)\nabla\cdot)$ as a monotone operator in $H^{-1}$. In fact, unlike in the deterministic case, there are few results dealing with the well-posedness of \eqref{limit_para} in a $L^{1}$ setting. The reason is that, especially in the case when the diffusion function $b$ is degenerate, the usual proof of existence of mild solutions relying on the Crandall-Liggett theory of $m$-accretive operators in $L^{1}$ cannot be applied to the stochastic case (ref. \cite{carrillo1999entropy}, \cite{crandall1972} and \cite{crandall1971}). In \cite{debussche16} Debussche, Hofmanova and Vovelle have studied the following Cauchy problem for a quasilinear degenerate parabolic-hyperbolic SPDE on the d-dimensional torus $\mathbb{T}^{d}$:
\begin{equation}\label{intro1}
	\partial_{t}u+\text{div}(A(u))=\text{div}\big(b(u)\nabla u\big)dt+\sigma(u)\partial_{t}w,\ \ \ u(0)=u_{0},
\end{equation}
where the flux function $A:\mathbb{R}\to\mathbb{R}^{d}$ is of class $C^{2}$, the diffusion  $b$ is defined in $\mathbb{R}$ with values in symmetric and positive semidefinite matrices, and its square-root matrix  is assumed to be locally $\gamma$-H{\"o}lder continuous for some $\gamma>1/2$. They have established the existence and uniqueness of a kinetic solution for \eqref{intro1}, with initial data  $u_{0}\in \bigcap_{p\geq1}L^{p}(\mathbb{T}^{d})$, under some restrictive assumptions on $A$ and $b$. More precisely, boundedness of function $b$ and polynomial growth of $A''$ had to be assumed.  More recently in \cite{gess18}, by introducing a generalized definition of kinetic solution and imposing a nondegeneracy condition for the symbol $\L$ associated to the kinetic form of \eqref{intro1}
\begin{equation*}
	\mathcal{L}(iu,in,\xi):=i\big(u+a(\xi)\cdot n\big)+n^{\ast}b(\xi)n,\ \ \ a:=\nabla A,
\end{equation*}
Gess and Hofmanova obtained new regularity results  based on averaging techiques, which allowed them to generalize the previous results in \cite{debussche16} to the case when $u_{0}\in L^{1}(\mathbb{T}^{d})$, without assuming growth conditions on the nonlinearities $A$ and $b$. In particular, they established a well-posedness theory for \eqref{intro1} in a full $L^{1}$ setting on $\mathbb{T}^{d}$. Moreover, in both \cite{debussche16} and \cite{gess18}, a comparison result and the so-called $L^{1}$-contraction property for kinetic solutions were proved on $\mathbb{T}^{d}$. Namely, it was shown that if $u_{1}$, $u_{2}$ are kinetic solutions to \eqref{intro1}, with initial data $u_{1,0},u_{2,0}\in L^{1}(\mathbb{T}^{d})$, respectively, then
\begin{equation}
	\label{contraction_intro}
	\esssup_{t\in[0,T]}\ \E\norm{(u_{1}(t)-u_{2}(t))^{+}}_{L^{1}(\mathbb{T}^{d})}\leq \norm{(u_{1,0}-u_{2,0})^{+}}_{L^{1}(\mathbb{T}^{d})}.
\end{equation}
Concerning equation \eqref{limit_para} on bounded domain $\O\subset \mathbb{R}^{d}$, there are only few results about its well-posedness in $L^{p}$, for $p\geq2$. In \cite{fridnew}, the Cauchy problem \eqref{intro1} has been studied in $\O\subset\mathbb{R}^{d}$, with a non-homogeneous boundary condition. Under conditions similar to whose assumed as in \cite{debussche16} for nonlinearities $A$ and $b$, and under the nondegeneracy condition assumed in \cite{gess18} for the symbol $\mathcal{L}$, the existence and uniqueness of bounded solutions for \eqref{intro1} in $L^{\infty}$ were was proved, for every initial condition $u_{0}$ in $L^{\infty}(\O)$ such that $u_{\min}\leq u_{0}\leq u_{\max}$. Finally, under non-degenerate assumption on $b\in C^{1}(\mathbb{R})$
\begin{equation}\label{particular}
	0<b_{0}\leq b(r)\leq b_{1},
\end{equation}
Cerrai and Xie \cite{cerraixie2023} proved the existence and uniqueness of a solution for \eqref{limit_para} in
\begin{equation*}
	L^{2}(\Omega;C([0,T];L^{2}(\O))\bigcap L^{2}([0,T];W^{1,2}_{0}(\O))
\end{equation*} 
for every initial condition $u_{0}\in L^{2}(\O)$.

However, to the best of our knowledge, there are currently no results about the well-posedness for \eqref{limit_para} in a $L^{1}$ setting on a bounded domain $\O\subset\mathbb{R}^{d}$. As in the periodic case, the $L^{1}$ framework for equation like \eqref{limit_para} offers several advantages. For instance, contration properties in $L^{1}$ norm are sometimes better than those in $H^{-1}$ norm, and moreover, they would allow to study invariant measures in $L^{1}$ (see, e.g., \cite{chen2019}).

In this paper we aim to study the well-posedness of \eqref{limit_para} in a $L^{1}$ setting. The notion of kinetic solution was introduced by Lions, Perthame and Tadmor (see \cite{lions1994}) for deterministic first-order scalar conservation laws, and was then extended to the case of SPDEs by Debussche and Vovelle (see \cite{debussche14}, \cite{debussche15}). As in the deterministic case, when the initial data $u_{0}$ are regular enough (for example, when $u_{0}\in L^{\infty}(\O)$), every (weak) solution is expected to have the regularity $b(u)\nabla u\in L^{2}(\O)$, and thus it is also a kinetic solution. But with the only assumption that the intial data $u_{0}$ belongs to $L^{1}(\O)$, we cannot expect even $\sqrt{b(u)}\nabla u$ to be square-integrable, so the usual kinetic solution (see, e.g., \cite{debussche16}) may not be well-defined because of the lack of integrability. For this reason, we adapt the defintion of kinetic solution in \cite{gess18} to our situation and introduce a notion of generalized kinetic solution, which is weaker than usual kinetic solution defined in \cite{fridnew} (see also \cite{debussche16}) and is suitable in the $L^{1}$ setting.

  Concerning the diffusion function $b:\mathbb{R}\to\mathbb{R}$, we assume that it is continuously differentiable and $\sqrt{b}$ is locally $\gamma$-H{\"o}lder continuous for some $\gamma>1/2$. Then, under some suitable assumptions on $\sigma$, we prove a comparison result and an $L^{1}$-contraction property for generalized kinetic solutions. Namely, if $u_{1},u_{2}$ are generalized kinetic solutions to \eqref{limit_para} with initial data $u_{1,0},u_{2,0}\in L^{1}(\O)$, respectively, then
  \begin{equation}
  	\esssup_{t\in[0,T]}\ \E\norm{(u_{1}(t)-u_{2}(t))^{+}}_{L^{1}(\O)}\leq \norm{(u_{1,0}-u_{2,0})^{+}}_{L^{1}(\O)}.
  \end{equation} 
  In particular, this result implies that the generalized kinetic solution to \eqref{limit_para} is unique, and so does either the kinetic or the weak solution. Our proof for this contractive property is based on a modification of the method of doubling variables introduced in \cite{gess18} for the torus $\mathbb{T}^{d}$. Since we are here considering Dirichlet boundary conditions, a localized version is needed and we have to make a use of the technique of boundary layer sequences.
  
  To obtain the existence of generalized kinetic solutions to \eqref{limit_para} with initial data in $L^{1}(\O)$, we start with the well-posedness for \eqref{limit_para} with initial data in $L^{p}(\O)$, for $p\geq2$. In the present paper we introduce two different types of conditions on the diffusion function $b$. One of them is a non-degenerate condition, in the sense that there exist some $c>0$, $b_{0}>0$ and $\theta\geq1$ such that
\begin{equation}\label{non_deg_intr}
	b_{0}\leq b(r)\leq c\big(1+\abs{r}^{\theta-1}\big).
\end{equation} 
Under this condition, we prove that given any initial condition $u_{0}\in L^{2\theta}(\O)$, there exists a unique weak solution $u\in L^{2\theta}(\Omega;L^{\infty}([0,T];L^{2\theta}(\O)))\cap L^{2}(\Omega;L^{2}([0,T];W^{1,2}_{0}(\O)))$ to \eqref{limit_para}, which has almost surely continuous trajectories in $L^{2\theta}(\O)$. As we can see, this result is a generalization of the results established in \cite[Proposition A.3]{cerraixie2023}, where the well-posedness for \eqref{limit_para} was proved in $L^{2}(\O)$ under condition \eqref{particular}, with $\theta=1$ in condition \eqref{non_deg_intr}. The other condition on $b$ that we provide is a degenerate one. Namely, we assume that there exist $\theta_{2}\geq \theta_{1}>2$ and $c_{1},c_{2}>0$ such that
\begin{equation*}
	c_{1}\abs{r}^{\theta_{1}-1}\leq b(r)\leq c_{2}\big(1+\abs{r}^{\theta_{2}-1}\big).
\end{equation*}  
This, in particular, allows us to consider stochastic (generalized) porous media equations (see, e.g., \cite{daprato2003Invariant}). Under this condition, we prove that given any initial condition $u_{0}\in L^{2\theta_{2}}(\O)$, there exists a unique kinetic solution $u\in L^{2\theta_{2}}(\Omega;L^{\infty}([0,T];L^{2\theta_{2}}(\O)))$ to \eqref{limit_para}, which has almost surely continuous trajectories in $L^{2\theta_{2}}(\O)$. Our approach is based on an approximation for \eqref{limit_para} where $b(u)$ is replaced by $b(u)+\tau$, for $\tau>0$ small, so that the results obtained under the non-degenerate condition can be applied to the approximating problem. The main difficulty lies in the fact that, there is no possibility to obtain uniform bounds in $W^{1,2}_{0}(\O)$ for the solutions of the approximating problems because of the degeneracy on $b$. To overcome this difficulty, our strategy is to first reduce the regurity problem to the periodic case by a localization, which was introduced in \cite{frid22} (also see \cite{fridnew}), and then apply the stochastic averaging lemma established by Gess and Hofmanova in \cite{gess18}. This will allow us to construct a kinetic solution by a compactness argument. Then, based on the $L^{1}(\O)$ contractive property of generalized kinetic solutions and the well-posedness that we established in $L^{p}(\O)$ for $p\geq2$, we prove that there is a unique generalized kinetic solution $u\in L^{1}(\Omega;L^{\infty}([0,T];L^{1}(\O)))$ to \eqref{limit_para} with initial data $u_{0}\in L^{1}(\O)$. Moreover, we show that $u$ has almost surely continuous trajectories in $L^{1}(\O)$.

After we have established the well-posedness for \eqref{limit_para} in $L^{1}(\O)$, we study the existence and uniqueness of invariant measures for $P_{t}$, the transition semigroup associated with \eqref{limit_para} in $L^{1}(\O)$. To this purpose, the existence and the uniqueness of an invariant meausure in $L^{2}(\O)$ under condition \eqref{particular} has been established in \cite{cerraixie2023} through an $H^{-1}$ approach. More specifically, it has been proven that if $u_{1},u_{2}$ are two generalized kinetic solutions to \eqref{limit_para}, with initial conditions $u_{1,0},u_{2,0}$, there exists some positive constant $\lambda>0$ such that
\begin{equation}\label{contraction_dual}
	\E\norm{u_{1}(t)-u_{2}(t)}_{H^{-1}}\leq e^{-\lambda t}\norm{u_{1,0}-u_{2,0}}_{H^{-1}},\ \ \ \ t\geq0.
\end{equation}
 In contrast with this strongly contractive property in $H^{-1}$, we prove that 
\begin{equation}\label{contraction_intr}
	\E\norm{u_{1}(t)-u_{2}(t)}_{L^{1}(\O)}\leq \norm{u_{1,0}-u_{2,0}}_{L^{1}(\O)},\ \ \ \ t\geq 0,
\end{equation}
and this, in particular, 
implies the Feller property of the Markovian semigroup $P_{t}$ in $L^{1}(\O)$. In the present paper, by the Krylov-Bogoliubov approach, we prove that if the diffusion $b$ satisfies the non-degenerate condition \eqref{non_deg_intr} and the coefficient $\sigma$ has sub-linear growth, then $P_{t}$ admits at least one invariant measure on $L^{1}(\O)$ supported in $W^{1,2}_{0}(\O)$. The study of the uniqueness and the ergodicity is relatively more challenging, because in contrast to the case in $H^{-1}$ where the stronger contraction \eqref{contraction_dual} holds, $L^{1}$-contraction \eqref{contraction_intr} does not guarantee the uniquness of invariant measures in $L^{1}(\O)$. Even worse is that, the classical approach (see, e.g., \cite{cerraiholtz2020}) by an argument with the (asymptotically) strong Feller property for Markovian semigroups seems not possible in the $L^{1}$ setting. For this reason we use the coupling method (see \cite{doeblin1937}) and an argument of smallness of noise, which was used in \cite{debussche15} (also see \cite{chen2019}). Then, relying on the fact that in the case of additive noise, $L^{1}$-contraction \eqref{contraction_intr} holds in the almost sure sense, we show that if the noise in \eqref{limit_para} is additive and the diffusion $b$ is bounded, then any pairs of generalized kinetic solutions will be arbitrarily close to each other after a long enough time almost surely (see Theorem \ref{uniqueness_inv}). In particular, this implies the uniqueness of the invariant measure for $P_{t}$ on $L^{1}(\O)$.

{\em Organization of the paper:} In Section \ref{sec_assumption}, we introduce all notations and assumptions. In Section \ref{sec_main_result} we first introduce the notion of weak and kinetic solution to \eqref{limit_para} in the $L^{p}$ setting, and give a definition of generalized kinetic solution which is suitable for the $L^{1}$ setting. Then we state the main result of the paper. In Section \ref{L1}, we introduce a localized version of doubling of variables (see Lemma \ref{contraction}). Then, we apply it to prove a comparison result and an $L^{1}$ contraction property for solutions to \eqref{limit_para}. In Section \ref{sec_wellposedness_Lp}, we prove the well-posedness of equation \eqref{limit_para} in $L^{p}$, for $p\geq2$. We also provide some energy estimates, which are needed in our construction of solutions in $L^{1}(\O)$. In Section \ref{sec_proof_main_thm}, we establish the well-posedness of equation \eqref{limit_para} given initial data in $L^{1}(\O)$. In Section \ref{sec_inv}, we study the existence and the uniqueness of the invariant measure for the transition semigroup associated to \eqref{limit_para} in $L^{1}(\O)$. Finally, in Appendix \ref{sec_appendix} we give the proof of Lemma \ref{contraction}.

\section{Assumptions and notations}\label{sec_assumption}

Throughout the present paper $\mathcal{O}$ is an open bounded domain in $\mathbb{R}^d$, with smooth boundary. We denote by $H$ the Hilbert space $L^2(\mathcal{O})$ and by $\langle \cdot , \cdot \rangle_H$ the corresponding  inner product. We use $\dInner{\cdot,\cdot}$ to denote the duality between the space of distribution over $\mathcal{O}\times\mathbb{R}$ and $C_{c}^{\infty}(\mathcal{O}\times\mathbb{R})$.For every $s>0$ and $p\geq1$, $W^{-s,p}_{0}(\O)$ is the completion of $C_c^\infty(\mathcal{O})$ with respect to the norm on $W^{s,p}(\O)$, and $W^{-s,p}(\O)$ is the dual space to $W^{s,p}_{0}(\O)$. It is well-known that for every $s\in\mathbb{R}$ and $p\geq1$, $W^{s,p}(\O)$ is a complete separable metric space, and the embeddings
\begin{equation*}
	W^{s_{1},p_{2}}(\O)\subset W^{s_{2},p_{2}}(\O),\ \ \ \ s_{1},s_{2}\in\mathbb{R},\ \ p_{1},p_{2}\in[1,+\infty),
\end{equation*}
are compact provided $s_{1}>s_{2}$ and $1/p_{1}-s_{1}/d<1/p_{2}-s_{2}/d$.

 Given the domain $\mathcal{O}$, we denote by $\{e_i\}_{i\in\mathbb{N}}\subset H^1:=W^{1,2}_{0}(\O)$ the complete orthonormal basis of $H$ which diagonalizes the Laplacian $A:=\Delta$, endowed with Dirichlet boundary conditions on $\partial\mathcal{O}$. Moreover, we denote by $\{-\alpha_i\}_{i\in \mathbb{N}}$ the corresponding sequence of eigenvalues, i.e.
\[A e_i=-\alpha_i e_i,\ \ \ \  i\in \mathbb{N}.\]
Next, for every $\d>0$ we denote $H^{\delta}:=W^{\delta,2}_{0}(\O)$, the completion of $C^{\infty}_{c}(\O)$ with respect to the norm  
\[\Vert u\Vert_{H^\delta}^2:=\sum_{i=1}^\infty \alpha_i^\delta \langle u,e_i\rangle_H^2,\]
and denote by $H^{-\delta}$ the dual space to $H^{\d}$. This implies, in particular, that $H^{1}$ is the completion of $C^{\infty}_{c}(\O)$ with respect to the norm $\norm{u}_{H^{1}}^{2}=\norm{\nabla u}_{H}^{2}$, and 
\begin{equation*}
	\norm{u}_{H}\leq \frac{1}{\sqrt{\alpha_{1}}}\norm{u}_{H^{1}},\ \ \ \ u\in H^{1}.
\end{equation*}

\subsection{About the boundary $\partial\O$}
\label{sec_boundary}

In order to take advantage of the fact that the boundary $\partial\mathcal{O}$ is locally the graph of a $C^{2}$ function, we introduce a system $\mathfrak{B}$ of balls (ref. \cite{frid17}, \cite{mascia2002}) as follows. An open ball $B=B(x_{0},r)$, with center at an arbitrary $x_{0}\in \partial\mathcal{O}$ and with some radius $r>0$, belongs to $\mathfrak{B}$, provided that, in some local coordinates $x=(x',x_{d})$, there exists some $h\in C^{2}(\mathbb{R}^{d-1})$ such that
\begin{equation*}
	B\cap \mathcal{O}=\Big\{x\in B:x_{d}<h(x')\Big\},\ \ \ \ B\cap\partial\mathcal{O}=\Big\{x\in B: x_{d}=h(x')\Big\}.
\end{equation*}
Note that we can choose a finite open covering $\{B_{i}\}_{i=0}^{N}$ of $\overline{\O}$ such that $B_{i}\in\mathfrak{B}$, $1\leq i\leq N$, and $B_{0}\subset\subset \O$.

Now, we recall the definition of a boundary layer sequence (ref. \cite{frid17}, \cite{frid22}, \cite{mascia2002}).
\begin{Definition}
	We call $\{\zeta_{\delta}\}$ an $\mathcal{O}$-boundary layer sequence if, for each $\delta>0$, $\zeta_{\delta}\in C(\overline{\mathcal{O}})\bigcap C^{2}(\O)$, $0\leq \zeta_{\delta}\leq1$, $\zeta_{\delta}(x)\to1$ for every $x\in\mathcal{O}$, as $\delta\to0$, and $\zeta_{\delta}=0$ on $\partial\mathcal{O}$.
\end{Definition}

The name of boundary-layersequnce for $\zeta_{\delta}$ is justified by the fact that the behavior of $\zeta_{\delta}$ as $\delta$ tends to zero gives an account of formation of boundary layers; indeed, we have $-\nabla\zeta_{\delta}$ converging to the outward normal $\nu$ of the boundary, and for any $\varphi\in (W^{1,2}(\O))^{d}$ we have 
\begin{equation}
	\label{boundary_laryer_property}
	\lim_{\delta\to0}\int_{\O}\varphi\cdot\nabla\zeta_{\delta}dx=-\lim_{\delta\to0}\int_{\O}\text{div}(\varphi)\zeta_{\delta}dx=-\int_{\O}\text{div}(\varphi)dx=-\int_{\partial\O}\varphi\cdot\nu\ dS.
\end{equation}

Here is an example for the $\O$-boundary layer sequence (ref. \cite[Section 5]{mascia2002}). Let $\zeta_{\delta}$ be the unique solution of 
\begin{equation*}
		\le\{\begin{array}{l}
		\ds{-\delta^{2}\Delta\zeta_{\delta}+\zeta_{\delta}=1,\ \ \ \ x\in\O }\\[10pt]
		\ds{\zeta_{\delta}=0,\ \ \ \ x\in\partial\O }.
	\end{array}\r.
\end{equation*}
Then $0\leq\zeta_{\delta}\leq 1$ is an $\O$-boundary layer sequence. Moreover, we have $\Delta_{x}\zeta_{\delta}\leq 0$.


\subsection{About the stochastic term}
\label{sec_stochastic_term}

Let $w(t)$ be a cylindrical Wiener process, defined on a complete stochastic basis  $(\Omega,\mathcal{F},(\mathcal{F}_t)_{t\geq 0},\mathbb{P})$. This means that $w(t)$ can be formally written as
\[
w(t)=\sum_{i=1}^\infty  \beta_i(t)\tilde{e}_{i},
\]
where $\{\beta_i\}_{i\in \mathbb{N}}$ is a sequence of independent standard Brownian motions on $(\Omega,\mathcal{F},(\mathcal{F}_t)_{t\geq 0},\mathbb{P})$,   $\{\tilde{e}_i\}_{i\in\,\mathbb{N}}$ is the complete orthonormal system in a separable in a Hilbert space $\mathcal{U}$. Moreover, if $\mathcal{U}_{0}$ is any Hilbert space containing $\mathcal{U}$ such that the embedding of $\mathcal{U}$ into $\mathcal{U}_{0}$ is Hilbert-Schmidt, we have that 
\begin{equation}
	\label{contb}w \in\,C([0,T];\mathcal{U}_{0}),\ \ \ \P\text{-a.s.}
\end{equation}

Next, we recall that for every two separable Hilbert spaces $E$ and $F$, $L_2(E,F)$ denotes the space of Hilbert-Schmidt operators from $E$ into $F$. $L_2(E,F)$ is a Hilbert space, endowed with the inner product
\[\langle A,B\rangle_{L_2(E,F)}=\mbox{Tr}_E\,[A^\star B]=\mbox{Tr}_F[B A^\star].\]
As well known, $L_2(E,F) \subset L(E,F)$ and
\begin{equation}
	\label{sm5}
	\Vert A\Vert_{L(E,F)}\leq 	\Vert A\Vert_{L_2(E,F)}.
\end{equation}

\bigskip

\begin{Hypothesis}\label{Hypothesis1}
	For each $h\in H$, the mapping $\si(h):\mathcal{U}\to H$ is defined by 
	\[[\sigma(h)\tilde{e}_i](x) = \sigma_i(x,h(x)), \ \ \ \ x \in\,\mathcal{O},\ \ \ \ i \in\,\mathbb{N},\] 
	for some mapping $\sigma_i\in C(\overline{\O}\times\mathbb{R})$. We assume that there exists a sequence $(\lambda_{i})_{i\in\mathbb{N}}$ of positive numbers satisfying $D:=4\sum_{i=1}^{\infty}\lambda_{i}^{2}<\infty$ such that 
	\begin{equation}
		\label{sgfine}
		\abs{\sigma_{i}(x,0)}+\abs{\nabla_{x}\sigma_{i}(x,\xi)}+\abs{\partial_{\xi}\sigma_{i}(x,\xi)}\leq \lambda_{i},\ \ \ \  x\in\O,\ \ \ \xi\in\mathbb{R}.
	\end{equation}
	Moreover, we assume that 
	\begin{equation}
		\label{sigma_boundary}
		\sigma_{i}(x,\xi)=0,\ \ \ \  x\in\partial{\O},\ \ \ \xi\in\mathbb{R}.
	\end{equation}
	
\end{Hypothesis}

\begin{Remark} Condition \eqref{sgfine} implies that for all $x,y\in\O$, and $\xi,\zeta\in\mathbb{R}$
	\begin{equation}
		\label{sgfine1}	
		\sum_{i=1}^\infty \vert \sigma_i(x,\xi) - \sigma_i(y,\zeta)\vert^2 \leq D\,\Big(\abs{x-y}^{2}+\vert \xi-\zeta\vert^{2}\Big),
	\end{equation}
	and 
	\begin{equation}
		\label{sgfine2}
		\Sigma^{2}(x,\xi):=\sum_{i=1}^{\infty}\abs{\sigma_{i}(x,\xi)}^{2}\leq D(1+\abs{\xi}^{2}),\ \ \ \  x\in\O,\ \ \ \xi\in\mathbb{R}.
	\end{equation}
	In particular, the mapping $\si:H\to L_{2}(\mathcal{U},H)$ is Lipschitz continuous, and 
	\begin{equation}
		\norm{\si(h)}_{L_{2}(\mathcal{U},H)}\leq c(1+\norm{h}_{H}),\ \ \  h\in H.
	\end{equation} Hence, given a predictable process $u\in L^{2}(\Omega\times[0,T];H)$, the stochastic integral
	\begin{equation*}
		\int_{0}^{t}\sigma(u(s))dw(s),\ \ \ t\in[0,T],
	\end{equation*}
	is a well-defined process taking values in $H$. 
\end{Remark}

\subsection{About the diffusion term}
\label{sec_diffusion}

We assume that the diffusion $b$ satisfies the following hypothesis.
\begin{Hypothesis}\label{Hypothesis2}
	The mapping $b$ belongs to $C^1(\mathbb{R})$, and $\sqrt{b}$ is locally $\gamma$-H{\"o}lder continuous for some $\gamma>1/2$, that is, for all $R>0$ there is a constant $C=C(R)$ such that
	\begin{equation}
		\label{holder_cont}
		\big\lvert\sqrt{b(r_{1})}-\sqrt{b(r_{2})}\big\rvert\leq C(R)\abs{r_{1}-r_{2}}^{\gamma},\ \ \ \forall r_{1},r_{2}\in\mathbb{R},\ \ \abs{r_{1}},\abs{r_{2}}\leq R.
	\end{equation}
\end{Hypothesis}

We shall also introduce two different types of conditions on $b$ as follows. 
\begin{Hypothesis}\label{Hypothesis3} We assume that $b\in C^{1}(\mathbb{R})$ and one of the following two conditions is satisfied:
	\begin{enumerate}[A]
		\item{(Non-degenerate condition).}\label{nondeg_condition} There exist some $\theta\geq 1$ and $b_{0},c>0$ such that
		\begin{equation}
			\label{nondeg_growth}
			b_{0}\leq b(r)\leq c(1+\abs{r}^{\theta-1}),\ \ \ \ \ r\in\mathbb{R}.
		\end{equation}
		
		\smallskip
		
		\item{(Degenerate condition).}\label{deg_condition}  There exist some $\theta_{2}\geq \theta_{1}>2$ and constants $c_{1},c_{2}>0$ such that
		\begin{equation}
			\label{deg_growth}
			c_{1}\abs{r}^{\theta_{1}-1}\leq b(r)\leq c_{2}(1+\abs{r}^{\theta_{2}-1}),\ \ \ \ \ r\in\mathbb{R}.
		\end{equation}
	\end{enumerate}
\end{Hypothesis}

In what follows, we shall denote
\[\mathfrak{b}(r)=\int_0^r b(\xi)\,d\xi,\  \ \ \ r \in\,\mathbb{R}.\]
One can immediately see that $\mathfrak{b}:\mathbb{R}\to\mathbb{R}$ is monotone and has at most polynomial growth.

\begin{Remark}
	\begin{enumerate}
		\item It is easy to check that, if the diffusion $b$ satisfies Hypothesis \ref{Hypothesis3}\ref{nondeg_condition}, then $\sqrt{b}$ is locally Lipschitz continuous, and thus Hypothesis \ref{Hypothesis2} is satisfied.
		
		\item A typical example of the diffusion $b$ satisfying both Hypotheses \ref{Hypothesis2} and \ref{Hypothesis3}\ref{deg_condition} is 
		\begin{equation*}
			b(r)=c\abs{r}^{\theta-1},\ \ \ \ \theta>2,\ \ \ c>0.
		\end{equation*}
		In this case, it can be verified that $\sqrt{b}$ is locally $\gamma$-H{\"o}lder continuous on $\mathbb{R}$, where 
		\begin{equation*}
			\gamma=1,\ \ \ \text{if}\ \ \theta\in[3,+\infty);\ \ \ \text{and}\ \ \ \gamma\in\big(1/2,(\theta-1)/2\big),\ \ \ \text{if}\ \ \theta\in(2,3).
		\end{equation*} 
	\end{enumerate}
\end{Remark}

Finally, we introduce a non-degeneracy condition on the symbol $\L$ (see \cite[Section 2.2]{gess18}) assiciated to the kinetic form of \eqref{limit_para}
\begin{equation*}
	\L(iu,in,\xi):=iu+b(\xi)n^{2},\ \ \ \ u\in\mathbb{R},\ \ n\in\mathbb{Z},\ \ \xi\in\mathbb{R}.
\end{equation*}
For $J,\delta>0$ and $\eta\in C_{b}^{\infty}(\mathbb{R})$ nonnegative, let 
\begin{equation*}
	\Omega_{\L}^{\eta}(u,n;\delta):=\big\{ \xi\in\text{supp}(\eta):\abs{\L(iu,in,\xi)}\leq \delta \big\},
\end{equation*}
\begin{equation*}
	\omega_{\L}^{\eta}(J;\delta):=\sup_{u\in\mathbb{R},n\in\mathbb{Z},\abs{n}\sim J}\abs{\Omega_{\L}^{\eta}(u,n;\delta) }
\end{equation*}
and $\L_{\xi}:=\partial_{\xi}\L$. Here we employ the notation $x\lesssim y$ if there exists a constant $C$ independent of the variables under consideration such that $x\leq Cy$ and we write $x\sim y$ if $x\lesssim y$ and $y\lesssim x$. By $x\lesssim_{z}y$ we mean that the corresponding proportional constant depends on $z$. As introduced in \cite{gess18}, a non-degeneracy condition on the symbol $\L$ is satisfied if there exists $\alpha\in(0,1)$, $\beta>0$ and a measurable map $\vartheta\in L_{\text{loc}}^{\infty}(\mathbb{R};[1,\infty))$ such that 
\begin{equation}\label{symbol_condition}
	\begin{array}{ll}
		\ds{\quad\quad\quad\quad\quad\quad\quad\omega_{\L}^{\eta}(J;\delta)\lesssim_{\eta}\Big(\frac{\delta}{J^{\beta}}\Big)^{\alpha}, }\\
		\vs
		\ds{\sup_{u\in\mathbb{R},n\in\mathbb{Z},\abs{n}\sim J}\sup_{\xi\in\text{supp}(\eta)}\frac{\abs{\L_{\xi}(iu,in,\xi)}}{\vartheta(\xi)}\lesssim_{\eta}J^{\beta}, \ \ \ \forall \delta>0,\ \ J\gtrsim 1 }.
	\end{array}
\end{equation} 
It can be immediately verified that (e.g., \cite[Section 2.4]{gess18}), if the mapping $b:\mathbb{R}\to\mathbb{R}$ satisfies Hypothesis \ref{Hypothesis3}\ref{deg_condition}, then for every $J,\delta>0$ and $\eta\in C^{\infty}_{c}(\mathbb{R})$, condition \eqref{symbol_condition} is satisfied with
\begin{equation*}
	\alpha=\frac{1}{\theta_{1}-1},\ \ \ \ \beta=2,\ \ \ \ \text{and}\ \ \ \ \vartheta(\xi):=1+\abs{\xi}^{\theta_{1}-2},\ \ \ \xi\in\mathbb{R}.
\end{equation*}

\section{The main result }
\label{sec_main_result}

\subsection{Weak and kinetic solution}

We start with the definition of weak solution to equation \eqref{limit_para}. 
\begin{Definition}
	[Weak solution] Let $u_{0}\in L^{p}(\O)$, for some $p\geq2$. We call an $(\mathcal{F}_{t})_{t\geq0}$ adapted process 
	\begin{equation*}
		u\in  L^{p}(\Omega;L^{\infty}([0,T];L^{p}(\O))) \bigcap L^{2}(\Omega;L^{2}([0,T];H^{1}))
	\end{equation*}
	is said to be a weak solution of equation \eqref{limit_para} if 
	\begin{equation*}
		\mathfrak{b}(u)=\int_{0}^{u}b(\xi)d\xi\in L^{2}(\Omega;L^{2}([0,T];H^{1})),
	\end{equation*}
	and for every test function $\varphi\in C^{\infty}_{0}(\mathcal{O})$
	\begin{equation}\label{weak_test}
		\begin{array}{ll}
			\ds{\Inner{u(t),\varphi}_{H} =\Inner{u_{0},\varphi}_{H}-\int_{0}^{t}\Inner{\nabla \mathfrak{b}(u(s)),\nabla\varphi}_{H}ds +\int_{0}^{t}\Inner{\varphi,\sigma(u(s))dw^{Q}(s)}_{H},\ \ \ \P\text{-a.s.} }.
		\end{array}
	\end{equation}
\end{Definition}

The following proposition is a localized version of the generalized It{\^o} formula proven in \cite{debussche16}:
\begin{prop}[Generalized It{\^o} formula]
	Let $u$ be a weak solution of equation \eqref{limit_para} with initial datum $u_{0}$, then for every $\phi\in C^{1}_{c}(\O)$ and every $\psi\in C^{2}(\mathbb{R})$ with $\psi''\in L^{\infty}(\mathbb{R})$
	\begin{equation}\label{gen_ito}
		\begin{array}{ll}
			&\ds{\int_{\O}\psi(u(t,x))\phi(x)dx =\int_{\O}\psi(u_{0}(x))\phi(x)dx-\int_{0}^{t}\int_{\O}\nabla \mathfrak{b}(u(s,x))\cdot \nabla\big(\psi'(u(s,x))\phi(x)\big)dxds }\\
			\vs
			&\ds{\quad\quad\quad\quad\quad +\frac{1}{2}\int_{0}^{t}\int_{\O}\psi''(u(s,x))\Sigma^{2}(x,u(s,x))\phi(x)dxds }\\
			\vs
			&\ds{\quad\quad\quad\quad\quad\quad\quad + \sum_{k=1}^{\infty}\int_{0}^{t}\int_{\O}\psi'(u(s,x))\sigma_{k}(x,u(s,x))\phi(x)dxd\beta_{k}(s),\ \ \ \P\text{-a.s.} }.
		\end{array}
	\end{equation}
\end{prop}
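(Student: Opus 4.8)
\emph{Proof proposal.} The plan is to derive \eqref{gen_ito} from the weak formulation \eqref{weak_test} by a spatial regularization that turns $u$ into a process which, for each fixed point $x$, is a genuine real-valued It\^o process; I can then apply the classical finite-dimensional It\^o formula pointwise in $x$, test against $\phi$, and pass to the limit. Because $\phi\in C^1_c(\O)$, the whole argument takes place on a compact set $K\subset\subset\O$ containing $\mathrm{supp}\,\phi$, so no boundary contributions ever arise and the procedure is essentially the one carried out on $\mathbb{T}^d$ in \cite{debussche16}, localized by $\phi$.

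\textbf{Step 1 (regularization).} Fix a standard mollifier $\rho_\varepsilon$ on $\mathbb{R}^d$ and set $u_\varepsilon(t,x):=\langle u(t),\rho_\varepsilon(x-\cdot)\rangle_H$ for $x$ with $\mathrm{dist}(x,\partial\O)>\varepsilon$; for $\varepsilon$ small this is defined on a neighbourhood of $K$. Testing \eqref{weak_test} with $\varphi=\rho_\varepsilon(x-\cdot)\in C^\infty_0(\O)$ and integrating by parts in the $y$-variable, I obtain that, for each such $x$, $t\mapsto u_\varepsilon(t,x)$ is a continuous real semimartingale
\[ u_\varepsilon(t,x)=u_{0,\varepsilon}(x)+\int_0^t \mathrm{div}_x\, g_\varepsilon(s,x)\,ds+\sum_{k}\int_0^t \sigma_{k,\varepsilon}(s,x)\,d\beta_k(s),\qquad \P\text{-a.s.}, \]
where $g_\varepsilon:=(\nabla\mathfrak{b}(u))*\rho_\varepsilon$ and $\sigma_{k,\varepsilon}:=\sigma_k(\cdot,u)*\rho_\varepsilon$. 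The drift has absolutely continuous paths, since $\|\mathrm{div}_x g_\varepsilon(s)\|_\infty\le C_\varepsilon\|\nabla\mathfrak{b}(u(s))\|_H\in L^2(0,T)$, and the martingale part has quadratic variation density $\sum_k|\sigma_{k,\varepsilon}(s,x)|^2$, which is summable by Hypothesis \ref{Hypothesis1}. Hence the classical It\^o formula applies to $\psi(u_\varepsilon(t,x))$ at each fixed $x$.

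\textbf{Step 2 (pointwise It\^o and testing).} Multiplying the resulting identity by $\phi(x)$, integrating over $\O$, applying the stochastic Fubini theorem to the martingale term and integrating by parts in $x$ in the drift (legitimate because $\phi$ and $\psi'(u_\varepsilon)$ are $C^1$ in $x$ on $K$ and $\phi$ vanishes near $\partial K$), I reach the approximate identity
\[ \int_\O \psi(u_\varepsilon(t))\phi\,dx=\int_\O\psi(u_{0,\varepsilon})\phi\,dx-\int_0^t\!\!\int_\O g_\varepsilon\cdot\nabla\big(\psi'(u_\varepsilon)\phi\big)\,dx\,ds+\tfrac12\int_0^t\!\!\int_\O\psi''(u_\varepsilon)\Big(\textstyle\sum_k|\sigma_{k,\varepsilon}|^2\Big)\phi\,dx\,ds+\sum_k\int_0^t\!\!\int_\O\psi'(u_\varepsilon)\sigma_{k,\varepsilon}\phi\,dx\,d\beta_k(s). \]

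\textbf{Step 3 (limit $\varepsilon\to0$).} It then remains to pass to the limit term by term, using that $u_\varepsilon\to u$, $\nabla u_\varepsilon\to\nabla u$ and $g_\varepsilon\to\nabla\mathfrak{b}(u)$ strongly in $L^2(K\times[0,T])$ (interior mollification), and that $u_\varepsilon\to u$ a.e.\ along a subsequence. Writing $\nabla(\psi'(u_\varepsilon)\phi)=\psi''(u_\varepsilon)\nabla u_\varepsilon\,\phi+\psi'(u_\varepsilon)\nabla\phi$ and using $\psi''\in L^\infty$, dominated convergence gives $\psi''(u_\varepsilon)\nabla u_\varepsilon\to\psi''(u)\nabla u$ in $L^2$, whence the drift term converges to $-\int_0^t\int_\O\nabla\mathfrak{b}(u)\cdot\nabla(\psi'(u)\phi)\,dx\,ds$; the same bound on $\psi''$ together with $\sum_k|\sigma_{k,\varepsilon}|^2\to\Sigma^2(\cdot,u)$ in $L^1$ handles the It\^o correction, and the martingale term converges in $L^2(\Omega)$ by the It\^o isometry and the growth bounds \eqref{sgfine}--\eqref{sgfine2}. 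The linear and quadratic growth of $\psi'$ and $\psi$ (controlled by $\|\psi''\|_\infty$) together with $u\in L^\infty([0,T];L^p)$, $p\ge2$, guarantee all the integrability needed. \textbf{The main obstacle} is the drift term: it is the only place where the regularity $\mathfrak{b}(u),u\in H^1$ is genuinely used, and the key observation is that having \emph{both} gradients in $L^2$ lets me avoid any DiPerna--Lions commutator estimate and instead pass to the limit directly from the strong $L^2$ convergence of the mollifications and the boundedness of $\psi''$.
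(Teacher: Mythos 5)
Your proposal is correct and follows essentially the same route as the paper, which gives no proof of its own but defers to the generalized It\^o formula of \cite{debussche16}: that proof is precisely the spatial mollification argument you describe (pointwise classical It\^o for $u_\varepsilon$, integration against the test function, and passage to the limit using the $H^1$ regularity of $u$ and $\mathfrak{b}(u)$), and your localization by $\phi\in C^1_c(\O)$ is the only adaptation needed on the bounded domain. The one point stated slightly too strongly is the $L^2(\Omega)$ convergence of the martingale term, which for $p=2$ only has the moments for convergence in probability, but that suffices for the $\P$-a.s.\ identity.
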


\begin{Remark}\label{remark_gen_ito}
In particular, if $\psi'(u)\in L^{2}(\Omega\times[0,T];H^{1})$, then by standard Sobolev spaces arguments (see, e.g., \cite[Section 5.5]{evans2010}), the test function $\phi$ in \eqref{gen_ito} can be chosen to be in $C^{1}(\overline{\O})$.
\end{Remark}

Next, we introduce the definitions of kinetic measure and kinetic solution to \eqref{limit_para} (see \cite{gess18} and \cite{fridnew}):
\begin{Definition}[Kinetic measure] \label{def_kinetic_measure}
	A mapping $m$ from $\Omega$ to $\mathcal{M}^{+}_{b}([0,T]\times\mathcal{O}\times\mathbb{R})$, the set of nonnegative bounded measure over $[0,T]\times\mathcal{O}\times\mathbb{R}$, is said to be a kinetic measure provided:
	\begin{enumerate}
		\item $m$ is measurable in the sense that, for each $\psi\in C_{c}([0,T]\times\O\times\mathbb{R})$ the mapping $m(\psi):\Omega\to\mathbb{R}$ is measurable.
		
		\smallskip
		
		\item For all $\psi\in C_{c}(\mathcal{O}\times\mathbb{R})$, the process
		\begin{equation*}
			\int_{[0,t]\times\mathcal{O}\times\mathbb{R}}\psi(x,\xi)dm(s,x,\xi)\in L^{2}(\Omega\times[0,T])
		\end{equation*}
		is predictable.
		
		\smallskip
		
		\item $m$ vanishes for large $\xi$ in the sense that, if $B_{R}^{c}=\{\xi\in\mathbb{R}:\abs{\xi}\geq R\}$, then
		\begin{equation*}
			\lim_{R\to\infty}\E m([0,T]\times\O\times B_{R}^{c})=0.
		\end{equation*}
	\end{enumerate}
\end{Definition}

\begin{Definition}[Kinetic solution]\label{def_kinetic_solution}
	 Let $u_{0}\in L^{p}(\mathcal{O})$, for some $p\geq2$. Assume that
	\begin{equation}
		u\in L^{p}(\Omega\times[0,T];L^{p}(\mathcal{O}))\bigcap L^{p}(\Omega;L^{\infty}([0,T];L^{p}(\O)))
	\end{equation}
	satisfies the following conditions:
	\begin{enumerate}
		\item $\text{div}\int_{0}^{u}\sqrt{b(\xi)}d\xi \in L^{2}(\Omega\times[0,T]\times{\mathcal{O}})$.
		
		\smallskip
		
		\item For all $\phi\in C_{b}(\mathbb{R})$, the following chain rule formula holds true:
		\begin{equation}\label{chain_rule_kin}
			\text{div}\int_{0}^{u}\sqrt{b(\xi)}\phi(\xi)d\xi=\phi(u)\cdot\text{div}\int_{0}^{u}\sqrt{b(\xi)}d\xi\ \ \ \text{in}\ \ \ \mathcal{D}'(\overline{\O})\ \text{a.e.}\ (\omega,t).
		\end{equation}	
	\end{enumerate}
	Let  $n_{1}:\Omega\to\mathcal{M}^{+}_{b}([0,T]\times\mathcal{O}\times\mathbb{R})$ be defined as follows: for all $\varphi\in C_{c}([0,T]\times\mathcal{O}\times\mathbb{R})$,
	\begin{equation}\label{kinetic_measure}
		n_{1}(\varphi)=\int_{0}^{T}\int_{\mathcal{O}}\int_{\mathbb{R}}\varphi(t,x,\xi)\Big\lvert\nabla_{x}\int_{0}^{u}\sqrt{b(r)}dr\Big\rvert^{2}d\delta_{u(t,x)=\xi}(\xi) dxdt.
	\end{equation}
	Then $u$ is called a kinetic solution of equation \eqref{limit_para} with initial datum $u_{0}$ provided there exists a kinetic measure $m\geq n_{1}$, $\P$-a.s., such that the pair $\big(h=\mathbf{1}_{u>\xi},m\big)$ satisfies, for all $\varphi\in C^{\infty}_{c}([0,T)\times\mathcal{O}\times\mathbb{R})$, $\P$-a.s.,
	\begin{equation}\label{kinetic_cond}
		\begin{array}{ll}
			&\ds{-\int_{0}^{T}\dInner{h(t),\partial_{t}\varphi(t)}dt=\dInner{h(0),\varphi(0)}+\int_{0}^{T}\dInner{h(t),b(\xi)\Delta_{x}\varphi(t)}dt}\\
			\vs
			&\ds{\quad\quad\quad  +\sum_{k=1}^{\infty}\int_{0}^{T}\int_{\mathcal{O}}\varphi(t,x,u(t,x))\sigma_{k}(x,u(t,x))dxd\beta_{k}(t)}\\
			\vs
			&\ds{\quad\quad\quad +\frac{1}{2}\int_{0}^{T}\int_{\mathcal{O}}\partial_{\xi}\varphi(t,x,u(t,x))\Sigma^{2}(x,u(t,x))dxdt-m(\partial_{\xi}\varphi) },
		\end{array}
	\end{equation}
	where $\dInner{\cdot,\cdot}$ is the duality between the space of distribution over $\mathcal{O}\times\mathbb{R}$ and $C_{c}^{\infty}(\mathcal{O}\times\mathbb{R})$.
\end{Definition}

\begin{Remark}
	We emphasize that a kinetic solution is a class of equivalence in $L^{p}(\Omega\times[0,T];L^{p}(\O))$ so not necessarily a stochastic process in the usual sense. Nevertheless, it will be seen later in Lemma \ref{continuity_Lp} that, in this class of equivalence, there exists a representative which has almost surely continuous trajectories in $L^{p}(\O)$ and, therefore, it can be regarded as a stochastic process.
\end{Remark}

\begin{Remark}\label{remark_weak_solution}
	By $h=\mathbf{1}_{u>\xi}$ we understand a real funtion of four variables. In the stochastic case, the equation \eqref{kinetic_cond} is a weak form of the so-called kinetic formulation of \eqref{limit_para}, which can be formally written as
	\begin{equation}
		\partial_{t}\mathbf{1}_{u>\xi}-b(\xi)\Delta_{x}\mathbf{1}_{u>\xi}=\sum_{i=1}^{\infty}\delta_{u=\xi}\sigma_{i}(x,\xi)d\beta_{i}+\partial_{\xi}\Big(m-\frac{1}{2}\delta_{u=\xi}\Sigma^{2}(x,\xi)\Big)
	\end{equation}
	in the sense of distributions over $[0,T)\times\O\times\mathbb{R}$, where we denote by $\delta_{u=\xi}$ the Dirac measure centered at $u(t,x)$. 	This choice is reasonable since for any $u$ being a weak solution to equation \eqref{limit_para} that belongs to $ L^{p}(\Omega;L^{\infty}([0,T];L^{p}(\O))) \bigcap L^{2}(\Omega;L^{2}([0,T];H^{1}))$, by using \eqref{gen_ito} it can be easily verified that, the pair $\big(h=\mathbf{1}_{u>\xi},n_{1}\big)$ satisfies \eqref{kinetic_cond}, and consequently $u$ is also a kinetic solution. 
\end{Remark}

\begin{Remark}
	We should also mention that, if we let $u$ be a kinetic solution to \eqref{limit_para} and consider $h=\mathbf{1}_{u>\xi}$, then $\nu:=\delta_{u=\xi}=-\partial_{\xi}h$ is a Young measure on $\Omega\times[0,T]\times\O$ (see \cite[Section 2.2]{debussche16}). Therefore \eqref{kinetic_cond} can also be rewritten as follows: for all $\varphi\in C^{\infty}_{c}([0,T)\times\O\times\mathbb{R})$, $\P$-a.s.,
	\begin{equation*}
		\begin{array}{ll}
			&\ds{-\int_{0}^{T}\dInner{h(t),\partial_{t}\varphi(t)}dt=\dInner{h(0),\varphi(0)}+\int_{0}^{T}\dInner{h(t),b(\xi)\Delta_{x}\varphi(t)}dt}\\
			\vs
			&\ds{\quad\quad\quad  +\sum_{k=1}^{\infty}\int_{0}^{T}\int_{\mathcal{O}}\int_{\mathbb{R}}\varphi(t,x,\xi)\sigma_{k}(x,\xi)dxd\nu_{t,x}(\xi)d\beta_{k}(t)}\\
			\vs
			&\ds{\quad\quad\quad +\frac{1}{2}\int_{0}^{T}\int_{\mathcal{O}}\int_{\mathbb{R}}\partial_{\xi}\varphi(t,x,\xi)\Sigma^{2}(x,\xi)d\nu_{t,x}(\xi)dxdt-m(\partial_{\xi}\varphi) }.
		\end{array}
	\end{equation*}
\end{Remark}

\subsection{Generalized kinetic solution}

Now let us introduce the definition of generalized kinetic solution as well as the related definitions. It is a generalization of the definition of kinetic solution in Definition \ref{def_kinetic_solution}, which is suited for establishing the well-posedness of \eqref{limit_para} in $L^{1}(\O)$.

\begin{Definition}[Generalized kinetic measure] \label{def_gen_kinetic_measure}
	 A mapping $m$ from $\Omega$ to $\mathcal{M}^{+}([0,T]\times\mathcal{O}\times\mathbb{R})$, the set of nonnegative Radon measure over $[0,T]\times\mathcal{O}\times\mathbb{R}$, is said to be a generalized kinetic measure provided:
	\begin{enumerate}
		\item For all $\psi\in C_{c}([0,T]\times\mathcal{O}\times\mathbb{R})$, the process
		\begin{equation*}
			\int_{[0,t]\times\mathcal{O}\times\mathbb{R}}\psi(s,x,\xi)dm(s,x,\xi)\in L^{2}(\Omega\times[0,T])
		\end{equation*}
		is predictable.
		
		\smallskip
		
		\item $m$ vanishes for large $\xi$ in the sense that
		\begin{equation*}
			\lim_{n\to+\infty}\frac{1}{2^{l}}\E m(A_{2^{l}})=0,
		\end{equation*}
		where $A_{2^{l}}=[0,T]\times\mathcal{O}\times\big\{\xi\in\mathbb{R}:2^{l}\leq\abs{\xi}\leq 2^{l+1}\big\}$.
	\end{enumerate}
\end{Definition}

\bigskip

\begin{Definition}[Generalized kinetic solution] \label{def_gen_kinetic_solution}
	 Let $u_{0}\in L^{1}(\mathcal{O})$. Assume that $u\in L^{1}(\Omega\times[0,T];L^{1}(\mathcal{O}))$ satisfies 
	\begin{equation}
		\E\esssup_{t\in[0,T]}\norm{u(t)}_{L^{1}(\O)}^{p}\leq C_{T,p},\ \ \ \forall p\geq 1,
	\end{equation}
	and the following conditions:
	\begin{enumerate}
		\item For all $\phi\in C_{c}(\mathbb{R})$, $\phi\geq0$,
		\begin{equation}\label{div_L2}
			\text{div}\int_{0}^{u}\sqrt{b(\xi)}\phi(\xi)d\xi \in L^{2}(\Omega\times[0,T]\times{\mathcal{O}}).
		\end{equation}
		
		\item For all $\phi_{1},\phi_{2}\in C_{c}(\mathbb{R})$, $\phi_{1},\phi_{2}\geq0$, the following chain rule formula holds true in $L^{2}(\Omega\times[0,T]\times\mathcal{O})$
		\begin{equation}\label{chain_rule}
			\text{div}\int_{0}^{u}\sqrt{b(\xi)}\phi_{1}(\xi)\phi_{2}(\xi)d\xi=\phi_{1}(u)\cdot\text{div}\int_{0}^{u}\sqrt{b(\xi)}\phi_{2}(\xi)d\xi.
		\end{equation}
		\end{enumerate}
		Let $\phi\in C^{\infty}_{c}(\mathbb{R})$, $\phi\geq0$, and let $n^{\phi}:\Omega\to\mathcal{M}^{+}([0,T]\times\mathcal{O}\times\mathbb{R})$ be defined as follows: for all $\varphi\in C^{\infty}_{c}([0,T]\times\mathcal{O})$,
		\begin{equation}\label{gen_kinetic_measure}
			n^{\phi}(\varphi)=\int_{0}^{T}\int_{\mathcal{O}}\varphi(t,x)\Big\lvert\nabla_{x}\int_{0}^{u}\sqrt{\phi(r)b(r)}dr\Big\rvert^{2} dxdt.
		\end{equation}
		Then $u$ is called a generalized kinetic solution of equation \eqref{limit_para} with initial datum $u_{0}$ provided there exists a generalized kinetic measure $m$ such that, for all $\varphi\in C^{\infty}_{c}([0,T]\times\mathcal{O})$ and $\phi\in C^{\infty}_{c}(\mathbb{R})$, $\varphi,\phi\geq0$, it holds $m(\varphi\phi)\geq n^{\phi}(\varphi)$, $\P$-a.s., and, in addition, the pair $\big(h=\mathbf{1}_{u>\xi},m\big)$ satisfies equation \eqref{kinetic_cond} for all $\varphi\in C^{\infty}_{c}([0,T)\times\mathcal{O}\times\mathbb{R})$.
\end{Definition}

\begin{Remark}
	Note that the generalized kinetic measure $m$ is generally not a finite measure and the decay assumption from Definition \ref{def_gen_kinetic_measure} is weaker than the one in Definition \ref{def_kinetic_measure}. The chain rule \eqref{chain_rule} in Definition \ref{def_gen_kinetic_solution} is weaker than the corresponding version \eqref{chain_rule_kin} in Definition \ref{def_kinetic_solution}, so the parabolic dissipation $n^{\phi}$ does not necessarily define a measure with respect to the variable $\xi$. 
\end{Remark}

\begin{Remark}
	Let $u\in L^{p}(\Omega;L^{p}([0,T]\times\O))$ for some $p\geq2$. Then $u$ is a generalized kinetic solution to \eqref{limit_para} if and only if $u$ is a kinetic solution.
\end{Remark}

\begin{Remark}
	We also emphasize that a generalized kinetic solution is a class of equivalence in $L^{1}(\Omega\times[0,T];L^{1}(\O))$.
\end{Remark}

\subsection{Statement of the main result}

We are now ready to state our main result of this paper.

\begin{thm}\label{main_thm} Assume Hypotheses \ref{Hypothesis1} and \ref{Hypothesis2}. If $u_{1},u_{2}$ are generalized kinetic solutions to \eqref{limit_para} with initial data $u_{1,0},u_{2,0}\in L^{1}(\O)$, respectively, then it holds that
	\begin{equation}\label{L1_contraction_kinetic}
		\esssup_{t\in[0,T]}\E\norm{(u_{1}(t)-u_{2}(t))^{+}}_{L^{1}(\mathcal{O})}\leq \norm{(u_{1,0}-u_{2,0})^{+}}_{L^{1}(\mathcal{O})}.
	\end{equation}
	Assume in addition that either Hypothesis \ref{Hypothesis3}\ref{nondeg_condition} or \ref{Hypothesis3}\ref{deg_condition} holds. Then for every $u_{0}\in L^{1}(\mathcal{O})$ and every $T>0$, there exists a unique generalized kinetic solution $u\in C([0,T];L^{1}(\mathcal{O}))$, $\P$-a.s. to equation \eqref{limit_para}. Moreover, for every $p\geq1$, there exists $C_{T,p}>0$ such that
	\begin{equation}\label{energy_gen_kinetic_L1}
		\E\ \esssup_{t\in[0,T]}\norm{u(t)}_{L^{1}(\mathcal{O})}^{p}\leq C_{T,p}\Big(1+\norm{u_{0}}_{L^{1}(\mathcal{O})}^{p}\Big).
	\end{equation}
	
\end{thm}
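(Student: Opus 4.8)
The plan is to establish the two assertions in turn: first derive the $L^{1}$-contraction \eqref{L1_contraction_kinetic} from the localized doubling-of-variables estimate of Lemma \ref{contraction}, and then combine it with the $L^{p}$ well-posedness of Section \ref{sec_wellposedness_Lp} to obtain existence, uniqueness and the energy bound \eqref{energy_gen_kinetic_L1} for $L^{1}$ initial data.

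For the contraction I would work with the kinetic functions $f_{1}=\mathbf{1}_{u_{1}>\xi}$ and $\bar{f}_{2}=1-\mathbf{1}_{u_{2}>\xi}$, exploiting the elementary identity $\int_{\mathbb{R}}f_{1}(x,\xi)\bar{f}_{2}(x,\xi)\,d\xi=(u_{1}(x)-u_{2}(x))^{+}$, so that the left-hand side of \eqref{L1_contraction_kinetic} is the expectation of $\int_{\mathcal{O}}\int_{\mathbb{R}}f_{1}\bar{f}_{2}\,d\xi\,dx$. The idea is to double the spatial and velocity variables: test the kinetic formulation \eqref{kinetic_cond} for $u_{1}$ against test functions in $(x,\xi)$ and the one for $u_{2}$ against test functions in $(y,\zeta)$, form the product via the It{\^o} formula, and integrate against an approximate identity $\rho_{\epsilon}(x-y)\,\varrho_{\epsilon}(\xi-\zeta)$ together with a boundary-layer cutoff $\zeta_{\delta}$; this is precisely the content of Lemma \ref{contraction}. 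In the resulting identity the martingale part has zero expectation, the noise-correlation term produced by the cross quadratic variation combines with the two It{\^o} corrections into a quantity controlled by the Lipschitz bound \eqref{sgfine1} that vanishes in the diagonal limit $\epsilon\to0$, and the parabolic terms $b(\xi)\Delta_{x}$ pair with the dissipation measures $m_{1}\geq n^{\phi}_{1}$, $m_{2}\geq n^{\phi}_{2}$ so that, after the Carrillo-type manipulation afforded by the weak chain rule \eqref{chain_rule}, their net contribution carries the correct non-positive sign.

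The boundary analysis is where the localized version is indispensable and is, I expect, the main obstacle. Since we work under homogeneous Dirichlet conditions rather than on the torus, the cutoff $\zeta_{\delta}$ generates extra terms involving $\nabla\zeta_{\delta}$. These are handled using two structural facts: the noise coefficients vanish on $\partial\mathcal{O}$ by \eqref{sigma_boundary}, so the stochastic contributions concentrated near the boundary disappear, and the boundary-layer property \eqref{boundary_laryer_property} together with the sign $\Delta\zeta_{\delta}\leq0$ of the example boundary-layer sequence prevents the parabolic boundary term from acquiring the wrong sign. Sending $\epsilon\to0$ to reach the diagonal and then $\delta\to0$ to remove the cutoff yields \eqref{L1_contraction_kinetic}.

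With the contraction available, uniqueness is immediate, since $u_{1,0}=u_{2,0}$ forces both $(u_{1}-u_{2})^{+}$ and $(u_{2}-u_{1})^{+}$ to vanish. For existence I would approximate $u_{0}\in L^{1}(\mathcal{O})$ by its truncations $u_{0}^{n}$ at level $\pm n$, which lie in $L^{p}(\mathcal{O})$ for the relevant exponent ($p=2\theta$ under Hypothesis \ref{Hypothesis3}\ref{nondeg_condition} and $p=2\theta_{2}$ under Hypothesis \ref{Hypothesis3}\ref{deg_condition}), and invoke the $L^{p}$ well-posedness of Section \ref{sec_wellposedness_Lp} to obtain solutions $u^{n}$. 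Applying \eqref{L1_contraction_kinetic} to the pairs $(u^{n},u^{m})$ shows that $(u^{n})$ is Cauchy in $L^{1}(\Omega;L^{\infty}([0,T];L^{1}(\mathcal{O})))$, and its limit $u$ is the sought generalized kinetic solution: the kinetic formulation \eqref{kinetic_cond} is stable under this passage to the limit, with the dissipation measures $n^{\phi}$ converging to a generalized kinetic measure $m$ by lower semicontinuity. The energy estimate \eqref{energy_gen_kinetic_L1} is obtained by passing to the limit in the corresponding $L^{p}$ energy bounds, and almost-sure continuity of the trajectories in $L^{1}(\mathcal{O})$ is inherited from the uniform-in-time $L^{1}$ convergence of the continuous approximants $u^{n}$.
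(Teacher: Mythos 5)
Your overall route coincides with the paper's: the contraction via the localized doubling of variables (Lemma \ref{contraction}) combined with interior and boundary estimates using the boundary-layer sequence and the sign $\Delta\zeta_{\delta}\leq0$, then uniqueness from the contraction, and existence by approximating $u_{0}$ in $L^{1}(\O)$ by $L^{p}$ data and passing to the limit using the $L^{p}$ well-posedness of Section \ref{sec_wellposedness_Lp}. Two small remarks on the first part: in the $L^{1}$ setting the doubling lemma also requires the truncation $K_{l}$ in the velocity variable (the generalized kinetic measure is only a Radon measure with the weak decay of Definition \ref{def_gen_kinetic_measure}, and $u$ is only $L^{1}$, so the limits $\epsilon,\delta\to0$ must be taken before $l\to\infty$); and the boundary term is killed not by \eqref{sigma_boundary} but by the fact that $\Psi_{l}=\int_{u_{2}}^{u_{1}}b(\xi)K_{l}(\xi)\,d\xi$ vanishes on $\partial\O$ together with \eqref{boundary_laryer_property}.

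The genuine gap is in the last step. The contraction \eqref{L1_contraction_kinetic} controls
\begin{equation*}
\esssup_{t\in[0,T]}\ \E\norm{u^{n}(t)-u^{m}(t)}_{L^{1}(\O)},
\end{equation*}
so the approximating sequence is Cauchy in $L^{\infty}([0,T];L^{1}(\Omega;L^{1}(\O)))$, \emph{not} in $L^{1}(\Omega;L^{\infty}([0,T];L^{1}(\O)))$ as you assert: the quantity $\E\sup_{t}\norm{\cdot}$ is not dominated by $\sup_{t}\E\norm{\cdot}$. Consequently you cannot conclude that the trajectories of the limit are $\P$-a.s.\ continuous in $L^{1}(\O)$ "by inheritance" from the continuous approximants, since the convergence is not pathwise uniform in time. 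The paper proves continuity separately: the decay estimates on the generalized kinetic measure and on $\norm{(u(t)\mp2^{n})^{\pm}}_{L^{1}}$ in Proposition \ref{decay} (whose proof in turn rests on Lemmas \ref{kinetic_measure_bound} and \ref{kinetic_measure_bound_2}) yield Corollary \ref{continuity_L1}, following the argument of Gess and Hofmanov\'a. Relatedly, the construction of the limiting generalized kinetic measure requires the uniform bound $\sup_{\kappa}\E\abs{m^{\kappa}([0,T]\times\O\times[-k,k])}^{p}\leq C(k,p)$ of Lemma \ref{kinetic_measure_bound} to extract a weak$^{\ast}$ limit on each compact $\xi$-slab; "lower semicontinuity" alone does not produce the measure, and the verification of \eqref{div_L2} needs the quantitative bound of Lemma \ref{kinetic_measure_bound_2}. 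These estimates are a substantive part of the proof that your proposal omits.
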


\begin{Remark}\label{main_remark} Under Hypotheses \ref{Hypothesis1} and \ref{Hypothesis3}\ref{nondeg_condition}, adding a drift term $f(u)$ on the right-hand side of equation \eqref{limit_para} will not cause any additional difficulties in our proof for the existence of a generalized kinetic solution, provided that $f\in C^{1}(\mathbb{R})$ is non-increasing, Lipschitz continuous and satisfies 
		\begin{equation*}
			\liminf_{r\to+\infty}f(r)\leq 0.
		\end{equation*}
\end{Remark}

To study the ergodic behavior of \eqref{limit_para}, let us define $P_{t}$ the transition semigroup associated to equation \eqref{limit_para} by 
 \begin{equation*}
 	P_{t}\varphi(z):=\E\varphi(u^{z}(t)),\ \ \ t\geq0,\ \ \ z\in L^{1}(\mathcal{O}),
 \end{equation*}
 for every $\varphi\in B_{b}(L^{1}(\mathcal{O}))$, where $u^{z}$ is the unique generalized kinetic solution with initial datum $z$.
 
 \begin{thm}
 	\label{ergodic_thm}
 	Assume Hypothesis \ref{Hypothesis1} and \ref{Hypothesis3}\ref{nondeg_condition}. If we assume that there exist some $c>0$, $\lambda<\sqrt{2\alpha_{1}b_{0}}$ and $\alpha\in(0,1)$ such that
 	\begin{equation}
 		\label{polynomial_sigma}
 		\norm{\sigma(h)}_{L_{2}(\mathcal{U},H)}\leq \lambda\norm{h}_{H}+c\big(1+\norm{h}_{H}^{\alpha}\big),\ \ \ \ h\in H,
 	\end{equation} 
 	then the transition semigroup $P_{t}$, $t\geq0$, admits at least one invariant measure $\nu$ in $L^{1}(\O)$, with support in $H^{1}$. Assume in addition that the noise in \eqref{limit_para} is additive and the diffusion $b$ is bounded. Then the invariant measure of $P_{t}$ is unique.  
 \end{thm}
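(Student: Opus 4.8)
The plan is to treat the two assertions by different mechanisms: existence by the Krylov--Bogoliubov method, exploiting the dissipativity encoded in $\lambda<\sqrt{2\alpha_{1}b_{0}}$, and uniqueness by upgrading the $L^{1}$-contraction to a pathwise statement and proving asymptotic synchronization. For existence I would first establish a uniform-in-time energy estimate for the solution $u^{z}$ started at $z\in H^{1}$. Applying the generalized It\^o formula \eqref{gen_ito} with $\psi(r)=r^{2}$ and $\phi\equiv1$ (legitimate by Remark \ref{remark_gen_ito}, since $\psi'(u)=2u\in L^{2}(\Omega\times[0,T];H^{1})$ under Hypothesis \ref{Hypothesis3}\ref{nondeg_condition}) and taking expectations, the diffusion term yields $-2\,\E\int_{\O}b(u)\abs{\nabla u}^{2}dx\leq-2b_{0}\,\E\norm{u}_{H^{1}}^{2}$, while the noise term contributes $\E\norm{\sigma(u)}_{L_{2}(\mathcal{U},H)}^{2}$. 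Bounding the latter by \eqref{polynomial_sigma}, using Young's inequality to absorb the subquadratic contributions and the Poincar\'e inequality $\alpha_{1}\norm{u}_{H}^{2}\leq\norm{u}_{H^{1}}^{2}$ to trade a small part of the dissipation against $\norm{u}_{H}^{2}$, the hypothesis $\lambda<\sqrt{2\alpha_{1}b_{0}}$ gives, for suitable $\epsilon,\kappa,C>0$ independent of $t$,
\begin{equation*}
	\frac{d}{dt}\,\E\norm{u^{z}(t)}_{H}^{2}+\epsilon\,\E\norm{u^{z}(t)}_{H^{1}}^{2}+\kappa\,\E\norm{u^{z}(t)}_{H}^{2}\leq C.
\end{equation*}
By Gronwall this bounds $\sup_{t\geq0}\E\norm{u^{z}(t)}_{H}^{2}$, and after integrating in time and dropping the $\norm{u}_{H}^{2}$ term it yields $\sup_{T\geq1}\frac{1}{T}\int_{0}^{T}\E\norm{u^{z}(s)}_{H^{1}}^{2}ds<\infty$.

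Next I would form the Krylov--Bogoliubov averages $\mu_{T}^{z}$ on $L^{1}(\O)$, determined by $\int\varphi\,d\mu_{T}^{z}=\frac{1}{T}\int_{0}^{T}P_{s}\varphi(z)\,ds$. Since $H^{1}=W^{1,2}_{0}(\O)$ embeds compactly into $L^{1}(\O)$ on the bounded domain, the time-averaged $H^{1}$-bound above and Chebyshev's inequality show that $\{\mu_{T}^{z}\}_{T\geq1}$ is tight; the $L^{1}$-contraction \eqref{L1_contraction_kinetic} gives the Feller property of $P_{t}$ on $L^{1}(\O)$, so any weak limit $\nu$ of a subsequence is an invariant measure. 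Lower semicontinuity of $u\mapsto\norm{u}_{H^{1}}^{2}$ (set to $+\infty$ off $H^{1}$) with respect to $L^{1}$-convergence, together with the uniform bound, forces $\int\norm{u}_{H^{1}}^{2}\,d\nu<\infty$, so $\nu$ is supported in $H^{1}$.

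For uniqueness, when the noise is additive the stochastic integral cancels in the difference $v=u_{1}-u_{2}$ of two solutions driven by the same $w$, so $v$ solves a pathwise (time-inhomogeneous) parabolic equation and the contraction \eqref{contraction_intr} upgrades to the almost sure statement that $t\mapsto\norm{u_{1}(t)-u_{2}(t)}_{L^{1}(\O)}$ is nonincreasing. Granting the asymptotic synchronization $\norm{u_{1}(t)-u_{2}(t)}_{L^{1}(\O)}\to0$ as $t\to\infty$, $\P$-a.s.\ (Theorem \ref{uniqueness_inv}), uniqueness follows by a coupling argument: given invariant measures $\nu_{1},\nu_{2}$, take $u_{1,0}\sim\nu_{1}$, $u_{2,0}\sim\nu_{2}$ and run both with the same noise; dominated convergence (the contraction dominates by the integrable $\norm{u_{1,0}-u_{2,0}}_{L^{1}(\O)}$) gives $\E\norm{u_{1}(t)-u_{2}(t)}_{L^{1}(\O)}\to0$, whence $\abs{\int\varphi\,d\nu_{1}-\int\varphi\,d\nu_{2}}=\abs{\E\varphi(u_{1}(t))-\E\varphi(u_{2}(t))}\to0$ for every Lipschitz $\varphi$, forcing $\nu_{1}=\nu_{2}$.

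The main obstacle is the synchronization step. The a.s.\ nonincreasing quantity $\norm{u_{1}(t)-u_{2}(t)}_{L^{1}(\O)}$ converges to some $\ell\geq0$, and one must exclude $\ell>0$. Following the coupling and smallness-of-noise scheme of \cite{debussche15,chen2019} (and the classical idea of \cite{doeblin1937}), I expect to use the uniform parabolicity furnished by boundedness of $b$ together with Hypothesis \ref{Hypothesis3}\ref{nondeg_condition} to produce a quantitative strict decrease of the $L^{1}$ distance on time windows where the noise increment is small, and then to combine the positive probability of such windows with the a.s.\ monotonicity (via a Borel--Cantelli argument) to force $\ell=0$. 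Making this strict-decrease estimate quantitative and uniform across the random, a priori unbounded, configurations $u_{1},u_{2}$ is the delicate point.
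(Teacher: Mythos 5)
Your existence argument is essentially the paper's: the same It\^o/dissipativity estimate using $\lambda<\sqrt{2\alpha_{1}b_{0}}$ and Poincar\'e, the same time-averaged $H^{1}$ bound, Chebyshev plus the compact embedding $H^{1}\hookrightarrow L^{1}(\O)$ for tightness of the Krylov--Bogoliubov averages, the Feller property from the $L^{1}$-contraction, and lower semicontinuity for the support statement. That half is fine.

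The uniqueness half has a genuine gap, and moreover the mechanism you expect to close it is not the one that works. First, you ``grant'' the asymptotic synchronization $\norm{u_{1}(t)-u_{2}(t)}_{L^{1}(\O)}\to0$ a.s.\ (Theorem \ref{uniqueness_inv}), but that is precisely the content that must be proved; the coupling deduction from it is routine. Second, your proposed route --- a \emph{quantitative strict decrease of the $L^{1}$ distance} on windows of small noise, via uniform parabolicity --- is problematic: the difference $v=u_{1}-u_{2}$ does not satisfy a clean uniformly parabolic equation (the coefficients $b(u_{1})$ and $b(u_{2})$ differ), and no strict $L^{1}$-contraction rate for $v$ is established or needed in the paper. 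What the paper actually does is make \emph{both solutions individually small}: subtracting the stochastic convolution $W_{A}$ reduces each $u^{z}$ to a pathwise deterministic problem whose energy estimate gives $\norm{u^{z}(t)}_{H}^{2}\leq C\big(e^{-\overline{\lambda}t}\norm{z}_{H}^{2}+\sup_{s\leq t}\norm{W_{A}(s)}_{H^{1}}^{2}\big)$ (Proposition \ref{irreducible}, Corollary \ref{control_ball}); on a window where the noise increment is small and the starting configuration lies in a fixed ball, both $u_{1}$ and $u_{2}$ are driven near $0$ in $H$, hence their $L^{1}$ distance is small, and a.s.\ monotonicity propagates this forward. Third, the ``delicate point'' you flag --- uniformity over a priori unbounded configurations --- is resolved in the paper by a separate recurrence result (Lemma \ref{finite_time}): via a martingale/Borel--Cantelli argument one shows that the stopping times at which $\norm{u_{1}(t)}_{H}^{2}+\norm{u_{2}(t)}_{H}^{2}\leq K_{0}$ (after first approximating the $L^{1}$ data by $H$ data and using the pathwise contraction) are a.s.\ finite, so the small-noise windows can be attempted infinitely often with a uniform conditional success probability $\lambda(\epsilon)>0$ by the strong Markov property. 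Without this return-to-a-ball lemma and the ``both solutions decay'' mechanism, your sketch does not close.
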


\section{Comparison principle and $L^{1}$-contraction}
\label{L1}

In this section we prove a comparison result and an $L^{1}$-contraction property for generalized kinetic solutions to equation \eqref{limit_para}.

 Analogously to \cite[Proof of Proposition 10]{debussche14}, we can prove the existence of left- and right-continuous representatives for generalized kinetic solutions (also see \cite{gess18}). 

\begin{lem}
	Let $u$ be a generalized kinetic solution of equation \eqref{limit_para}. Then $h=\mathbf{1}_{u>\xi}$ admits representatives $h^{-}$ and $h^{+}$ which are almost surely left and right continuous, respectively, at all points  $t_{\ast}\in[0,T]$ in the sense of distributions over $\mathcal{O}\times\mathbb{R}$. More precisely, for all $t_{\ast}\in [0,T]$ there exist kinetic functions $h_{\ast}^{\pm}$ on $\Omega\times\mathcal{O}\times\mathbb{R}$ such that setting $h^{\pm}(t_{\ast})=h_{\ast}^{\pm}$ yields $h^{\pm}=h$ almost everywhere and 
	\begin{equation*}
		\lim_{\epsilon\to0}\dInner{h^{\pm}(t_{\ast}\pm\epsilon),\psi}=\dInner{h^{\pm}(t_{\ast}),\psi},\ \ \ \forall\psi\in C^{2}_{c}(\mathcal{O}\times\mathbb{R}),\ \ \P\text{-a.s.},
	\end{equation*}  
	where the zero set does not depend on $\psi$ nor $t_{\ast}$. Moreover, there is a countable set $S\subset[0,T]$ such that $\P$-a.s. for all $t_{\ast}\in[0,T]\setminus S$ we have $h^{+}(t_{\ast})=h^{-}(t_{\ast})$.
\end{lem}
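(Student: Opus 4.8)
The plan is to test the kinetic identity \eqref{kinetic_cond} against products $\varphi(t,x,\xi)=\alpha(t)\psi(x,\xi)$ and thereby reduce the statement to a one-dimensional regularity fact in the time variable. Fixing $\psi\in C^{\infty}_{c}(\O\times\R)$ and letting $\alpha$ range over $C^{\infty}_{c}([0,T))$, equation \eqref{kinetic_cond} says precisely that, for a.e. $\omega$, the scalar function $g_{\psi}(t):=\dInner{h(t),\psi}$ agrees for almost every $t$ with
\[
\begin{aligned}
J_{\psi}(t):={}&\dInner{h(0),\psi}+\int_{0}^{t}\dInner{h(s),b(\xi)\Delta_{x}\psi}\,ds+\frac12\int_{0}^{t}\!\!\int_{\O}\partial_{\xi}\psi(x,u)\Sigma^{2}(x,u)\,dx\,ds\\
&+M_{\psi}(t)-m\big(\mathbf 1_{[0,t]}\partial_{\xi}\psi\big),
\end{aligned}
\]
where $M_{\psi}(t):=\sum_{k}\int_{0}^{t}\int_{\O}\psi(x,u)\sigma_{k}(x,u)\,dx\,d\beta_{k}$. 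The first three terms are continuous in $t$ (the first absolutely continuous with bounded density since $0\le h\le1$ and $\psi$ has compact support, the third controlled by \eqref{sgfine2}), $M_{\psi}$ is a continuous martingale, and the last term is the distribution function of the finite signed measure $B\mapsto\int_{B\times\O\times\R}\partial_{\xi}\psi\,dm$ on $[0,T]$, which is finite because $m$ is Radon and $\partial_{\xi}\psi$ has compact support. Hence $J_{\psi}$ is a regulated function, admitting left and right limits at every $t_{\ast}\in[0,T]$.

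The second step is to upgrade this pointwise-in-$\psi$ statement to one holding on a single null set of $\omega$, uniformly in $\psi$ and $t_{\ast}$. Here I would fix a countable family $\mathcal D\subset C^{\infty}_{c}(\O\times\R)$ dense with respect to a norm controlling $\psi$, $\Delta_{x}\psi$ and $\partial_{\xi}\psi$, discard the countable union of the null sets produced above over $\psi\in\mathcal D$, and exploit the uniform bound $\abs{\dInner{h(t),\psi}}\le\norm{\psi}_{L^{1}}$ (valid since $0\le h\le1$) together with the analogous bound on $J_{\psi}$ to pass to arbitrary $\psi\in C^{2}_{c}(\O\times\R)$ by density. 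Since $\psi\mapsto g_{\psi}(t)$ is continuous uniformly in $t$, the limits $\lim_{\epsilon\to0}\dInner{h(t_{\ast}\pm\epsilon),\psi}$ exist for every such $\psi$, with an exceptional null set depending neither on $\psi$ nor on $t_{\ast}$.

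Third, I would identify the limiting functionals $\psi\mapsto\lim_{\epsilon\to0}\dInner{h(t_{\ast}\pm\epsilon),\psi}$ with kinetic functions $h^{\pm}_{\ast}$. Because each $h(t,\cdot,\cdot)$ is $[0,1]$-valued and nonincreasing in $\xi$, these features are stable under the weak limits, and the weak-$*$ compactness of the set of kinetic functions (equivalently, of the Young measures $-\partial_{\xi}h$) guarantees the limits are again kinetic functions; setting $h^{\pm}(t_{\ast}):=h^{\pm}_{\ast}$ then produces left- and right-continuous representatives in the stated distributional sense, with $h^{\pm}=h$ a.e. by differentiating against the continuous parts of $J_{\psi}$. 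Finally, the jump set $S=\{t_{\ast}:h^{+}(t_{\ast})\neq h^{-}(t_{\ast})\}$ is governed by the atoms in time of $m$: for each $\psi$ the only discontinuities of $J_{\psi}$ come from $m(\mathbf 1_{[0,t]}\partial_{\xi}\psi)$, and the time-marginal of $m$ tested on the countable family $\mathcal D$ has at most countably many atoms, so $S$ is countable.

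The delicate point is the uniformity in the second step: reconciling the $\psi$-by-$\psi$, a.e.-in-$t$ identity $g_{\psi}=J_{\psi}$ with a single null set valid simultaneously for all $\psi$ and all $t_{\ast}$. The compact support and boundedness of $h$ make the equicontinuity estimates available, but care is needed because the generalized kinetic measure $m$ is only Radon and not bounded (Definition \ref{def_gen_kinetic_measure}), so finiteness of $m$ tested against $\partial_{\xi}\psi$ must be invoked through the compact support of $\psi$ rather than through a global bound on $m$. Checking that the weak limits remain genuine kinetic functions, rather than merely $[0,1]$-valued monotone profiles with deficient mass, is the other step requiring attention, and is where the decay property of $m$ from Definition \ref{def_gen_kinetic_measure} is used.
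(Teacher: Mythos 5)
Your argument is correct and is essentially the proof the paper relies on: the paper does not write the proof out but refers to the proof of Proposition 10 in Debussche--Vovelle \cite{debussche14}, which proceeds exactly as you describe (test \eqref{kinetic_cond} against $\alpha(t)\psi(x,\xi)$, observe that the resulting scalar function in $t$ is a sum of continuous terms and the distribution function of a finite signed measure, hence regulated; pass to a countable dense family of test functions using $0\le h\le 1$ to get a single null set; identify the weak limits as kinetic functions; and bound the jump set by the countably many temporal atoms of $m$). One small correction: the tightness needed to see that the limiting monotone profiles are genuine kinetic functions comes from the a priori bound $\E\,\esssup_{t\in[0,T]}\norm{u(t)}_{L^{1}(\O)}<\infty$ in Definition \ref{def_gen_kinetic_solution} rather than from the decay property of $m$.
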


\begin{Remark}
	For all $t\in[0,T]$ and $\psi\in C^{2}_{c}(\mathcal{O}\times\mathbb{R})$, by taking $\varphi(s,x,\xi)=\eta(s)\psi(x,\xi)$ in \eqref{kinetic_cond}, where 
	\begin{equation*}
		\eta(s)=
		\le\{\begin{array}{l}
			\ds{1,\ \ \ \ s\leq t,}\\
			\vs
			\ds{1-\frac{s-t}{\epsilon},\ \ \ \ t\leq s\leq t+\epsilon,}\\
			\vs
			\ds{0,\ \ \ \ t+\epsilon\leq s},
		\end{array}\r.
	\end{equation*}
	we obtain at the limit when $\epsilon\to0$ that $\mathbb{P}$-a.s.,
	\begin{equation}\label{kinetic_test}
		\begin{array}{ll}
			&\ds{-\dInner{h^{+}(t),\psi }+\dInner{h(0),\psi}+\int_{0}^{t}\Inner{h(s),b(\xi)\Delta_{x}\psi}ds+\sum_{i=1}^{\infty}\int_{0}^{t}\int_{\mathcal{O}}\int_{\mathbb{R}}\psi(x,\xi)\sigma_{i}(x,\xi)d\nu_{s,x}(\xi)dxd\beta_{i}(s) }\\
			\vs
			&\ds{\quad\quad\quad\quad\quad\quad\quad\quad\quad=-\frac{1}{2}\int_{0}^{t}\int_{\mathcal{O}}\int_{\mathbb{R}}\partial_{\xi}\psi(x,\xi)\Sigma^{2}(x,\xi)d\nu_{s,x}(\xi)dxds+m(\partial_{\xi}\psi)([0,t]) }.
		\end{array}
	\end{equation}
\end{Remark}

Let $\rho$ and $\psi$ be the standard mollifier on $\mathbb{R}^{d}$ and $\mathbb{R}$, and let $K\in C^{\infty}(\mathbb{R})$ be such that $0\leq K(\eta)\leq1$, $K\equiv1$ if $\abs{\eta}\leq1$, $K\equiv0$ if $\abs{\eta}\geq2$, and $\abs{K'(\eta)}\leq1$. Define 
\begin{equation}\label{mollifier}
	\rho_{\epsilon}(x)=\frac{1}{\epsilon^{d}}\rho\Big(\frac{x}{\epsilon}\Big),\ \ \ \ \psi_{\delta}(\xi)=\frac{1}{\delta}\psi\Big(\frac{\xi}{\delta}\Big),\ \ \ \ K_{l}(\eta)=K\Big(\frac{\eta}{2^{l}}\Big),
\end{equation}  
for $\epsilon,\delta>0$ and $l\in\mathbb{N}$. Then $\abs{K_{l}'(\eta)}\leq 2^{-l}\mathbf{1}_{2^{l}\leq \abs{\eta}\leq 2^{l+1}}$. Throughout this section, we use the following convention: When no integral bounds are specified, we integrate with respect to $(x,y,\xi,\zeta,\eta)
\in\O^{2}\times\mathbb{R}^{3}$. In addition, we only specify the kinetic and Young measures, but omit the Lebesgue measure (also with respect to the time variable).

The next lemma relies on the doubling of variables technique (see \cite{debussche14}, \cite{debussche16}, \cite{fridnew}). Since we are now working on a bounded domain $\mathcal{O}\subset\mathbb{R}^{d}$, it is necessary to establish a localized version which is suitable in the $L^{1}$-setting. The proof is given in Section \ref{sec_appendix}.

\begin{lem}[Doubling of variables]\label{contraction}
	Let $u_{1},u_{2}$ be generalized kinetic solutions of equation \eqref{limit_para} with initial data $u_{1,0},u_{2,0}\in L^{1}(\O)$. Denote $h_{1}=\mathbf{1}_{u_{1}>\xi}$, $h_{2}=\mathbf{1}_{u_{2}>\xi}$ with the corresponding Young measures $\nu^{1}=\delta_{u_{1}}$, $\nu^{2}=\delta_{u_{2}}$ and generalized kinetic measures $m_{1},m_{2}$, respectively. Then for all $t\in[0,T]$ and nonnegative $\phi\in C^{\infty}_{c}(\mathcal{O}^{2})$ we have
	\begin{equation}
			\E\int\Big(h_{1}^{\pm}(t)\overline{h}_{2}^{\pm}(t)-h_{1,0}\overline{h}_{2,0}\Big)\phi(x,y)K_{l}(\eta)\psi_{\delta}(\eta-\xi)\psi_{\delta}(\eta-\zeta)\leq \sum_{i=1}^{6}J_{i}(\delta,l),\ \ \ \delta>0,\ \ l\in\mathbb{N}.
	\end{equation}
	Here we denoted by $\overline{h}$ the conjugate function $\overline{h}=1-h$, and   
	\begin{equation*}
		J_{1}(\delta,l)=
		\E\int_{0}^{t}\int h_{1}(s)\overline{h}_{2}(s)\Big(b(\xi)\nabla_{x}^{2}+2\sqrt{b(\xi)b(\zeta)}\nabla_{x,y}^{2}+b(\zeta)\nabla_{y}^{2}\Big)\phi(x,y)K_{l}(\eta)\psi_{\delta}(\eta-\xi)\psi_{\delta}(\eta-\zeta),
	\end{equation*}

	\begin{equation*}
		J_{2}(\delta,l)=\frac{1}{2}\E\sum_{k=1}^{\infty}\int_{0}^{t}\int \phi(x,y)K_{l}(\eta)\psi_{\delta}(\eta-\xi)\psi_{\delta}(\eta-\zeta)\big\lvert \sigma_{k}(x,\xi)-\sigma_{k}(y,\zeta)\big\rvert^{2}d\nu_{x,s}^{1}(\xi)d\nu_{y,s}^{2}(\zeta),
	\end{equation*}
	
	\begin{equation*}
		J_{3}(\delta,l)=-\frac{1}{2}\E\int_{0}^{t}\int h_{1}(s)\phi(x,y)K'_{l}(\eta)\psi_{\delta}(\eta-\xi)\psi_{\delta}(\eta-\zeta)\Sigma^{2}(y,\zeta)d\nu_{y,s}^{2}(\zeta),
	\end{equation*}
	
	\begin{equation*}
		J_{4}(\delta,l)=\frac{1}{2}\E\int_{0}^{t}\int\overline{h}_{2}(s)\phi(x,y)K'_{l}(\eta)\psi_{\delta}(\eta-\xi)\psi_{\delta}(\eta-\zeta)\Sigma^{2}(x,\xi)d\nu_{x,s}^{1}(\xi),
	\end{equation*}
	
	\begin{equation*}
		J_{5}(\delta,l)=-\E\int_{0}^{t}\int\overline{h}_{2}^{+}(s)\phi(x,y)K'_{l}(\eta)\psi_{\delta}(\eta-\xi)\psi_{\delta}(\eta-\zeta) dm_{1}(x,s,\xi),
	\end{equation*}
	and
	\begin{equation*}
		J_{6}(\delta,l)=\E\int_{0}^{t}\int h_{1}^{-}(s)\phi(x,y)K'_{l}(\eta)\psi_{\delta}(\eta-\xi)\psi_{\delta}(\eta-\zeta) dm_{2}(y,s,\zeta),
	\end{equation*}
	where $\nabla_{x,y}^{2}\phi(x,y):=\sum_{i=1}^{d}\partial_{x_{i}y_{i}}^{2}\phi(x,y)$.
\end{lem}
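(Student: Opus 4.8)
The plan is to derive the inequality from the kinetic formulation \eqref{kinetic_cond} (equivalently its time-integrated form \eqref{kinetic_test}) written separately for the two solutions in two disjoint sets of variables, and then to combine the two resulting semimartingales by the It\^o product formula. Concretely, I would fix the parameters $(y,\zeta,\eta)$ and apply \eqref{kinetic_test} to $h_1^{+}$ with the test function $\psi(x,\xi)=\phi(x,y)K_l(\eta)\psi_\delta(\eta-\xi)\psi_\delta(\eta-\zeta)$, regarded as a function of $(x,\xi)$ alone; this presents $t\mapsto\dInner{h_1^{+}(t),\psi}$ as a continuous semimartingale whose drift carries the parabolic term $\dInner{h_1(s),b(\xi)\Delta_x\psi}$, the $\Sigma^2$-correction and the kinetic measure $m_1$, and whose martingale part is the $\sigma_k(x,\xi)$-stochastic integral. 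I would do the same for $h_2^{+}$ in the variables $(y,\zeta)$, and hence for its conjugate $\overline{h}_2^{+}=1-h_2^{+}$, for which the constant contributes nothing so that $d\overline{h}_2^{+}=-dh_2^{+}$ and the measure term changes sign. Since $\phi\in C_c^\infty(\mathcal{O}^2)$ and $K_l,\psi_\delta$ are smooth and compactly supported, these are admissible and no boundary terms appear.

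Next I would apply the It\^o product formula to the two semimartingales and integrate over the remaining variables, so that $\E\int\big(h_1^{+}(t)\overline{h}_2^{+}(t)-h_{1,0}\overline{h}_{2,0}\big)G$, with $G=\phi K_l\psi_\delta\psi_\delta$, is written as the sum of three contributions: the drift from $dh_1^{+}$ tested against $\overline{h}_2^{+}$, the drift from $d\overline{h}_2^{+}$ tested against $h_1^{+}$, and the quadratic covariation of the two martingales; taking expectations annihilates the two martingale integrals. The covariation produces $\sum_k\int\sigma_k(x,\xi)\sigma_k(y,\zeta)\,d\nu^1 d\nu^2$, and combining it with the two $\Sigma^2$-drifts through the identity $\tfrac12\Sigma^2(x,\xi)+\tfrac12\Sigma^2(y,\zeta)-\sum_k\sigma_k(x,\xi)\sigma_k(y,\zeta)=\tfrac12\sum_k|\sigma_k(x,\xi)-\sigma_k(y,\zeta)|^2$ yields exactly $J_2$. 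The leftover pieces, which appear when the $\eta$-derivative in the integration by parts falls on the cutoff $K_l$ rather than on the mollifiers, are precisely $J_3$ and $J_4$, carrying $K_l'(\eta)$.

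It remains to treat the parabolic and measure contributions. Here the Laplacian drifts give $b(\xi)\nabla_x^2\phi$ from the $h_1$-equation and $b(\zeta)\nabla_y^2\phi$ from the $\overline{h}_2$-equation, while the kinetic measures enter through $-m_1(\partial_\xi\varphi)$ and $+m_2(\partial_\zeta\varphi)$. The key point is to split each measure, using the lower bound $m_i\ge n_i$ for the parabolic dissipation \eqref{gen_kinetic_measure} together with the chain rule \eqref{chain_rule}, into a diagonal part that recombines with the Laplacian drifts and an off-diagonal remainder. The diagonal parts of the two dissipations, together with $b(\xi)\nabla_x^2\phi$ and $b(\zeta)\nabla_y^2\phi$, reassemble into the full perfect-square operator $\big(b(\xi)\nabla_x^2+2\sqrt{b(\xi)b(\zeta)}\nabla_{x,y}^2+b(\zeta)\nabla_y^2\big)\phi$ of $J_1$, the cross term $2\sqrt{b(\xi)b(\zeta)}\nabla_{x,y}^2\phi$ being produced by the correlation of $\nabla_x\int_0^{u_1}\sqrt{b}$ and $\nabla_y\int_0^{u_2}\sqrt{b}$; the residual measure mass, which survives only where the $\eta$-derivative hits $K_l$, produces $J_5$ and $J_6$, with the one-sided limits forcing the representatives $\overline{h}_2^{+}$ and $h_1^{-}$.

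The main obstacle is exactly this last recombination of the parabolic dissipation. In the generalized $L^1$ setting the chain rule \eqref{chain_rule} holds only after truncation by functions in $C_c(\mathbb{R})$ and the dissipation is \emph{not} a measure in the $\xi$-variable, so one cannot manipulate $\partial_\xi$ directly against it; instead one must carry the mollifier $\psi_\delta$ and the cutoff $K_l$ throughout, justify the chain rule on the support of $\psi_\delta(\eta-\cdot)$, perform the integration by parts in $(\xi,\zeta,\eta)$ while accounting for every $\psi_\delta'$ and $K_l'$ term, and only then identify $J_1$ through $J_6$. Keeping these derivative-of-mollifier terms under control and verifying that the inequality $m_i\ge n_i$ is invoked with the correct sign is the delicate part of the argument; the passage to the diagonal (letting $\delta\to0$) and the removal of the cutoff (letting $l\to\infty$) are deferred to the application of the lemma and are not needed here.
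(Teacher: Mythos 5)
Your proposal follows essentially the same route as the paper's proof: write the kinetic formulation for $h_1^{+}$ and $\overline{h}_2^{+}$ in separated variables, apply the It\^o product formula so that the quadratic covariation combines with the two $\Sigma^2$ drifts into $J_2$ while the $K_l'$ terms produce $J_3,J_4$, and then use $m_i\ge n_i$ together with the truncated chain rule \eqref{chain_rule} to recombine the two parabolic dissipations with the Laplacian drifts into the perfect-square operator of $J_1$, leaving $J_5,J_6$ as the residual measure terms with the correct one-sided representatives. The only cosmetic difference is that the paper first establishes the identity for product test functions $\varphi_1(x,\xi)\varphi_2(y,\zeta)\varphi_3(\eta)$ and passes to the general test function by density, whereas you plug in the specific $\alpha=\phi K_l\psi_\delta\psi_\delta$ directly; this does not change the argument.
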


\bigskip

\begin{prop}\label{uniqueness_prelim}
	Assume Hypotheses \ref{Hypothesis1} and \ref{Hypothesis2}. Let $u_{1},u_{2}$ be kinetic solutions of equation \eqref{limit_para} with initial datum $u_{1,0}$, $u_{2,0}$, respectively. Denote $h_{1}=\mathbf{1}_{u_{1}>\xi}$, $h_{2}=\mathbf{1}_{u_{2}>\xi}$, then for every $t\in[0,T]$ and nonnegative $\varphi\in C^{\infty}_{c}(\mathcal{O})$
	\begin{equation}\label{goal1}
		\begin{array}{ll}
			\ds{\E\int_{\mathcal{O}}\int_{\mathbb{R}}h_{1}^{\pm}(t)\overline{h}_{2}^{\pm}(t)\varphi(x)d\xi dx }
			&\ds{\leq \int_{\mathcal{O}}\int_{\mathbb{R}}h_{1,0}\overline{h}_{2,0}\varphi(x)d\xi dx }\\
			\vs
			&\ds{\quad +\E\int_{0}^{t}\int_{\mathcal{O}}\int_{\mathbb{R}}h_{1}(s)\overline{h}_{2}(s)b(\xi)\Delta_{x}\varphi(x)d\xi dxds }.
		\end{array}
	\end{equation}
\end{prop}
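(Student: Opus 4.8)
The plan is to start from the doubling-of-variables inequality in Lemma \ref{contraction} and to pass to the limit, collapsing the doubled spatial and velocity variables onto the diagonal. Concretely, I would insert the test function $\phi(x,y)=\varphi(x)\rho_\epsilon(x-y)$, which is nonnegative and, for $\epsilon$ small, compactly supported in $\mathcal{O}^2$ since $\varphi\in C^\infty_c(\mathcal{O})$, and then send the three parameters to their limits in the order $l\to\infty$, $\delta\to0$, $\epsilon\to0$. The target is to show that $J_2,\dots,J_6$ all vanish while $J_1$ converges to the single-variable expression $\mathbb{E}\int_0^t\int_\mathcal{O}\int_\mathbb{R}h_1\overline{h}_2\,b(\xi)\Delta_x\varphi\,d\xi\,dx\,ds$; together with the convergence of the left-hand side this yields \eqref{goal1}.

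The heart of the argument is $J_1$. Its coefficient array factors as a perfect square, so the operator acting on $\phi$ is $\big(\sqrt{b(\xi)}\nabla_x+\sqrt{b(\zeta)}\nabla_y\big)^2$. Applying this to $\varphi(x)\rho_\epsilon(x-y)$ and using $\partial_{y_i}\rho_\epsilon(x-y)=-\partial_{x_i}\rho_\epsilon(x-y)$, a direct computation produces the three terms
\[
b(\xi)\Delta\varphi(x)\rho_\epsilon(x-y)+2\sqrt{b(\xi)}\big(\sqrt{b(\xi)}-\sqrt{b(\zeta)}\big)\nabla\varphi(x)\cdot\nabla\rho_\epsilon(x-y)+\big(\sqrt{b(\xi)}-\sqrt{b(\zeta)}\big)^2\varphi(x)\Delta\rho_\epsilon(x-y).
\]
The first term is exactly the wanted contribution: integrating out $\eta$ (so that $K_l\to1$ and $\int\psi_\delta(\eta-\xi)\psi_\delta(\eta-\zeta)\,d\eta$ is an approximate identity in $\xi-\zeta$ of width $\sim\delta$, collapsing $\zeta\to\xi$ as $\delta\to0$) and then letting $\epsilon\to0$ produces the stated $b(\xi)\Delta_x\varphi$ term. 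The other two terms are the error I must control: since the velocity mollifiers force $|\xi-\zeta|\lesssim\delta$, the local $\gamma$-H\"older continuity of $\sqrt{b}$ from Hypothesis \ref{Hypothesis2} (see \eqref{holder_cont}) bounds $\big|\sqrt{b(\xi)}-\sqrt{b(\zeta)}\big|\lesssim\delta^\gamma$ on bounded velocity sets, so these terms are of size $O(\delta^\gamma\epsilon^{-1})$ and $O(\delta^{2\gamma}\epsilon^{-2})$ and vanish once $\delta\to0$ is taken before $\epsilon\to0$.

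For the remaining terms: $J_2$ tends to zero because \eqref{sgfine1} gives $\sum_k|\sigma_k(x,\xi)-\sigma_k(y,\zeta)|^2\lesssim|x-y|^2+|\xi-\zeta|^2\lesssim\epsilon^2+\delta^2$, whence $J_2\lesssim(\epsilon^2+\delta^2)\,T\|\varphi\|_{L^1}\to0$. The four correction terms $J_3,\dots,J_6$ each carry the factor $K_l'(\eta)$, which by \eqref{mollifier} is supported on $2^l\le|\eta|\le2^{l+1}$ with $|K_l'|\le2^{-l}$; combined with $\psi_\delta(\eta-\cdot)$ this localizes the velocity variable near $2^l$. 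For $J_3,J_4$ the quadratic bound \eqref{sgfine2} on $\Sigma^2$ together with the integrability of $u_1,u_2$ makes these of order $2^{-l}\,\mathbb{E}\int\mathbf{1}_{|u_i|\gtrsim2^l}|u_i|^2\to0$, while for $J_5,J_6$ the decay property of the kinetic measures (Definition \ref{def_kinetic_measure}(3)) gives order $2^{-l}\,\mathbb{E}\,m_i([0,t]\times\mathcal{O}\times\{|\xi|\gtrsim2^l\})\to0$ as $l\to\infty$. Hence letting $l\to\infty$ first removes $J_3,\dots,J_6$.

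Finally, the left-hand side of Lemma \ref{contraction} converges, under the same three limits and using the left/right-continuous representatives $h_i^\pm$, to $\mathbb{E}\int_\mathcal{O}\int_\mathbb{R}\big(h_1^\pm(t)\overline{h}_2^\pm(t)-h_{1,0}\overline{h}_{2,0}\big)\varphi\,d\xi\,dx$, and collecting the limits gives \eqref{goal1}. The step I expect to be the main obstacle is the control of the two error terms in $J_1$: one must justify the H\"older bound uniformly enough in the velocity variable, which requires splitting the $\xi$-integral into a bounded part and a tail and absorbing the growing H\"older constant $C(R)$ against the moments of $u_1,u_2$ before sending $\delta\to0$. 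This is exactly the place where Hypothesis \ref{Hypothesis2} enters, and keeping the limit order $l\to\infty$, $\delta\to0$, $\epsilon\to0$ straight is what makes every term behave.
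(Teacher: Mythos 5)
Your overall strategy (test with $\varphi(x)\rho_\epsilon(x-y)$ in Lemma \ref{contraction}, identify the main term in $J_1$, kill the rest) is the paper's strategy, but there are two genuine gaps, and they are exactly at the step you flag as delicate.

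First, your bound on $J_2$ is wrong: integrating $K_l(\eta)\psi_\delta(\eta-\xi)\psi_\delta(\eta-\zeta)$ over $\eta$ does not give a quantity of order $1$ but of order $\delta^{-1}$ (it is $(\psi_\delta*\psi_\delta)(\xi-\zeta)\le C\delta^{-1}$), so from \eqref{sgfine1} one gets $J_2\lesssim t\|\varphi\|_\infty(\epsilon^{2}\delta^{-1}+\delta)$, not $\epsilon^{2}+\delta^{2}$. With your decoupled order of limits ($\delta\to0$ at fixed $\epsilon$, then $\epsilon\to0$) the term $\epsilon^{2}\delta^{-1}$ blows up; with the reverse order the $J_1$ errors $\delta^{\gamma}\epsilon^{-1}$, $\delta^{2\gamma}\epsilon^{-2}$ blow up. The two competing requirements force a coupled limit $\epsilon=\delta^{\alpha}$ with $2\alpha-1>0$ and $\gamma-\alpha>0$, i.e.\ $\alpha\in(1/2,\gamma)$, which is nonempty precisely because Hypothesis \ref{Hypothesis2} demands $\gamma>1/2$. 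This coupling is the actual point where $\gamma>1/2$ enters; your proposal locates the role of Hypothesis \ref{Hypothesis2} only in the H\"older bound itself and misses why the exponent threshold matters.

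Second, taking $l\to\infty$ \emph{first} undermines the $J_1$ estimate: the H\"older inequality \eqref{holder_cont} is only local, and it is the cutoff $K_l$ (together with $\psi_\delta$) that confines $\xi,\zeta$ to $|\cdot|\le 2^{l+1}+\delta$, so that the constant is $C(2^{l+1}+1)$ at fixed $l$. Once $K_l\to1$ there is no uniform H\"older constant, and your suggested repair (absorbing $C(R)$ against moments of $u_i$) is not available here since under Hypothesis \ref{Hypothesis2} alone $C(R)$ can grow arbitrarily fast in $R$. The correct order is the paper's: send $\delta\to0$ with $\epsilon=\delta^{\alpha}$ at fixed $l$ (which removes $J_1$'s errors and $J_2$ and reduces $J_3,\dots,J_6$ to tail quantities), and only then send $l\to\infty$, using the $L^1$ uniform integrability of $u_1,u_2$ for $J_3,J_4$ and the vanishing-at-infinity property of the kinetic measures for $J_5,J_6$. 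Your treatment of $J_3,\dots,J_6$ and of the left-hand side is otherwise consistent with the paper.
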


\begin{proof}
	Let $0\leq \varphi\in C^{\infty}_{c}(\mathcal{O})$. By applying Lemma \ref{contraction} to $\phi(x,y)=\varphi(x)\rho_{\epsilon}(x-y)$, where $\rho_{\epsilon}$ is defined in \eqref{mollifier}, we have for every $l\in\mathbb{N}$,
	\begin{equation}\label{base}
		\begin{array}{ll}
			&\ds{\E\int_{\mathcal{O}}\int_{\mathbb{R}}h^{\pm}_{1}(t,x,\xi)\overline{h}^{\pm}_{2}(t,x,\xi)\varphi(x)K_{l}(\xi)d\xi dx }\\
			\vs
			&\ds{= \E\int h_{1}^{\pm}(t,x,\xi)\overline{h}^{\pm}_{2}(t,y,\zeta)\varphi(x)\rho_{\epsilon}(x-y)K_{l}(\eta)\psi_{\delta}(\eta-\xi)\psi_{\delta}(\eta-\zeta) +\Lambda_{t}(\epsilon,\delta,l) }\\
			\vs
			&\ds{\leq \int h_{1,0}(x,\xi)\overline{h}_{2,0}(y,\zeta)\varphi(x)\rho_{\epsilon}(x-y)K_{l}(\eta)\psi_{\delta}(\eta-\xi)\psi_{\delta}(\eta-\zeta)+\sum_{i=1}^{6}J_{i}(\epsilon,\delta,l)+\Lambda_{t}(\epsilon,\delta,l)  }
		\end{array}
	\end{equation}
	where $J_{i}(\epsilon,\delta,l)$, $1\leq i\leq 6$, are introduced in Proposition \ref{contraction}, with $\phi(x,y)$ replaced by $\varphi(x)\rho_{\epsilon}(x-y)$,   and 
	\begin{equation*}
		\lim_{\epsilon,\delta\to0}\Lambda_{t}(\epsilon,\delta,l)=0,\ \ \  l\in\mathbb{N},\ \ \ t\geq 0.
	\end{equation*}
	Now we aim to estimate $J_{i}(\epsilon,\delta,l)$, for $1\leq i\leq 6$. First, note that $\nabla_{x}\rho_{\epsilon}(x-y)=-\nabla_{y}\rho_{\epsilon}(x-y)$, we can write
	\begin{equation*}
		\begin{array}{ll}
			\ds{J_{1}(\epsilon,\delta,l)}
			&\ds{=\E\int_{0}^{t}\int h_{1}\overline{h}_{2}b(\xi)\Delta_{x}\varphi(x)\rho_{\epsilon}(x-y)K_{l}(\eta)\psi_{\delta}(\eta-\xi)\psi_{\delta}(\eta-\zeta) }\\
			\vs
			&\ds{\quad+ \E\int_{0}^{t}\int h_{1}\overline{h}_{2}\big(\sqrt{b(\xi)}-\sqrt{b(\zeta)}\big)^{2}\varphi(x)\Delta_{x}\rho_{\epsilon}(x-y)K_{l}(\eta)\psi_{\delta}(\eta-\xi)\psi_{\delta}(\eta-\zeta) }\\
			\vs
			&\ds{\quad +2\E\int_{0}^{t}\int h_{1}\overline{h}_{2}\sqrt{b(\xi)}\big(\sqrt{b(\xi)}-\sqrt{b(\zeta)}\big)\nabla_{x}\varphi(x)\cdot\nabla_{x}\rho_{\epsilon}(x-y)K_{l}(\eta)\psi_{\delta}(\eta-\xi)\psi_{\delta}(\eta-\zeta) }.
		\end{array}
	\end{equation*}
	Then, by the locally $\gamma$-H{\"o}lder continuity of $\sqrt{b}$ on $\mathbb{R}$, we have 
	\begin{equation}\label{J1}
		\begin{array}{ll}
			\ds{J_{1}(\epsilon,\delta,l)}
			&\ds{\leq \E\int_{0}^{t}\int h_{1}\overline{h}_{2}b(\xi)\Delta_{x}\varphi(x)\phi_{\epsilon}(x-y)K_{l}(\eta)\psi_{\delta}(\eta-\xi)\psi_{\delta}(\eta-\zeta) }\\
			\vs
			&\ds{\quad +t\cdot C_{l}\Big(\norm{\varphi}_{\infty}\epsilon^{-2}\delta^{2\gamma}+\norm{\nabla\varphi}_{\infty}\epsilon^{-1}\delta^{\gamma}\Big) }.
		\end{array}
	\end{equation}
	Due to \eqref{sgfine1}, we have
	\begin{equation}\label{J2}
		\begin{array}{ll}
		\ds{J_{2}(\epsilon,\delta,l)}
		&\ds{\leq C\E\int_{0}^{t}\int \varphi(x)\rho_{\epsilon}(x-y)\abs{x-y}^{2}\psi_{\delta}(\eta-\zeta)\psi_{\delta}(\eta-\zeta)d\nu_{x,s}^{1}(\xi)d\nu_{y,s}^{2}(\zeta) }\\
		\vs
		&\ds{\quad +C\E\int_{0}^{t}\int \varphi(x)\rho_{\epsilon}(x-y)\psi_{\delta}(\eta-\zeta)\psi_{\delta}(\eta-\zeta)\abs{\xi-\zeta}^{2}d\nu_{x,s}^{1}(\xi)d\nu_{y,s}^{2}(\zeta) }\\
		\vs
		&\ds{\leq ct\norm{\varphi}_{\infty}\delta^{-1}\epsilon^{2}+ct\norm{\varphi}_{\infty}\delta }.
		\end{array}
	\end{equation}
	Moreover, thanks to \eqref{sgfine2} we have
	\begin{equation*}
		\begin{array}{ll}
			\ds{J_{3}(\epsilon,\delta,l)}
			&\ds{\leq \frac{C\norm{\varphi}_{\infty}}{2^{l}}\E\int_{0}^{t}\int_{\mathcal{O}}\int_{\mathbb{R}}\mathbf{1}_{2^{l}-\delta\leq\abs{\zeta}\leq 2^{l+1}+\delta}\Sigma^{2}(y,\zeta)d\nu_{y,s}^{2}(\zeta)dyds}\\
			\vs
			&\ds{\leq \frac{C\norm{\varphi}_{\infty}}{2^{l}}\E\int_{0}^{t}\int_{\mathcal{O}}\int_{\mathbb{R}}\mathbf{1}_{2^{l}-\delta\leq\abs{\zeta}\leq 2^{l+1}+\delta}d\nu_{y,s}^{2}(\zeta)dyds }\\
			\vs
			&\ds{\quad \quad +\frac{C\norm{\varphi}_{\infty}}{2^{l}}\E\int_{0}^{t}\int_{\mathcal{O}}\int_{\mathbb{R}}\mathbf{1}_{2^{l}-\delta\leq\abs{\zeta}\leq 2^{l+1}+\delta}\abs{\zeta}^{2}d\nu_{y,s}^{2}(\zeta)dyds},
		\end{array}
	\end{equation*}
	and this implies that 
	\begin{equation}\label{J3}
		\limsup_{\delta\to0}\ J_{3}(\epsilon,\delta,l)\leq \frac{c\norm{\varphi}_{\infty}}{2^{l}}t+c\norm{\varphi}_{\infty}\E\int_{0}^{t}\int_{\O}\mathbf{1}_{2^{l}\leq \abs{u_{2}(s,y)}}\abs{u_{2}(s,y)}dyds.
	\end{equation}
	Similarly, we obtain that
	\begin{equation}\label{J4}
		\limsup_{\delta\to0}\ J_{4}(\epsilon,\delta,l)\leq \frac{c\norm{\varphi}_{\infty}}{2^{l}}t+c\norm{\varphi}_{\infty}\E\int_{0}^{t}\int_{\O}\mathbf{1}_{2^{l}\leq \abs{u_{1}(s,x)}}\abs{u_{1}(s,x)}dyds.
	\end{equation}
	Furthermore, it is easy to see that
	\begin{equation}\label{J5}
		\begin{array}{ll}
			\ds{J_{5}(\epsilon,\delta,l)\leq \frac{1}{2^{l}}\E\int_{0}^{t}\int_{\mathcal{O}}\int_{\mathbb{R}}\mathbf{1}_{2^{l}-\delta\leq\abs{\xi}\leq 2^{l+1}+\delta}dm_{1}(x,s,\xi) }
		\end{array}
	\end{equation}
	and
	\begin{equation}\label{J6}
		\begin{array}{ll}
			\ds{J_{6}(\epsilon,\delta,l)\leq \frac{1}{2^{l}}\E\int_{0}^{t}\int_{\mathcal{O}}\int_{\mathbb{R}}\mathbf{1}_{2^{l}-\delta\leq\abs{\zeta}\leq 2^{l+1}+\delta}dm_{2}(y,s,\zeta) }.
		\end{array}
	\end{equation}
	Now taking $\epsilon=\delta^{\alpha}$ with $\alpha\in(1/2,\gamma)$ and taking the limit as $\delta\to0$ in \eqref{base}, thanks to \eqref{J1} and \eqref{J2}, we have for every $t\geq0$ and every $l\in\mathbb{N}$
	\begin{equation*}
		\begin{array}{ll}
			&\ds{\E\int_{\mathcal{O}}\int_{\mathbb{R}}h^{\pm}_{1}(t,x,\xi)\overline{h}^{\pm}_{2}(t,x,\xi)\varphi(x)K_{l}(\xi)d\xi dx}\\
			\vs
			&\ds{\leq \int_{\mathcal{O}}\int_{\mathbb{R}}h_{1,0}(x,\eta)\overline{h}_{2,0}(x,\eta)\varphi(x)K_{l}(\eta)d\eta dx +\E\int_{0}^{t}\int_{\mathcal{O}}\int_{\mathbb{R}}h_{1}\overline{h}_{2}b(\eta)\Delta_{x}\varphi(x)K_{l}(\eta)d\eta dxds }\\
			\vs
			&\ds{\quad+\limsup_{\delta\to0}\Big(J_{3}(\delta^{\alpha},\delta,l)+J_{4}(\delta^{\alpha},\delta,l)+J_{5}(\delta^{\alpha},\delta,l)+J_{6}(\delta^{\alpha},\delta,l)\Big) }.
		\end{array}
	\end{equation*}
	Finally, by taking the limit as $l\to+\infty$, due to \eqref{J3}-\eqref{J6} we obtain \eqref{goal1}.
	
\end{proof}

Concerning the behavior near the boundary $\partial\O$, we recall the system $\mathfrak{B}$ of balls introduced in Section \ref{sec_boundary}, and prove the following result.

\begin{prop}\label{uniqueness_further}
	Assume Hypothesis \ref{Hypothesis1} and \ref{Hypothesis2}. Let ball $B\in\mathfrak{B}$ and let $u_{1},u_{2}$ be generalized kinetic solutions to \eqref{limit_para} with initial data $u_{1,0},u_{2,0}$, respectively. Denote $h_{1}=\mathbf{1}_{u_{1}>\xi}$, $h_{2}=\mathbf{1}_{u_{2}>\xi}$, then for every $t\in[0,T]$ and nonnegative $\varphi\in C^{\infty}_{c}(B)$,
	\begin{equation}
		\begin{array}{ll}
			\ds{\E\int_{\mathcal{O}}\int_{\mathbb{R}}h_{1}^{\pm}(t)\overline{h}_{2}^{\pm}(t)\varphi(x)d\xi dx }
			&\ds{\leq \int_{\mathcal{O}}\int_{\mathbb{R}}h_{1,0}\overline{h}_{2,0}\varphi(x)d\xi dx }\\
			\vs
			&\ds{\quad +\E\int_{0}^{t}\int_{\mathcal{O}}\int_{\mathbb{R}}h_{1}(s)\overline{h}_{2}(s)b(\xi)\Delta_{x}\varphi(x)d\xi dxds }.
		\end{array}
	\end{equation}
\end{prop}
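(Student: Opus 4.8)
The plan is to run the doubling-of-variables estimate of Lemma~\ref{contraction} as in the proof of Proposition~\ref{uniqueness_prelim}, the only genuinely new issue being that $\mathrm{supp}\,\varphi$ may now meet $\partial\O$, so that the mollified weight $\phi(x,y)=\varphi(x)\rho_\epsilon(x-y)$ used there leaves $\O^2$ and is inadmissible. Here I would exploit that $B\in\mathfrak{B}$: in the local coordinates $x=(x',x_d)$ attached to $B$ one has $B\cap\O=\{x_d<h(x')\}$ with $h\in C^2$, so there is a fixed transversal direction $\mathbf{e}_d$ along which the doubled variable can be pushed strictly inside. Concretely I would use the shifted mollifier $\rho_\epsilon(x-y-r_\epsilon\mathbf{e}_d)$ with $r_\epsilon=(L+2)\epsilon$, $L$ the Lipschitz constant of $h$, so that for every $x\in\overline{\O}\cap B$ the $y$-support sits at distance $\gtrsim\epsilon$ inside $\O$; and I would multiply $\varphi$ by a boundary-layer sequence $\zeta_\kappa$, taken as the solution of $-\kappa^2\Delta\zeta_\kappa+\zeta_\kappa=1$ so that $\Delta\zeta_\kappa\le0$. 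This renders the doubling estimate applicable to the boundary-vanishing weight $\varphi\zeta_\kappa$.

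With this choice the terms $J_1,\dots,J_6$ are handled exactly as in Proposition~\ref{uniqueness_prelim}: the normal shift leaves the bounds on the derivatives of $\rho_\epsilon$ untouched, so the splitting of $J_1$ by the local $\gamma$-H\"older continuity of $\sqrt{b}$ (Hypothesis~\ref{Hypothesis2}) again yields the leading parabolic term plus errors of order $\epsilon^{-2}\delta^{2\gamma}+\epsilon^{-1}\delta^{\gamma}$, while $J_2$ through $J_6$ are controlled via \eqref{sgfine1}, \eqref{sgfine2} and the decay of the kinetic measures, as in \eqref{J2}--\eqref{J6}. Taking $\epsilon=\delta^{\alpha}$ with $\alpha\in(1/2,\gamma)$ and sending $\delta\to0$ (for fixed $\kappa$ and $l$) kills all of these errors, leaving the leading parabolic term plus one new object, the boundary-layer remainder coming from $b(\xi)\Delta_x(\varphi\zeta_\kappa)=\zeta_\kappa\,b(\xi)\Delta_x\varphi+2b(\xi)\nabla_x\varphi\cdot\nabla\zeta_\kappa+b(\xi)\varphi\,\Delta\zeta_\kappa$.

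The crux is to show that this remainder is asymptotically nonpositive as $\kappa\to0$, so that it can be dropped from the upper bound. The second-order piece, integrated against $h_1\overline{h}_2\ge0$, is nonpositive since $\Delta\zeta_\kappa\le0$, $\varphi\ge0$ and $b\ge0$, and is simply discarded. For the first-order cross term I would invoke the boundary-layer property \eqref{boundary_laryer_property}: writing $\int_{\mathbb{R}}h_1\overline{h}_2\,b(\xi)\,d\xi=(\mathfrak{b}(u_1)-\mathfrak{b}(u_2))^{+}$, it converges formally to $-2\int_{\partial\O}(\mathfrak{b}(u_1)-\mathfrak{b}(u_2))^{+}\nabla\varphi\cdot\nu\,dS$, which vanishes because the homogeneous Dirichlet condition forces $u_1=u_2=0$, hence $(\mathfrak{b}(u_1)-\mathfrak{b}(u_2))^{+}=0$, on $\partial\O$. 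The near-boundary noise contributions inside $J_2$ are treated in the same way using $\sigma_i=0$ on $\partial\O$ from \eqref{sigma_boundary}. Letting $\kappa\to0$ and then $l\to\infty$ in the surviving terms reproduces exactly the right-hand side of the asserted inequality.

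The main obstacle is the rigorous justification of this boundary-layer limit: for generalized kinetic solutions one has only $u_1,u_2\in L^1$, so $(\mathfrak{b}(u_1)-\mathfrak{b}(u_2))^{+}$ has no pointwise trace and \eqref{boundary_laryer_property} cannot be invoked as stated. I would therefore avoid traces and instead estimate the cross term directly on the thin layer $\{0<h(x')-x_d\lesssim\kappa\}$, proving $\tfrac1\kappa\int_{\mathrm{layer}}(\mathfrak{b}(u_1)-\mathfrak{b}(u_2))^{+}|\nabla\varphi|\,dx\to0$. Extracting this quantitative vanishing from the Dirichlet condition together with the structural identities \eqref{div_L2}--\eqref{chain_rule} of generalized kinetic solutions is where the main technical effort lies.
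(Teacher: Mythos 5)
Your skeleton is the right one and matches the paper's: multiply the test function by a boundary-layer sequence $\zeta_\delta$ with $\Delta\zeta_\delta\le 0$, discard the nonpositive term $b(\xi)\varphi\,\Delta\zeta_\delta$, and show that the first-order cross term $2b(\xi)\nabla\varphi\cdot\nabla\zeta_\delta$ vanishes in the limit. Two remarks on the route, though. First, the shifted mollifier $\rho_\epsilon(x-y-r_\epsilon\mathbf{e}_d)$ is an unnecessary detour: the paper does not redo the doubling of variables near $\partial\O$ at all, but simply reuses the conclusion \eqref{goal1} of Proposition \ref{uniqueness_prelim} (already proved for every $\varphi\in C^\infty_c(\O)$) and admits the boundary-vanishing weight $\phi\zeta_\delta$ by approximation; all the work is then done at the level of the limit inequality, not of the doubled kinetic formulation. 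Second, and more importantly, the step you explicitly leave open --- the vanishing of the cross term --- is the actual content of the proposition, and the route you sketch for it would not work as stated: for generalized kinetic solutions one only has $u_1,u_2\in L^1$, so $(\mathfrak{b}(u_1)-\mathfrak{b}(u_2))^{+}$ need not even be integrable ($\mathfrak{b}$ has polynomial growth), and a thin-layer estimate on it is not available. This is a genuine gap, not merely a technicality.

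The paper closes it by never removing the spectral truncation $K_l$ before the boundary-layer limit. One works with $\Psi_l(s,x):=\int_{u_2}^{u_1}b(\xi)K_l(\xi)\,d\xi$, which is bounded; since $\sqrt{b}\,K_l\in C_c(\mathbb{R})$, the chain rule \eqref{chain_rule} built into Definition \ref{def_gen_kinetic_solution} yields $\nabla\Psi_l\in L^2(\Omega\times[0,T]\times\O)$, and the homogeneous Dirichlet condition then places $\Psi_l(s,\cdot)$ in $W^{1,2}_0(\O)$. The cross term is exactly $\int_\O\Psi_l\,\nabla\phi\cdot\nabla\zeta_\delta\,dx=-\int_\O\mathrm{div}\big(\Psi_l\nabla\phi\big)\zeta_\delta\,dx$, which by \eqref{boundary_laryer_property} converges to $-\int_{\partial\O}\Psi_l\,\nabla\phi\cdot\nu\,dS=0$ because the $W^{1,2}$-trace of $\Psi_l$ vanishes --- no pointwise trace of $u_1,u_2$ is ever invoked --- and the Vitali theorem upgrades this to convergence of the expectation. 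Only after sending the boundary-layer parameter to zero does one let $l\to\infty$. So the missing ingredient in your argument is precisely the order of limits together with the use of the chain-rule structure of generalized kinetic solutions to give the truncated primitive $\Psi_l$ genuine $W^{1,2}_0$ regularity; with that in hand the boundary term dies by integration by parts rather than by a trace or thin-layer argument.
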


\begin{proof}
	Let $0\leq \phi\in C^{\infty}_{c}(B)$ and $\{\zeta_{\delta}\}_{\delta\in(0,1)}$ be an $\mathcal{O}$-boundary layer sequence such that $\Delta\zeta_{\delta}\leq 0$ (see Section \ref{sec_boundary}). Now, by approximation we may take $\varphi(x)=\phi(x)\zeta_{\delta}(x)$ as test functions in  \eqref{goal1}, and then by proceeding as in the proof of Proposition \ref{uniqueness_prelim}, we have for every $l\in\mathbb{N}$ and every $\delta\in(0,1)$ 
	\begin{equation}\label{base_boundary}
		\begin{array}{ll}
			&\ds{\E\int_{\mathcal{O}}\int_{\mathbb{R}}\Big(h_{1}^{\pm}(t,x,\xi)\overline{h}_{2}^{\pm}(t,x,\xi)-h_{1}(0,x,\xi)\overline{h}_{2}(0,x,\xi)\Big)\phi(x)\zeta_{\delta}(x)K_{l}(\xi)d\xi dx}\\
			\vs
			&\ds{\leq \E\int_{0}^{t}\int_{\mathcal{O}}\int_{\mathbb{R}}h_{1}(s,x,\xi)\overline{h}_{2}(s,x,\xi)b(\xi)\Delta_{x}\phi(x)\zeta_{\delta}(x)K_{l}(\xi)d\xi dxds}\\
			\vs
			&\ds{\quad+2\E\int_{0}^{t}\int_{\mathcal{O}}\int_{\mathbb{R}}h_{1}(s,x,\xi)\overline{h}_{2}(s,x,\xi)b(\xi)\nabla\phi(x)\cdot\nabla\zeta_{\delta}(x)K_{l}(\xi)d\xi dxds }\\
			\vs
			&\ds{\quad+\E\int_{0}^{t}\int_{\mathcal{O}}\int_{\mathbb{R}}h_{1}(s,x,\xi)\overline{h}_{2}(s,x,\xi)b(\xi)\phi(x)\Delta_{x}\zeta_{\delta}(x)K_{l}(\xi)d\xi dxds+\Gamma_{t}(l,\delta) },
		\end{array}
	\end{equation}
	where $\lim_{l\to\infty}\sup_{\delta\in(0,1)}\Gamma_{t}(\delta,l)=0$, $ t\geq0$. For every $l\in\mathbb{N}$, if we define 
	\begin{equation*}
		\Psi_{l}(s,x):=\int_{\mathbb{R}}h_{1}(s,x,\xi)\overline{h_{2}}(s,x,\xi)b(\xi)K_{l}(\xi)d\xi=\int_{u_{2}(s,x)}^{u_{1}(s,x)}b(\xi)K_{l}(\xi)d\xi,\ \ \ (s,x)\in[0,T)\times\overline{\mathcal{O}},
	\end{equation*}
	then it is clear that $\Psi_{l}\in L^{2}(\Omega;L^{2}([0,T];H))$. Since $\sqrt{b}K_{l}\in C_{c}(\mathbb{R})$, by \eqref{chain_rule} we have 
	\begin{equation*}
		\text{div}(\Psi_{l})=\text{div}\int_{u_{2}}^{u_{1}}b(\xi)K_{l}(\xi)d\xi\in L^{2}(\Omega\times[0,T]\times\O).
	\end{equation*}
	Moreover, due to the boundary condition $u_{1}(s,\cdot)|_{\partial\O}=u_{2}(s,\cdot)|_{\partial\O}=0$, we know $\Psi_{l}(s,x)=0$ for $(s,x)\in[0,T]\times\partial\O$. Hence, we have $\Psi_{l}\in L^{2}(\Omega;L^{2}([0,T];H^{1}))$, and this implies that $\P$-a.s.,
	\begin{equation*}
		\Psi_{l}(s,\cdot)\nabla\phi(\cdot)\in (H^{1})^{d},\ \ \ \ s\in[0,T].
	\end{equation*}
	Then it follows from \eqref{boundary_laryer_property} that, for every $s\in[0,T]$, as $\delta\to0$
	\begin{equation*}
		\begin{array}{ll}
			&\ds{\int_{\mathcal{O}}\int_{\mathbb{R}}h_{1}(s,x,\xi)\overline{h}_{2}(s,x,\xi)b(\xi)K_{l}(\xi)\nabla\phi(x)\cdot\nabla\zeta_{\delta}(x)d\xi dx}\\
			\vs
			&\ds{=\int_{\O}\Psi_{l}(s,x)\nabla\phi(x)\cdot\nabla\zeta_{\delta}(x)dx=-\int_{\O}\text{div}\big(\Psi_{l}(s,x)\nabla\phi(x)\big)\zeta_{\delta}(x)dx }\\
			\vs
			&\ds{\to-\int_{\O}\text{div}\big(\Psi_{l}(s,x)\nabla\phi(x)\big)dx=-\int_{\partial\O}\Psi_{l}(s,x)\nabla\phi(x)\cdot \nu(x) dS(x)=0,\ \ \ \P\text{-a.s.} }.
		\end{array}
	\end{equation*}
	Note that, due to the fact that $0\leq \zeta_{\delta}\leq 1$ and $\phi\in C^{\infty}_{c}(B)$, it follows that for each $l\in\mathbb{N}$
	\begin{equation*}
		\begin{array}{ll}
		&\ds{\sup_{\delta\in(0,1)}\E\int_{0}^{t}\Big\lvert\int_{\O}\Psi_{l}(s,x)\nabla\phi(x)\cdot\nabla\zeta_{\delta}(x)dx\Big\rvert^{2}ds }\\
		\vs
		&\ds{\leq c\ \sup_{\delta\in(0,1)}\E \int_{0}^{t}\int_{\mathcal{O}}\Big\lvert\text{div}\big(\Psi_{l}(s,x)\nabla\phi(x)\big)\zeta_{\delta}(x)\Big\rvert ^{2}dxds\leq C(l,\phi)},
		\end{array}
	\end{equation*}
	and then, by the Vitali convergence theorem we have for each $l\in\mathbb{N}$
	\begin{equation*}
		\lim_{\delta\to0}\E\int_{0}^{t}\Bigg\lvert\int_{\mathcal{O}}\int_{\mathbb{R}}h_{1}(s,x,\xi)\overline{h}_{2}(s,x,\xi)b(\xi)K_{l}(\xi)\nabla\phi(x)\cdot\nabla\zeta_{\delta}(x)d\xi dx\Bigg\rvert ds=0.
	\end{equation*}
	Finally, thanks to the property $\Delta_{x}\zeta_{\delta}\leq 0$, we have $\P$-a.s.,
	\begin{equation*}
		\int_{0}^{t}\int_{\mathcal{O}}\int_{\mathbb{R}}h_{1}(s,x,\xi)\overline{h}_{2}(s,x,\xi)b(\xi)K_{l}(\xi)\phi(x)\Delta_{x}\zeta_{\delta}(x)d\xi dxds\leq 0, \ \ \ \delta\in(0,1),\ \ \ l\in\mathbb{N}.
	\end{equation*}
	Therefore, by taking the limit as $\delta\to0$ and then as $l\to+\infty$ in \eqref{base_boundary}, we complete the proof.
	
\end{proof}

Now we are ready to prove the following comparison principle for generalized kinetic solution, which leads to the proof of uniqueness as well as continuous dependence on the initial condition.

\begin{prop}[Comparison principle]
	\label{L1_contraction}
	Assume Hypotheses \ref{Hypothesis1} and \ref{Hypothesis2}. Let $u$ be a generalized kinetic solution of equation \eqref{limit_para}. Then there exist $u^{+}$ and $u^{-}$, respectatives of $u$, such that, for all $t\in[0,T]$, $h^{\pm}(t,x,\xi)=\mathbf{1}_{u^{\pm}(t,x)>\xi}$ for a.e. $(\omega,x,\xi)$. Moreover, if $u_{1},u_{2}$ are generalized kinetic solutions of equation \eqref{limit_para} with initial data $u_{1,0}$, $u_{2,0}\in L^{1}(\O)$, respectively, then it holds that
	\begin{equation}\label{L1_contraction_eq}
		\sup_{t\in[0,T]}\E\norm{(u_{1}^{\pm}(t)-u_{2}^{\pm}(t))^{+}}_{L^{1}(\mathcal{O})}\leq \norm{(u_{1,0}-u_{2,0})^{+}}_{L^{1}(\mathcal{O})}.
	\end{equation}
\end{prop}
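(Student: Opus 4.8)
The plan is to derive both assertions from the two localized inequalities already established, namely Proposition \ref{uniqueness_prelim} for interior test functions and Proposition \ref{uniqueness_further} for test functions supported in the boundary balls $B\in\mathfrak{B}$. The bridge to the $L^{1}$-contraction is the elementary pointwise identity
\[
\int_{\mathbb{R}}\mathbf{1}_{r_{1}>\xi}\big(1-\mathbf{1}_{r_{2}>\xi}\big)\,d\xi=(r_{1}-r_{2})^{+},\qquad r_{1},r_{2}\in\mathbb{R},
\]
which, once the representatives $h_{i}^{\pm}(t)$ are known to be genuine indicator functions, gives $\int_{\mathbb{R}}h_{1}^{\pm}(t)\overline{h}_{2}^{\pm}(t)\,d\xi=(u_{1}^{\pm}(t)-u_{2}^{\pm}(t))^{+}$ pointwise in $x$. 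Establishing that the representatives are indicators is therefore the first order of business.

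To produce the indicator representatives $u^{\pm}$, I would apply the localized estimate to $u_{1}=u_{2}=u$. Since $u$ is an honest $L^{1}(\O)$-function, its (interior) representative satisfies $h(s)\overline{h}(s)=\mathbf{1}_{u>\xi}\mathbf{1}_{u\le\xi}=0$ for a.e.\ $(\omega,s,x,\xi)$, and likewise $h_{0}\overline{h}_{0}=0$. Hence the whole right-hand side of \eqref{goal1} vanishes, leaving
\[
\E\int_{\mathcal{O}}\int_{\mathbb{R}}h^{\pm}(t,x,\xi)\,\overline{h}^{\pm}(t,x,\xi)\,\varphi(x)\,d\xi\,dx\le0
\]
for every nonnegative $\varphi\in C^{\infty}_{c}(\mathcal{O})$. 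The integrand being nonnegative, this forces $h^{\pm}(t)\overline{h}^{\pm}(t)=0$ a.e., so $h^{\pm}(t,x,\cdot)$ takes only the values $0$ and $1$; being a nonincreasing kinetic function with the correct limits at $\pm\infty$, it is of the form $\mathbf{1}_{u^{\pm}(t,x)>\xi}$, which defines $u^{\pm}$.

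For the contraction itself I would pass from compactly supported test functions to the constant $1$ through a partition of unity. Recall the finite covering $\{B_{i}\}_{i=0}^{N}$ of $\overline{\mathcal{O}}$ with $B_{0}\subset\subset\mathcal{O}$ and $B_{i}\in\mathfrak{B}$ for $1\le i\le N$, and choose $\varphi_{i}\in C^{\infty}_{c}(B_{i})$, $\varphi_{i}\ge0$, with $\sum_{i=0}^{N}\varphi_{i}\equiv1$ on $\overline{\mathcal{O}}$. Applying Proposition \ref{uniqueness_prelim} to the interior piece $\varphi_{0}$ and Proposition \ref{uniqueness_further} to each boundary piece $\varphi_{i}$, $1\le i\le N$, and summing over $i$, the left-hand sides add (by linearity in $\varphi$) to $\E\int h_{1}^{\pm}(t)\overline{h}_{2}^{\pm}(t)\,d\xi\,dx$, the initial terms add to $\int h_{1,0}\overline{h}_{2,0}\,d\xi\,dx$, while the diffusion terms add to
\[
\E\int_{0}^{t}\int_{\mathcal{O}}\int_{\mathbb{R}}h_{1}(s)\overline{h}_{2}(s)\,b(\xi)\,\Delta_{x}\Big(\sum_{i=0}^{N}\varphi_{i}\Big)\,d\xi\,dx\,ds=0,
\]
because $\sum_{i}\varphi_{i}\equiv1$ has vanishing Laplacian. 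Invoking the indicator identity above on both surviving terms yields $\E\norm{(u_{1}^{\pm}(t)-u_{2}^{\pm}(t))^{+}}_{L^{1}(\mathcal{O})}\le\norm{(u_{1,0}-u_{2,0})^{+}}_{L^{1}(\mathcal{O})}$, and taking the supremum over $t\in[0,T]$ gives \eqref{L1_contraction_eq}.

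The main obstacle is not any single estimate—those are absorbed into Lemma \ref{contraction} and Propositions \ref{uniqueness_prelim}--\ref{uniqueness_further}—but the bookkeeping that makes the boundary contribution disappear. One must be certain that the localized inequality \eqref{goal1} genuinely persists up to $\partial\mathcal{O}$ for the boundary-ball pieces (this is exactly where the boundary-layer sequence $\zeta_{\delta}$ with $\Delta\zeta_{\delta}\le0$ together with the Dirichlet condition $u_{j}|_{\partial\mathcal{O}}=0$ enter, forcing the flux $\Psi_{l}|_{\partial\mathcal{O}}=0$ in Proposition \ref{uniqueness_further}), and that the diffusion terms cancel only after the full partition of unity is summed, never piece by piece. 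A secondary point requiring care is the upgrade step: that $h^{\pm}(t)\overline{h}^{\pm}(t)=0$ a.e.\ really promotes the kinetic function $h^{\pm}(t)$ to an indicator, so that the translation back to $(u_{1}^{\pm}-u_{2}^{\pm})^{+}$ is legitimate.
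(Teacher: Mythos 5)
Your proposal is correct and follows essentially the same route as the paper: a partition of unity subordinate to the covering $\{B_{i}\}_{i=0}^{N}$, with Proposition \ref{uniqueness_prelim} on the interior piece and Proposition \ref{uniqueness_further} on the boundary pieces, summed so that the diffusion term vanishes because $\Delta_{x}\big(\sum_{i}p_{i}\big)=0$. The remaining steps you spell out---upgrading $h^{\pm}$ to indicator representatives via the case $u_{1}=u_{2}=u$ and the identity $\int_{\mathbb{R}}h_{1}^{\pm}\overline{h}_{2}^{\pm}\,d\xi=(u_{1}^{\pm}-u_{2}^{\pm})^{+}$---are exactly what the paper delegates to the proof of Theorem 3.3 in \cite{debussche16}.
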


\begin{proof}
	
	Let $h_{1}=\mathbf{1}_{u_{1}>\xi}$ and $h_{2}=\mathbf{1}_{u_{2}>\xi}$. We choose a finite covering $\{B_{i}\}_{i=0}^{N}$ of $\overline{\mathcal{O}}$ such that $B_{i}\in\mathfrak{B}$, $1\leq i\leq N$, and $B_{0}\subset\subset\mathcal{O}$, and a partition of unity $\{p_{i}\}_{i=0}^{N}$ subordinated to $\{B_{i}\}_{i=0}^{N}$. Then we apply Proposition \ref{uniqueness_prelim} to $p_{0}\in C^{\infty}_{c}(\O)$, and apply Corollary \ref{uniqueness_further} to each $p_{i}\in C^{\infty}_{c}(B_{i})$, for $1\leq i\leq N$, and then by taking the sum of all corresponding inequalities and using the property that 
	\begin{equation*}
		\sum_{i=0}^{N}p_{i}(x)=1, \ \ \ x\in\O.
	\end{equation*}
	 we have for every $t\in[0,T]$,
	\begin{equation}
		\E\int_{\mathcal{O}}\int_{\mathbb{R}}h_{1}^{\pm}(t,x,\xi)\overline{h}_{2}^{\pm}(t,x,\xi)d\xi dx\leq \int_{\mathcal{O}}\int_{\mathbb{R}}h_{1,0}(x,\xi)\overline{h}_{2,0}(x,\xi)d\xi dx.
	\end{equation}
	The conclusion of \eqref{L1_contraction_eq} now follows from this proceeds along the lines in \cite[Proof of Theorem 3.3]{debussche16}.

\end{proof}

\section{Well-posedness of equation \eqref{limit_para} in $L^{p}(\O)$, $p\geq2$ }
\label{sec_wellposedness_Lp}

In this section we shall establish the well-posedness for equation \eqref{limit_para} in $L^{p}(\O)$, for $p\geq2$. First, by following the approach in \cite[Proof of Corollary 3.4]{hofmanova2013} and \cite[Proof of Corollary 16]{debussche14}, we can prove the continuity of trajectories of kinetic solutions in $L^{p}(\O)$.
\begin{lem}[Continuity in time]\label{continuity_Lp}
	Assume Hypotheses \ref{Hypothesis1} and \ref{Hypothesis2}. Let $u$ be a kinetic solution to equation \eqref{limit_para} with initial datum $u_{0}\in L^{p}(\O)$ for some $p\geq2$. Then there exists a representative of $u$ which has almost surely continuous trajectories in $L^{p}(\O)$.
\end{lem}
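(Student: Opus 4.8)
The plan is to establish continuity by passing through the kinetic formulation \eqref{kinetic_cond}, testing against a function that concentrates on the level set $\{u=\xi\}$, and exploiting the already-established left/right continuous representatives $h^{\pm}$. The natural strategy, following \cite{hofmanova2013} and \cite{debussche14}, is a two-step argument: first show that the trajectories are \emph{weakly} continuous in $L^p(\O)$ (or continuous into a negative-order Sobolev space), and then upgrade weak continuity to strong continuity by proving that $t\mapsto\norm{u(t)}_{L^p(\O)}$ is continuous, using the standard fact that in a uniformly convex space weak convergence together with convergence of norms implies strong convergence.

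First I would record that the stochastic-integral and kinetic-measure terms on the right-hand side of \eqref{kinetic_cond} define processes with enough regularity in time: the martingale term, by the Burkholder--Davis--Gundy inequality together with the bound \eqref{sgfine2} and the $L^{p}(\Omega;L^{\infty}([0,T];L^{p}(\O)))$ integrability of $u$, is almost surely continuous in time when tested against a fixed $\varphi$, and the drift and kinetic-measure terms are of bounded variation in $t$. Combined with the left- and right-continuous representatives $h^{\pm}$ furnished by the earlier lemma, this yields that $t\mapsto\dInner{h(t),\psi}$ has a continuous modification for each fixed $\psi\in C^{2}_{c}(\O\times\mathbb{R})$, and hence, after testing with $\psi(x,\xi)=\varphi(x)\chi(\xi)$ for suitable $\chi$, that $t\mapsto\Inner{u(t),\varphi}_{H}$ is continuous for every $\varphi$ in a countable dense set. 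This gives weak continuity of $u$ in $L^{p}(\O)$ on a full-measure event.

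Next I would address the norm. The key tool is the generalized It\^o formula: applying \eqref{gen_ito} with $\psi(\xi)=\abs{\xi}^{p}$ (suitably regularized and truncated, since $\psi''\in L^{\infty}$ is required, then passing to the limit) and $\phi$ approximating $\mathbf{1}_{\O}$ gives an expression for $\norm{u(t)}_{L^{p}(\O)}^{p}$ as the initial value plus a drift term, an It\^o-correction term controlled by \eqref{sgfine2}, and a continuous martingale; the dissipative gradient term has a favorable sign. This representation shows $t\mapsto\norm{u(t)}_{L^{p}(\O)}^{p}$ admits an almost surely continuous modification. Since $L^{p}(\O)$ is uniformly convex for $1<p<\infty$, weak continuity plus continuity of the norm upgrades to strong continuity of $t\mapsto u(t)$ in $L^{p}(\O)$, and the left/right limits $h^{\pm}$ coincide for all $t$, so the countable exceptional set $S$ is empty for this representative.

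The main obstacle I anticipate is the rigorous justification of the It\^o formula for $\psi(\xi)=\abs{\xi}^{p}$ at the boundary of integrability: the test function $\phi$ in \eqref{gen_ito} must be extended from $C^{1}_{c}(\O)$ to approximate the constant $\mathbf{1}_{\O}$ (cf.\ Remark \ref{remark_gen_ito}, which permits $\phi\in C^{1}(\overline{\O})$ when $\psi'(u)\in L^{2}(\Omega\times[0,T];H^{1})$), and one must control the gradient and noise terms uniformly as the truncation of $\psi''$ is removed and as $\phi\uparrow\mathbf{1}_{\O}$. Handling the degenerate-diffusion gradient contribution $\nabla\mathfrak{b}(u)\cdot\nabla(\psi'(u)\phi)$ and verifying that the boundary terms vanish (using the Dirichlet condition $u|_{\partial\O}=0$) is where the bounded-domain setting requires genuine care beyond the periodic case; this is precisely the point at which the energy estimates promised for Section \ref{sec_wellposedness_Lp} and the integrability $u\in L^{p}(\Omega;L^{\infty}([0,T];L^{p}(\O)))$ must be invoked to close the argument.
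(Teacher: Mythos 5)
Your proposal has a genuine gap at its first and most important step. You assert that, because the martingale term in \eqref{kinetic_cond} is continuous and the drift and kinetic-measure terms are ``of bounded variation in $t$,'' the map $t\mapsto\dInner{h(t),\psi}$ admits a continuous modification. Bounded variation does not give continuity: the term $m(\partial_{\xi}\varphi)([0,t])$ is monotone in $t$ and may have atoms, and ruling out the effect of these atoms on $u$ is precisely the content of the lemma, not something that comes for free from the left/right-continuous representatives $h^{\pm}$ (which the earlier lemma only identifies off a countable exceptional set $S$). The argument the paper points to (\cite[Corollary 16]{debussche14}, \cite[Corollary 3.4]{hofmanova2013}) closes this by first invoking the comparison machinery of Proposition \ref{L1_contraction} with $u_{1}=u_{2}=u$ to show that $h^{+}(t)$ and $h^{-}(t)$ are \emph{both} indicator functions $\mathbf{1}_{u^{\pm}(t)>\xi}$ for \emph{every} $t$, and then using the jump relation $h^{+}(t)-h^{-}(t)=\partial_{\xi}m(\{t\}\times\cdot)$ from \eqref{kinetic_test}: integrating in $\xi$ kills the right-hand side and yields $u^{+}(t)=u^{-}(t)$ a.e.\ for all $t$, which is the step your argument silently assumes. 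Your closing remark that ``the exceptional set $S$ is empty for this representative'' is a consequence you would need to prove, not one you may read off from strong continuity that was itself derived assuming no jumps.

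The second half of your argument also rests on a tool that is not available here: the generalized It\^{o} formula \eqref{gen_ito} is stated for \emph{weak} solutions, i.e.\ under the regularity $\nabla\mathfrak{b}(u)\in L^{2}$, whereas the lemma concerns kinetic solutions, which in the degenerate case (Hypothesis \ref{Hypothesis3}\ref{deg_condition}, Theorem \ref{kin_deg}) need not be weak solutions. To control $\norm{u(t)}_{L^{p}(\O)}^{p}$ one must instead test the kinetic formulation \eqref{kinetic_test} with $\psi(x,\xi)=\varphi(x)\Psi'(\xi)$, $\Psi''\approx p(p-1)\abs{\xi}^{p-2}$ truncated, where the kinetic measure enters with a favorable sign; the upgrade from weak to strong convergence is then obtained from the uniform $L^{p}$ bound and the decay of $m$ for large $\xi$ (equi-integrability of $\abs{u(t)}^{p}$), rather than from the Radon--Riesz argument applied to an It\^{o} expansion you cannot justify. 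Your general outline (weak continuity plus norm control in a uniformly convex space) is a reasonable skeleton, but both load-bearing steps are missing the ingredients that make them work.
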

Then, combining Lemma \ref{continuity_Lp} together with Proposition \ref{L1_contraction}, we obtain the following result.
\begin{cor}\label{uniqueness_kinetic}
	Assume Hypotheses \ref{Hypothesis1} and \ref{Hypothesis2}. If $u_{1},u_{2}$ are kinetic solutions of equation \eqref{limit_para} with initial datum $u_{1,0},u_{2,0}\in L^{p}(\O)$ for some $p\geq2$, then we have for all $t\in[0,T]$
	\begin{equation*}
		\E\norm{u_{1}(t)-u_{2}(t)}_{L^{1}(\O)}\leq \norm{u_{1,0}-u_{2,0}}_{L^{1}(\O)}.
	\end{equation*}
	This, in particular, implies the uniqueness of kinetic solution to \eqref{limit_para}.
\end{cor}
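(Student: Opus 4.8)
The plan is to obtain the two-sided $L^{1}$ estimate by symmetrizing the one-sided comparison bound already supplied by Proposition \ref{L1_contraction}, and then to upgrade it so that it holds for every $t$ by means of the time continuity from Lemma \ref{continuity_Lp}. First I would check that Proposition \ref{L1_contraction} is applicable: by the remark following Definition \ref{def_gen_kinetic_solution}, in the range $p\geq2$ a kinetic solution is the same as a generalized kinetic solution, and since $\O$ is bounded we have $u_{i,0}\in L^{p}(\O)\subset L^{1}(\O)$. Hence Proposition \ref{L1_contraction} furnishes representatives $u_{i}^{\pm}$, with $h_{i}^{\pm}(t)=\mathbf{1}_{u_{i}^{\pm}(t)>\xi}$, satisfying
\[
\sup_{t\in[0,T]}\E\norm{(u_{1}^{\pm}(t)-u_{2}^{\pm}(t))^{+}}_{L^{1}(\O)}\leq\norm{(u_{1,0}-u_{2,0})^{+}}_{L^{1}(\O)}.
\]
Applying the same proposition to the pair $(u_{2},u_{1})$ gives the companion inequality with the roles of $u_{1}$ and $u_{2}$ exchanged.

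Next I would combine the two bounds through the pointwise identity $\abs{a-b}=(a-b)^{+}+(b-a)^{+}$: adding them yields, for all $t\in[0,T]$,
\[
\E\norm{u_{1}^{\pm}(t)-u_{2}^{\pm}(t)}_{L^{1}(\O)}\leq\norm{u_{1,0}-u_{2,0}}_{L^{1}(\O)}.
\]
It remains to transfer this estimate to a genuine process at every time. By Lemma \ref{continuity_Lp} each $u_{i}$ has a representative with almost surely continuous trajectories in $L^{p}(\O)$, hence in $L^{1}(\O)$ since $\O$ is bounded; I keep the notation $u_{i}$ for it. This continuous representative coincides with $u_{i}^{+}$ (and with $u_{i}^{-}$) for almost every $t$, while $u_{i}^{+}(t)=u_{i}^{-}(t)$ off a countable set $S$. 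Thus the displayed contraction holds with $u_{i}$ in place of $u_{i}^{\pm}$ for all $t$ in a co-null set of times. For the remaining times I would pick $t_{n}\to t$ from that co-null set, use the almost sure $L^{1}$-continuity to get $u_{i}(t_{n})\to u_{i}(t)$ in $L^{1}(\O)$, and apply Fatou's lemma to extend the bound to every $t\in[0,T]$. Uniqueness then follows at once by taking $u_{1,0}=u_{2,0}$, which forces $\E\norm{u_{1}(t)-u_{2}(t)}_{L^{1}(\O)}=0$ for all $t$.

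Since the analytic core has already been absorbed into Proposition \ref{L1_contraction} and the doubling-of-variables Lemma \ref{contraction} behind it, the only place requiring genuine care is the final step: matching the $L^{p}$-continuous representative with the left/right-continuous representatives $u_{i}^{\pm}$ on a co-null set of times, and justifying the Fatou passage to the exceptional times. I expect this bookkeeping, rather than any new estimate, to be the main obstacle.
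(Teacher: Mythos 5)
Your proposal is correct and follows essentially the same route as the paper, which obtains the corollary precisely by combining Proposition \ref{L1_contraction} (applied to both orderings of $u_{1},u_{2}$ and summed via $\abs{a-b}=(a-b)^{+}+(b-a)^{+}$) with the continuous-in-time representative from Lemma \ref{continuity_Lp}. The extra bookkeeping you describe for matching representatives and passing to exceptional times via Fatou is exactly the content the paper leaves implicit.
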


We remark that since every weak solution to \eqref{limit_para} is also a kinetic solution, the results above also remain true for weak solutions to \eqref{limit_para}. 

Now, based on different types of conditions on the diffusion $b$, the well-posedness for \eqref{limit_para} is established in two subsections. More specifically, in Section \ref{sec_nondegenerate} we prove the existence of a weak solution to \eqref{limit_para} when $b$ is non-degenerate; and in Section \ref{sec_degenerate} we prove the existence of a kinetic solution to \eqref{limit_para} in the case when $b$ is degenerate.  

\subsection{The non-degenerate case}
\label{sec_nondegenerate}

In this subsection we will assume Hypothesis \ref{Hypothesis3}\ref{nondeg_condition} holds. Given any initial data $u_{0}\in L^{2\theta}(\O)$, the well-posdeness for \eqref{limit_para} is given in the following theorem.

\begin{thm}\label{weak_nondeg} Assume Hypotheses \ref{Hypothesis1} and \ref{Hypothesis3}\ref{nondeg_condition}. Then for every $u_{0}\in L^{2\theta}(\O)$ and every $T>0$, there exists a unique weak solution 
	\begin{equation*}
		u\in L^{2\theta}(\Omega;L^{\infty}([0,T];L^{2\theta}(\O)))\bigcap L^{2}(\Omega;L^{2}([0,T];H^{1})),
	\end{equation*}
	of equation \eqref{limit_para}, which has $\P$-almost surely continuous trajectories in $L^{2\theta}(\O)$. Moreover, for all $p\in[1,2\theta]$ and $q\in[1,+\infty)$ there is a constant $C_{T,p,q}>0$ such that
	\begin{equation}\label{energy_Lp}
		\begin{array}{ll}
		&\ds{\E\sup_{t\in[0,T]}\norm{u(t)}_{L^{p}(\O)}^{pq}+\E\Big(\int_{0}^{T}\int_{\O}\big(1+\abs{u(s,x)}^{2}\big)^{\frac{p}{2}-1}b(u(s,x))\abs{\nabla u(s,x)}^{2}dxds\Big)^{q}}\\
		\vs
		&\ds{\quad\quad\leq C_{T,p,q}\Big(1+\norm{u_{0}}_{L^{p}(\O)}^{pq}\Big)}.
		\end{array}
	\end{equation}
\end{thm}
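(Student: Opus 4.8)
Uniqueness requires no new work: by Remark~\ref{remark_weak_solution} every weak solution is a kinetic solution, so the $L^{1}$-contraction of Corollary~\ref{uniqueness_kinetic} yields pathwise uniqueness, and the almost surely continuous $L^{2\theta}(\O)$-valued representative is furnished by Lemma~\ref{continuity_Lp}. Hence the plan is to construct a solution and to establish the energy bound \eqref{energy_Lp}. I would set up a Galerkin scheme, projecting \eqref{limit_para} onto $H_{N}=\spn\{e_{1},\dots,e_{N}\}$, which gives the finite system of It\^o SDEs
\[
d\langle u_{N},e_{j}\rangle_{H}=-\langle\nabla\mathfrak{b}(u_{N}),\nabla e_{j}\rangle_{H}\,dt+\langle e_{j},\sigma(u_{N})\,dw\rangle_{H},\qquad 1\le j\le N.
\]
Since $b\in C^{1}$, the drift is locally Lipschitz on the finite-dimensional $H_{N}$, while $\sigma$ is globally Lipschitz by the Remark following Hypothesis~\ref{Hypothesis1}; thus a unique local solution exists, and the estimates below exclude explosion.

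The core of the argument is the $N$-uniform version of \eqref{energy_Lp}. I would apply It\^o's formula to $\Phi_{p}(u_{N}):=\int_{\O}(1+\abs{u_{N}}^{2})^{p/2}\,dx$. Integrating the divergence term by parts produces, for $p\ge2$, the (same-signed) dissipation $-c_{p}\int_{\O}(1+\abs{u_{N}}^{2})^{p/2-1}b(u_{N})\abs{\nabla u_{N}}^{2}\,dx$, which is kept on the left; the It\^o correction is bounded by $c\,\Phi_{p}(u_{N})$ using $\Sigma^{2}(x,\xi)\le D(1+\abs{\xi}^{2})$ from \eqref{sgfine2}; and the martingale term is treated by the Burkholder--Davis--Gundy inequality. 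Taking $p=2\theta$, raising to the $q$-th power, and applying Young's and Gronwall's inequalities yields \eqref{energy_Lp} uniformly in $N$. Since $b(r)\le c(1+\abs{r}^{\theta-1})\le C(1+\abs{r}^{2})^{\theta-1}$, the weighted dissipation controls $\nabla\mathfrak{b}(u_{N})=b(u_{N})\nabla u_{N}$ in $L^{2}(\Omega;L^{2}([0,T];H))$, so that $\mathfrak{b}(u_{N})$ is bounded in $L^{2}(\Omega;L^{2}([0,T];H^{1}))$.

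These bounds give tightness of the laws of $(u_{N},w)$: the spatial $H^{1}$-bound, together with the time regularity inherited from $\Delta\mathfrak{b}(u_{N})\in L^{2}([0,T];H^{-1})$ and the fractional-in-time regularity of the stochastic convolution, gives compactness in $L^{2}([0,T];H)$ by an Aubin--Lions-type criterion and tightness in $C([0,T];H^{-s})$ for $s$ large. Using the Skorokhod representation theorem on the joint laws of any pair $(u_{N},u_{M},w)$ and the pathwise uniqueness from Corollary~\ref{uniqueness_kinetic}, the Gy\"ongy--Krylov characterization then shows that $(u_{N})$ converges in probability in $L^{2}([0,T];H)$ to a limit $u$ adapted to the original filtration. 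Along an almost surely convergent subsequence one identifies the nonlinear term: from $u_{N}\to u$ a.e.\ and the uniform $L^{2\theta}$-integrability, Vitali's theorem gives $b(u_{N})\to b(u)$ strongly in $L^{2\theta/(\theta-1)}$, while $\nabla u_{N}\rightharpoonup\nabla u$ weakly in $L^{2}$; multiplying the strongly convergent factor against the weakly convergent one yields $b(u_{N})\nabla u_{N}\rightharpoonup b(u)\nabla u=\nabla\mathfrak{b}(u)$ and simultaneously confirms $\mathfrak{b}(u)\in L^{2}(\Omega;L^{2}([0,T];H^{1}))$. Passing to the limit in the stochastic integral via the Lipschitz continuity of $\sigma$ and the strong convergence of $u_{N}$, one checks that $u$ satisfies \eqref{weak_test}; lower semicontinuity of the norms preserves \eqref{energy_Lp}, and Lemma~\ref{continuity_Lp} supplies the continuous trajectories.

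The step I expect to be the main obstacle is precisely this identification of the nonlinear diffusion $\nabla\mathfrak{b}(u_{N})\to\nabla\mathfrak{b}(u)$: weak convergence of the gradients alone is insufficient against the genuinely nonlinear $b$ of polynomial growth, and the passage hinges on converting the a priori bounds of \eqref{energy_Lp} into enough uniform integrability to upgrade $b(u_{N})\to b(u)$ to a \emph{strong} convergence compatible with the weakly convergent gradients.
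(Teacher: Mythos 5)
Your reduction of uniqueness and of the continuity of trajectories to Corollary \ref{uniqueness_kinetic} and Lemma \ref{continuity_Lp} is exactly what the paper does, and your overall compactness/Skorokhod/Gy\"ongy--Krylov skeleton matches the paper's. The genuine gap is in the derivation of the $N$-uniform estimate \eqref{energy_Lp} for $p=2\theta>2$ at the level of a spectral Galerkin scheme. The Galerkin equation reads $du_{N}=P_{N}\Delta\mathfrak{b}(u_{N})\,dt+P_{N}\sigma(u_{N})\,dw$, so when you apply It\^o's formula to $\Phi_{p}(u_{N})$ the drift contribution is $\langle \psi'(u_{N}),P_{N}\Delta\mathfrak{b}(u_{N})\rangle_{H}=\langle P_{N}\psi'(u_{N}),\Delta\mathfrak{b}(u_{N})\rangle_{H}$ with $\psi(\xi)=(1+\xi^{2})^{p/2}$. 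For $p>2$ the nonlinear quantity $\psi'(u_{N})$ does \emph{not} lie in $H_{N}$, so you cannot integrate by parts to produce the signed dissipation $-\int_{\O}\psi''(u_{N})b(u_{N})\abs{\nabla u_{N}}^{2}dx$; the commutator term $\langle (I-P_{N})\psi'(u_{N}),\Delta\mathfrak{b}(u_{N})\rangle_{H}$ has no sign and can only be controlled by the very quantity $\nabla\mathfrak{b}(u_{N})\in L^{2}$ that you are trying to bound. Since the whole construction (the $H^{1}$-bound on $\mathfrak{b}(u_{N})$, the time-regularity via $\Delta\mathfrak{b}(u_{N})$, and the verification that the limit satisfies $\mathfrak{b}(u)\in L^{2}(\Omega;L^{2}([0,T];H^{1}))$) rests on the $p=2\theta$ weighted dissipation bound, this is not a detail that can be deferred. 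Note also that even formally your $\psi$ has $\psi''\notin L^{\infty}$ for $p>2$, so a truncation of $|\xi|^{p}$ is needed before any It\^o argument.

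The paper avoids this obstruction by regularizing the equation rather than projecting it: it replaces $b$ by the Yosida-type approximation $b_{\epsilon}=\mathfrak{b}_{\epsilon}'$ with $b_{0}\leq b_{\epsilon}\leq b_{0}+2/\epsilon$, so each approximating problem \eqref{weak_nondeg_eq_approx} is a genuine quasilinear SPDE with bounded non-degenerate diffusion whose well-posedness in $L^{2}\cap H^{1}$ is already known; the generalized It\^o formula \eqref{gen_ito}, valid for actual weak solutions, then yields the correctly signed dissipation and the uniform $L^{p}$ bounds of Proposition \ref{weak_nondeg_approx}. A second, smaller divergence: for the identification of the nonlinear term the paper writes the weak formulation as $\int_{0}^{t}\langle\mathfrak{b}_{n}(\hat{u}_{n}),\Delta\varphi\rangle_{H}ds$ and passes to the limit using only strong $L^{2}$ convergence of $\hat{u}_{n}$ together with the local Lipschitz estimate \eqref{diff_B}, which is simpler than your strong-times-weak product argument $b(u_{N})\nabla u_{N}\rightharpoonup b(u)\nabla u$; your argument is workable in principle (the H\"older exponents $\frac{\theta-1}{2\theta}+\frac{1}{2}<1$ leave room for Vitali), but it is not the step that should worry you — the Galerkin-level $L^{2\theta}$ estimate is.
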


\begin{Remark}\label{L1_almost_surely_weak_solution}
	Under Hypotheses \ref{Hypothesis1} and \ref{Hypothesis3}\ref{nondeg_condition}, if we assume that the noise in \eqref{limit_para} is additive, then following our approach in Section \ref{L1} and Section \ref{sec_appendix} one can show that the $L^{1}$-contraction property holds for weak solutions in the almost sure sense. Namely, if $u_{1},u_{2}$ are weak solutions to equation \eqref{limit_para} with initial data $u_{1,0},u_{2,0}\in L^{2\theta}(\O)$, respectively, then it holds that 
	\begin{equation*}
		\sup_{t\in[0,T]}\norm{u_{1}(t)-u_{2}(t)}_{L^{1}(\O)}\leq \norm{u_{1,0}-u_{2,0}}_{L^{1}(\O)},\ \ \ \P\text{-a.s.}.
	\end{equation*}
\end{Remark}

\subsubsection{The approximation problem}

In order to construct an approximation problem for \eqref{limit_para}, we define
\begin{equation*}
	\tilde{\mathfrak{b}}(r):=\mathfrak{b}(r)-b_{0}r=\int_{0}^{r}b(\xi)d\xi-b_{0}r,\ \ \ \ \ r \in\,\mathbb{R}.
\end{equation*}
Then since Hypothesis \ref{Hypothesis3}\ref{nondeg_condition} implies that $\mathfrak{b}\in C^{2}(\mathbb{R})$ has at most polynomial growth of $\theta$, and for all $r_{1},r_{2}\in\mathbb{R}$
\begin{equation*}
	(\mathfrak{b}(r_{1})-\mathfrak{b}(r_{2}))(r_{1}-r_{2})\geq b_{0}\abs{r_{1}-r_{2}}^{2},
\end{equation*}
 we have $\tilde{\mathfrak{b}}\in C^{2}(\mathbb{R})$ is monotone, so that we can introduce its Yosida approximation by setting
\begin{equation*}
	J_{\epsilon}(r):=(I+\epsilon \tilde{\mathfrak{b}})^{-1}(r),\ \ \ \  \tilde{\mathfrak{b}}_{\epsilon}(r)=\tilde{\mathfrak{b}}(J_{\epsilon}(r))=\frac{1}{\epsilon}\big(r-J_{\epsilon}(r)\big), \ \ \ r\in\mathbb{R},\ \ \ \epsilon>0,
\end{equation*}
and then, we define
\begin{equation*}
	\mathfrak{b}_{\epsilon}(r):=\tilde{\mathfrak{b}}_{\epsilon}(r)+b_{0}r,\ \ \ \ \ b_{\epsilon}(r):=\mathfrak{b}_{\epsilon}'(r),\ \ \ r\in\mathbb{R}.
\end{equation*} 
It can be easily verified that 
\begin{equation*}
	J_{\epsilon}'(r)=\frac{1}{1+\epsilon\big[b(J_{\epsilon}(r))-b_{0}\big]},\ \ \ \ \ b_{\epsilon}(r)=\mathfrak{b}_{\epsilon}'(r)=b_{0}+\frac{b(J_{\epsilon}(r))-b_{0}}{1+\epsilon\big[b(J_{\epsilon}(r))-b_{0}\big]},
\end{equation*}
and moreover, the following lemma holds. The proof is standard and we omit it here.
\begin{lem}
	\label{lem_B_approx}
	 Assume Hypothesis \ref{Hypothesis3}\ref{nondeg_condition}.
	
	\begin{enumerate}
		\item For every $\epsilon>0$, $J_{\epsilon}:\mathbb{R}\to\mathbb{R}$ is a contraction. Namely, for all $r_{1},r_{2}\in\mathbb{R}$
		\begin{equation}\label{Lip_J}
			\abs{J_{\epsilon}(r_{1})-J_{\epsilon}(r_{2})}\leq \abs{r_{1}-r_{2}}.
		\end{equation}
		
		\smallskip
		
		\item For every $\epsilon>0$, $\tilde{\mathfrak{b}}_{\epsilon}\in C^{2}(\mathbb{R})$ is monotone and Lipschitz continuous with Lipschitz constant $2/\epsilon$. Moreover, there exists some constant $c>0$ such that
		\begin{equation}\label{uni_B_approx}
			\abs{\tilde{\mathfrak{b}}_{\epsilon}(r)}\leq \abs{\tilde{\mathfrak{b}}(r)}\leq c(1+\abs{r}^{\theta}),\ \ \  r\in\mathbb{R}.
		\end{equation}
		
		\smallskip
		
		\item For every $\epsilon>0$ we have
		\begin{equation}\label{conv_J}
			\abs{J_{\epsilon}(r)-r}\leq \epsilon\abs{\tilde{\mathfrak{b}}_{\epsilon}(r)},\ \ \ r\in\mathbb{R}.
		\end{equation}
	\end{enumerate}
\end{lem}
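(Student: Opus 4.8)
The plan is to regard $\tilde{\mathfrak{b}}$ as a smooth maximal monotone map on $\mathbb{R}$ and to derive all three claims from the standard resolvent/Yosida calculus, watching only where the non-degeneracy $b\geq b_{0}$ is used. First I would record the structural facts: under Hypothesis \ref{Hypothesis3}\ref{nondeg_condition} one has $\tilde{\mathfrak{b}}'(r)=b(r)-b_{0}\geq 0$, so $\tilde{\mathfrak{b}}\in C^{2}(\mathbb{R})$ is nondecreasing with $\tilde{\mathfrak{b}}(0)=0$, and the map $g(r):=r+\epsilon\tilde{\mathfrak{b}}(r)$ satisfies $g'(r)=1+\epsilon(b(r)-b_{0})\geq 1>0$. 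Hence $g$ is a strictly increasing $C^{2}$ bijection of $\mathbb{R}$, its inverse $J_{\epsilon}=g^{-1}$ is well defined and, by the inverse function theorem, of class $C^{2}$; since $\tilde{\mathfrak{b}}_{\epsilon}=\tilde{\mathfrak{b}}\circ J_{\epsilon}$, this already yields $\tilde{\mathfrak{b}}_{\epsilon}\in C^{2}(\mathbb{R})$.

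For the contraction \eqref{Lip_J}, I would set $s_{i}=J_{\epsilon}(r_{i})$, so that $r_{i}=s_{i}+\epsilon\tilde{\mathfrak{b}}(s_{i})$, and pair the identity $r_{1}-r_{2}=(s_{1}-s_{2})+\epsilon(\tilde{\mathfrak{b}}(s_{1})-\tilde{\mathfrak{b}}(s_{2}))$ with $s_{1}-s_{2}$; monotonicity of $\tilde{\mathfrak{b}}$ discards the nonnegative cross term and leaves $\abs{s_{1}-s_{2}}^{2}\leq(r_{1}-r_{2})(s_{1}-s_{2})\leq\abs{r_{1}-r_{2}}\abs{s_{1}-s_{2}}$, hence \eqref{Lip_J}. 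The remaining properties of $\tilde{\mathfrak{b}}_{\epsilon}=\frac{1}{\epsilon}(I-J_{\epsilon})$ in part (2) then follow at once: monotonicity from $(\tilde{\mathfrak{b}}_{\epsilon}(r_{1})-\tilde{\mathfrak{b}}_{\epsilon}(r_{2}))(r_{1}-r_{2})=\frac{1}{\epsilon}[\abs{r_{1}-r_{2}}^{2}-(J_{\epsilon}(r_{1})-J_{\epsilon}(r_{2}))(r_{1}-r_{2})]\geq 0$ by \eqref{Lip_J}, and the Lipschitz constant $2/\epsilon$ from the triangle inequality $\abs{\tilde{\mathfrak{b}}_{\epsilon}(r_{1})-\tilde{\mathfrak{b}}_{\epsilon}(r_{2})}\leq\frac{1}{\epsilon}(\abs{r_{1}-r_{2}}+\abs{J_{\epsilon}(r_{1})-J_{\epsilon}(r_{2})})\leq\frac{2}{\epsilon}\abs{r_{1}-r_{2}}$.

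For the two bounds in \eqref{uni_B_approx}, the pointwise estimate $\abs{\tilde{\mathfrak{b}}_{\epsilon}(r)}\leq\abs{\tilde{\mathfrak{b}}(r)}$ is the step that needs the sign structure: since $J_{\epsilon}$ is nondecreasing with $J_{\epsilon}(0)=0$ and $r=J_{\epsilon}(r)+\epsilon\tilde{\mathfrak{b}}(J_{\epsilon}(r))$ with $\tilde{\mathfrak{b}}(J_{\epsilon}(r))$ of the same sign as $J_{\epsilon}(r)$, one obtains the sandwich $0\leq J_{\epsilon}(r)\leq r$ for $r\geq0$ (and symmetrically for $r\leq0$), and then $\abs{\tilde{\mathfrak{b}}_{\epsilon}(r)}=\abs{\tilde{\mathfrak{b}}(J_{\epsilon}(r))}\leq\abs{\tilde{\mathfrak{b}}(r)}$ by monotonicity of $\tilde{\mathfrak{b}}$. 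The polynomial bound is just $\abs{\tilde{\mathfrak{b}}(r)}\leq\int_{0}^{\abs{r}}\abs{b(\xi)-b_{0}}\,d\xi\leq c(1+\abs{r}^{\theta})$ from the upper growth in Hypothesis \ref{Hypothesis3}\ref{nondeg_condition}. Finally, \eqref{conv_J} is immediate, indeed an equality, from the definition $\tilde{\mathfrak{b}}_{\epsilon}=\frac{1}{\epsilon}(I-J_{\epsilon})$, which rearranges to $\abs{J_{\epsilon}(r)-r}=\epsilon\abs{\tilde{\mathfrak{b}}_{\epsilon}(r)}$. The only mildly delicate points are the inverse-function-theorem regularity, where $b\geq b_{0}$ is what guarantees $g'\geq1$, and the sandwich $0\leq J_{\epsilon}(r)\leq r$ underlying $\abs{\tilde{\mathfrak{b}}_{\epsilon}}\leq\abs{\tilde{\mathfrak{b}}}$; everything else is routine, which is why the statement can be asserted without a detailed proof.
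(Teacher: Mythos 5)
Your proof is correct and is exactly the standard resolvent/Yosida calculus that the paper invokes when it writes ``The proof is standard and we omit it here'': the contraction via pairing with $s_{1}-s_{2}$, monotonicity and the $2/\epsilon$ Lipschitz bound from the resolvent identity, the sandwich $0\leq J_{\epsilon}(r)\leq r$ for the pointwise bound, and the identity $\tilde{\mathfrak{b}}_{\epsilon}=\frac{1}{\epsilon}(I-J_{\epsilon})$ for \eqref{conv_J}. All steps check out, including the correct use of $b\geq b_{0}$ to make $g'=1+\epsilon(b-b_{0})\geq 1$ and hence $J_{\epsilon}$ well defined and $C^{2}$.
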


		By Lemma \ref{lem_B_approx} we have $b_{\epsilon}\in C^{1}(\mathbb{R})$ with
		\begin{equation}\label{boundedness_b_approx}
			b_{0}\leq b_{\epsilon}(r)\leq b_{0}+2/\epsilon,\ \ \ r\in\mathbb{R}.
		\end{equation}
		 Thanks to \eqref{uni_B_approx} we know there exists $c>0$ such that for every $\epsilon>0$
		\begin{equation*}
			\abs{\mathfrak{b}_{\epsilon}(r)}\leq c(1+\abs{r}^{\theta}),\ \ \ \ r\in\mathbb{R}.
		\end{equation*}
		Moreover, due to conditions \eqref{Lip_J} and \eqref{conv_J}, we have 
		\begin{equation*}
			\abs{\mathfrak{b}_{\epsilon}(r)-\mathfrak{b}(r)}\leq c\abs{J_{\epsilon}(r)-r}\big(1+\abs{J_{\epsilon}(r)}^{\theta-1}+\abs{r}^{\theta-1}\big)\leq C\epsilon\abs{\tilde{\mathfrak{b}}_{\epsilon}(r)}\big(1+\abs{r}^{\theta-1}\big),
		\end{equation*}
		and 
		\begin{equation*}
			\abs{\mathfrak{b}_{\epsilon}(r_{1})-\mathfrak{b}_{\epsilon}(r_{2})}\leq c\abs{r_{1}-r_{2}}\big(1+\abs{J_{\epsilon}(r_{1})}^{\theta-1}+\abs{J_{\epsilon}(r_{2})}^{\theta-1}\big)\leq C\abs{r_{1}-r_{2}}\big(1+\abs{r_{1}}^{\theta-1}+\abs{r_{2}}^{\theta-1}\big).
		\end{equation*}
		This implies that for every $u\in L^{2\theta}(\O)$
		\begin{equation}\label{polynomial_B}
			\norm{\mathfrak{b}_{\epsilon}(u)}_{H}\leq c\Big(1+\norm{u}_{L^{2\theta}(\O)}^{\theta}\Big),\ \ \ \epsilon>0,
		\end{equation}
		\begin{equation}\label{convergence_B}
			\norm{\mathfrak{b}_{\epsilon}(u)-\mathfrak{b}(u)}_{L^{1}(\O)}\leq c\epsilon\Big(1+\norm{u}_{L^{2\theta}(\O)}^{\theta}\Big)\Big(1+\norm{u}_{L^{2(\theta-1)}(\O)}^{\theta-1}\Big),\ \ \ \epsilon>0,
		\end{equation}
		and for every $u_{1},u_{2}\in L^{2\theta}(\O)$
		\begin{equation}
			\label{diff_B}
			\norm{\mathfrak{b}_{\epsilon}(u_{1})-\mathfrak{b}_{\epsilon}(u_{2})}_{L^{1}(\O)}\leq c\norm{u_{1}-u_{2}}_{H}\Big(1+\norm{u_{1}}_{L^{2(\theta-1)}(\O)}^{\theta-1}+\norm{u_{2}}_{L^{2(\theta-1)}(\O)}^{\theta-1}\Big),\ \ \ \epsilon>0.
		\end{equation}

Now we are ready to introduce the following approximation problem for \eqref{limit_para}.
\begin{equation}\label{weak_nondeg_eq_approx}
	\le\{\begin{array}{l}
		\ds{\partial_{t}u^{\epsilon}(t,x)=\text{div}\big(b_{\epsilon}(u^{\epsilon}(t,x))\nabla u^{\epsilon}(t,x)\big)+\sigma(u^{\epsilon}(t,x))\partial_{t}w(t,x), }\\[10pt]
		\ds{u^{\epsilon}(0,x)=u_{0},\ \ \ u^{\epsilon}(t,\cdot)|_{\partial\mathcal{O}}=0 },
	\end{array}\r.
\end{equation}

\begin{prop}\label{weak_nondeg_approx} 
	
	Assume Hypotheses \ref{Hypothesis1} and \ref{Hypothesis3}\ref{nondeg_condition}. Let $p\geq2$, for every $u_{0}\in L^{p}(\O)$ and every $\epsilon,T>0$ there exists a unique weak solution 
	\begin{equation*}
		u^{\epsilon}\in L^{2}(\Omega;C([0,T];H))\bigcap  L^{p}(\Omega;L^{\infty}([0,T];L^{p}(\O)))\bigcap L^{2}(\Omega;L^{2}([0,T];H^{1})).
	\end{equation*}
	of equation \eqref{weak_nondeg_eq_approx}, which has $\P$-almost surely continuous trajectories in $L^{p}(\O)$. Moreover, for all $p,q\in[1,+\infty)$ there is a constant $C_{T,p,q}>0$ such that
	\begin{equation}\label{approx_energy_Lp}
		\begin{array}{ll}
			&\ds{\sup_{\epsilon\in(0,1)}\Bigg[\E\sup_{t\in[0,T]}\norm{u^{\epsilon}(t)}_{L^{p}(\O)}^{pq}+\E\Big(\int_{0}^{T}\int_{\O}\big(1+\abs{u^{\epsilon}(s,x)}^{2}\big)^{\frac{p}{2}-1}b_{\epsilon}(u^{\epsilon}(s,x))\abs{\nabla u_{\epsilon}(s,x)}^{2}dxds\Big)^{q}\Bigg]}\\
			\vs
			&\ds{\quad\quad\leq C_{T,p,q}\Big(1+\norm{u_{0}}_{L^{p}(\O)}^{pq}\Big)}.
			\end{array}
	\end{equation}
\end{prop}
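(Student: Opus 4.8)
The plan is to regard \eqref{weak_nondeg_eq_approx} as a non-degenerate equation with a \emph{bounded} diffusion, and to split the work into two tasks: obtaining existence and uniqueness from the already available $L^{2}$-theory, and then promoting the integrability to $L^{p}$ through an It\^o energy estimate whose constants are tracked to be independent of $\epsilon$.

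\textbf{Existence and uniqueness.} By \eqref{boundedness_b_approx} the coefficient $b_{\epsilon}\in C^{1}(\mathbb{R})$ satisfies the non-degeneracy--boundedness condition \eqref{particular} with $b_{1}=b_{0}+2/\epsilon$. Since $\O$ is bounded and $p\geq2$, we have $u_{0}\in L^{p}(\O)\subset H$, so \cite[Proposition A.3]{cerraixie2023} applies directly and yields, for each fixed $\epsilon>0$, a unique weak solution $u^{\epsilon}\in L^{2}(\Omega;C([0,T];H))\cap L^{2}(\Omega;L^{2}([0,T];H^{1}))$ of \eqref{weak_nondeg_eq_approx}, with $\mathfrak{b}_{\epsilon}(u^{\epsilon})\in L^{2}(\Omega;L^{2}([0,T];H^{1}))$. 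What remains is the higher integrability $u^{\epsilon}\in L^{p}(\Omega;L^{\infty}([0,T];L^{p}(\O)))$, the $L^{p}$-valued continuity of trajectories, and the uniform bound \eqref{approx_energy_Lp}.

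\textbf{Energy estimate.} Put $\Phi_{p}(r):=(1+r^{2})^{p/2}$, so that $\Phi_{p}(r)\sim 1+|r|^{p}$, $|\Phi_{p}'(r)|\leq c(1+|r|)^{p-1}$ and $\Phi_{p}''(r)\sim(1+r^{2})^{p/2-1}$, and set $X(t):=\int_{\O}\Phi_{p}(u^{\epsilon}(t))\,dx$. Applying It\^o's formula \eqref{gen_ito} with $\psi=\Phi_{p}$ and $\phi\equiv1$ --- which for $p>2$ I would justify through $C^{2}$ truncations $\psi_{R}$ of $\Phi_{p}$ with bounded second derivative and $\psi_{R}'(0)=0$ (permitted by Remark \ref{remark_gen_ito}), and ultimately through the Galerkin scheme below --- and using $\nabla\mathfrak{b}_{\epsilon}(u^{\epsilon})=b_{\epsilon}(u^{\epsilon})\nabla u^{\epsilon}$ gives
\begin{equation*}
X(t)+\int_{0}^{t}\!\!\int_{\O}b_{\epsilon}(u^{\epsilon})\Phi_{p}''(u^{\epsilon})|\nabla u^{\epsilon}|^{2}\,dx\,ds=X(0)+\frac12\int_{0}^{t}\!\!\int_{\O}\Phi_{p}''(u^{\epsilon})\Sigma^{2}(x,u^{\epsilon})\,dx\,ds+M_{t}.
\end{equation*}
The dissipation integral is \emph{nonnegative} (since $b_{\epsilon}\geq b_{0}>0$ and $\Phi_{p}''\geq0$) and is kept on the left, so the $\epsilon$-dependent upper bound $b_{\epsilon}\leq b_{0}+2/\epsilon$ never enters. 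On the right, \eqref{sgfine2} gives $\Phi_{p}''(u^{\epsilon})\Sigma^{2}\leq cD(1+|u^{\epsilon}|^{2})^{p/2}$, so the It\^o correction is $\leq cD\,(1+X(s))$. For the martingale $M_{t}=\sum_{k}\int_{0}^{t}\int_{\O}\Phi_{p}'(u^{\epsilon})\sigma_{k}(x,u^{\epsilon})\,dx\,d\beta_{k}$ I would control the quadratic variation by H\"older in $L^{p}$: since $|\Phi_{p}'(u^{\epsilon})|\leq c(1+|u^{\epsilon}|)^{p-1}$ and, by \eqref{sgfine}, $\norm{\sigma_{k}(\cdot,u^{\epsilon})}_{L^{p}(\O)}\leq c\lambda_{k}(1+\norm{u^{\epsilon}}_{L^{p}(\O)})$ with $\sum_{k}\lambda_{k}^{2}<\infty$, one gets $\sum_{k}(\int_{\O}\Phi_{p}'(u^{\epsilon})\sigma_{k}\,dx)^{2}\leq c(1+X)^{2}$. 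A Gronwall argument for the first moment, followed by the Burkholder--Davis--Gundy inequality together with Young's inequality to absorb the $\sup$-term $\tfrac12\E\sup_{t}X(t)^{q}$ into the left-hand side, then yields \eqref{approx_energy_Lp} for every $p,q\in[1,+\infty)$; the second (dissipation) term in \eqref{approx_energy_Lp} follows by reading it off the identity above and using the bounds just obtained.

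\textbf{Uniformity, rigor and continuity.} All constants above depend only on $b_{0}$, $D$, $p$, $q$, $T$ and $|\O|$, hence are independent of $\epsilon$, which gives the supremum over $\epsilon\in(0,1)$ in \eqref{approx_energy_Lp}. I would make the It\^o computation rigorous at the level of the finite-dimensional Galerkin approximations $u^{\epsilon}_{n}=\sum_{k\leq n}\langle u^{\epsilon},e_{k}\rangle_{H}e_{k}$ used to construct $u^{\epsilon}$, where $\Phi_{p}$ may be used without any integrability obstruction and $\E\sup_{t}\norm{u^{\epsilon}_{n}(t)}_{L^{p}(\O)}^{pq}$ is \emph{a priori} finite (the projected system is a finite-dimensional SDE with linear-growth coefficients), so that Gronwall's inequality closes quantitatively; the bounds then pass to the limit $n\to\infty$ by lower semicontinuity. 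Finally, the $L^{p}$-valued continuity follows by combining the weak continuity in $L^{p}$ (a consequence of the $C([0,T];H)$-continuity together with the uniform $L^{p}$-bound) with the continuity of $t\mapsto X(t)$ read off from the It\^o identity: in the uniformly convex space $L^{p}(\O)$, weak continuity plus norm continuity upgrades to strong continuity.

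\textbf{Main obstacle.} The crux is the rigorous $L^{p}$-estimate for $p>2$, where $\Phi_{p}''$ is unbounded and the orthogonal projections in the Galerkin scheme do not commute with the nonlinear diffusion $\divg(b_{\epsilon}(\cdot)\nabla\cdot)$, so the favorable sign of the dissipation is only recovered up to commutator errors that must be shown to vanish in the limit. The mechanism that makes the argument work --- and that simultaneously delivers the $\epsilon$-uniformity --- is this sign structure together with the fact that the linear growth $\Sigma^{2}\leq D(1+|u|^{2})$ and the summability $\sum_{k}\lambda_{k}^{2}<\infty$ control both the It\^o correction and the quadratic variation by the $L^{p}$-energy $X$ itself, independently of the upper bound and Lipschitz constant of $b_{\epsilon}$.
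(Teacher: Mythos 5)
Your proposal is correct and follows essentially the same route as the paper: existence and uniqueness for fixed $\epsilon$ from the $L^{2}$ theory with the bounded coefficient $b_{\epsilon}$, then an $\epsilon$-uniform $L^{p}$ energy estimate via It\^o's formula applied to a $C^{2}$ truncation of $\abs{\xi}^{p}$, keeping the signed dissipation on the left and controlling the It\^o correction and martingale through \eqref{sgfine2} and Burkholder--Davis--Gundy before Gronwall. The only real difference is that the paper sidesteps the Galerkin/commutator issue you flag as the main obstacle by applying the already-established generalized It\^o formula \eqref{gen_ito} (via Remark \ref{remark_gen_ito}) directly to the weak solution with the truncations $\psi_{n}$, whose second derivatives are bounded, so no finite-dimensional approximation is needed.
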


\begin{proof}
 Let $u_{0}\in L^{p}(\O)$, for some $p\geq2$. According to \cite[Proposition A.4]{cerraixie2023} and thanks to \eqref{boundedness_b_approx}, we know that for every $T,\epsilon>0$ there exists a unique weak solution
 \begin{equation*}
 	u^{\epsilon}\in L^{2}(\Omega;C([0,T];H)\bigcap L^{2}([0,T];H^{1}))
 \end{equation*}
 for equation \eqref{weak_nondeg_eq_approx}. Now we aim to obtain a uniform estimate for $u^{\epsilon}$ in $L^{p}(\O)$-norm with respect to $\epsilon$. Here we will only provide the proof of \eqref{approx_energy_Lp} for $p\geq2$. The case when $p\in[1,2)$ can be proved by following the approach in \cite[Proof of Proposition 4.4]{gess18} and we omit details here.
 
 Fix any $\epsilon>0$. Let $p\geq2$, we consider the following approximation for $\psi(\xi)=\abs{\xi}^{p}$, $\xi\in\mathbb{R}$, which was used in the proof of \cite[Proposition 5.1]{debussche16}:
 \begin{equation*}
 	\psi_{n}(\xi)=
 	\le\{\begin{array}{l}
 		\ds{\abs{\xi}^{p},\ \ \ \ \abs{\xi}\leq n,}\\
 		\vs
 		\ds{n^{p-2}\Bigg[\frac{p(p-2)}{2}\xi^{2}-p(p-2)n\abs{\xi}+\frac{(p-1)(p-2)}{2}n^{2}\Bigg],\ \ \ \ \abs{\xi}>n}.
 	\end{array}\r.
 \end{equation*}
 Then it follows that $\psi_{n}\in C^{2}(\mathbb{R})$ with $\psi_{n}''\in L^{\infty}(\mathbb{R})$. Moreover, notice that for every $n\in\mathbb{N}$
 \begin{equation*}
 	\psi_{n}'(u^{\epsilon})\in L^{2}(\Omega\times[0,T];H^{1}),
 \end{equation*}
 by Remark \ref{remark_gen_ito} we can take $\psi_{n}$ and $\phi\equiv1$ as test functions in \eqref{gen_ito} to obtain 
 \begin{equation}\label{base_Lp}
 	\begin{array}{ll}
 		&\ds{\int_{\O}\psi_{n}(u^{\epsilon}(t,x))dx =\int_{\O}\psi_{n}(u_{0}(x))dx-\int_{0}^{t}\int_{\O}\nabla \mathfrak{b}_{\epsilon}(u^{\epsilon}(s,x))\cdot \nabla\psi_{n}'(u^{\epsilon}(s,x))dxds }\\
 		\vs
 		&\ds{ \quad\quad+\frac{1}{2}\int_{0}^{t}\int_{\O}\psi_{n}''(u^{\epsilon}(s,x))\Sigma^{2}(x,u^{\epsilon}(s,x))dxds+ \sum_{k=1}^{\infty}\int_{0}^{t}\int_{\O}\psi_{n}'(u^{\epsilon}(s,x))\sigma_{k}(x,u^{\epsilon}(s,x))dxd\beta_{k}(s) }
 	\end{array}
 \end{equation}
 Since $b_{\epsilon}\geq b_{0}>0$ and $\phi_{n}''\geq0$, we have
 \begin{equation*}
 	\begin{array}{ll}
 		\ds{-\int_{0}^{t}\int_{\O}\nabla \mathfrak{b}_{\epsilon}(u^{\epsilon}(s,x))\cdot \nabla\psi_{n}'(u^{\epsilon}(s,x))dxds =-\int_{0}^{t}\int_{\O}\psi_{n}''(u^{\epsilon}(s,x))b_{\epsilon}(u^{\epsilon}(s,x))\abs{\nabla u^{\epsilon}(s,x)}^{2}dxds\leq 0 },
 	\end{array}
 \end{equation*}
 For any $q\geq1$, due to \eqref{sgfine2} and the property $\xi^{2}\psi_{n}''(\xi)\leq p(p-1)\psi_{n}(\xi)$, $\xi\in\mathbb{R}$, it follows that
 \begin{equation*}
 	\begin{array}{ll}
 		&\ds{\Bigg\lvert\int_{0}^{t}\int_{\O}\psi_{n}''(u^{\epsilon}(s,x))\Sigma^{2}(x,u^{\epsilon}(s,x))dxds\Bigg\rvert^{q} \leq c\Bigg\lvert\int_{0}^{t}\int_{\O}\psi_{n}''(u^{\epsilon}(s,x))\big(1+\abs{u^{\epsilon}(s,x)}^{2}\big)dxds\Bigg\rvert^{q} }\\
 		\vs
 		&\ds{\quad\quad\quad\quad\quad\quad\quad\quad\quad\quad\quad\quad\quad\quad\quad\quad\quad\quad\quad\leq C_{T,p,q}\Bigg(1+\int_{0}^{T}\Big(\int_{\O}\psi_{n}(u^{\epsilon}(s,x))dx\Big)^{q}ds\Bigg) }.
 	\end{array}
 \end{equation*}
Moreover, by proceeding as in the proof of \cite[Proposition 5.1]{debussche16}, we have 
 \begin{equation*}
 	\begin{array}{ll}
 		&\ds{\E\sup_{t\in[0,T]}\Bigg\lvert \sum_{k=1}^{\infty}\int_{0}^{t}\int_{\O}\psi_{n}'(u^{\epsilon}(s,x))\sigma_{k}(x,u^{\epsilon}(s,x))dxd\beta_{k}(s)\Bigg\rvert^{q}}\\
 		\vs
 		&\ds{\leq C_{p,q}\E\Bigg(\int_{0}^{T}\int_{\O}\psi_{n}(u^{\epsilon}(s))dx\Big(1+\int_{\O}\psi_{n}(u^{\epsilon}(s))dx\Big)ds\Bigg)^{\frac{q}{2}} }\\
 		\vs
 		&\ds{\leq \frac{1}{2}\E\sup_{t\in[0,T]}\Big(\int_{\O}\psi_{n}(u^{\epsilon}(t))dx\Big)^{q}+C_{T,p,q}\Bigg(1+\E\int_{0}^{T}\Big(\int_{\O}\psi_{n}(u^{\epsilon}(s))dx\Big)^{q}ds\Bigg)},
 	\end{array}.
 \end{equation*}
Then, taking the supremum, the $q$-th power and the expection on both sides of \eqref{base_Lp}, we obtain that 
\begin{equation*}
	\begin{array}{ll}
		&\ds{\E\sup_{t\in[0,T]}\Big(\int_{\O}\psi_{n}(u^{\epsilon}(t))dx\Big)^{q}+\E\Bigg(\int_{0}^{T}\int_{\O}\psi_{n}''(u^{\epsilon}(s,x))b_{\epsilon}(u^{\epsilon}(s,x))\abs{\nabla u^{\epsilon}(s,x)}^{2}dxds\Bigg)^{q} }\\
		\vs
		&\ds{\leq \Big(\int_{\O}\psi(u_{0}(x))dx\Big)^{q}+\frac{1}{2}\E\sup_{t\in[0,T]}\Big(\int_{\O}\psi_{n}(u^{\epsilon}(t))dx\Big)^{q}+C_{T,p,q}\Bigg(1+\E\int_{0}^{T}\Big(\int_{\O}\psi_{n}(u^{\epsilon}(s))dx\Big)^{q}ds\Bigg) },
	\end{array}
\end{equation*}
uniformly in $\epsilon>0$. Hence, applying the Gr{\"o}nwall lemma and then letting $n\to+\infty$ yields \eqref{approx_energy_Lp}.

\end{proof}

\subsubsection{Proof of Theorem \ref{weak_nondeg}}

	From Corollary \ref{uniqueness_kinetic}, we know there is at most one weak solution for equation \eqref{limit_para} in $L^{2\theta}(\Omega;L^{\infty}([0,T];L^{2\theta}(\O)))\bigcap L^{2}(\Omega;L^{2}([0,T];H^{1}))$. Hence, if we show that there exists a probabilistically weak solution for equation \eqref{limit_para}
	\begin{equation*}
		\big(\hat{\Omega},\hat{\F},\{\hat{\F}\}_{t},\hat{\P},\hat{w},\hat{u}\big)
	\end{equation*}  
	such that $\hat{u}\in L^{2\theta}(\hat{\Omega};L^{\infty}([0,T];L^{2\theta}(\O)))\bigcap L^{2}(\hat{\Omega};L^{2}([0,T];H^{1}))$, the existence and uniqueness of a weak solution for equation \eqref{limit_para} follows by applying the Gy{\"o}ngy-Krylov characterization of convergence in probability (see \cite{gyongy1996}).
	
	Fix $u_{0}\in L^{2\theta}(\O)$. According to Proposition \ref{weak_nondeg_approx}, for every $\epsilon>0$ there exists a unique weak solution $u^{\epsilon}$ to equation \eqref{weak_nondeg_eq_approx}, and
	\begin{equation}\label{uniform_bound1}
		\sup_{\epsilon\in(0,1)}\Bigg(\E\sup_{t\in[0,T]}\norm{u^{\epsilon}(t)}_{L^{2\theta}(\O)}^{2\theta}+\E\int_{0}^{T}\int_{\O}\big(1+\abs{u^{\epsilon}(s,x)}^{2}\big)^{\theta-1}b_{\epsilon}(u^{\epsilon}(s,x))\abs{\nabla u^{\epsilon}(s,x)}^{2}dxds\Bigg)<\infty.
	\end{equation}

 We will divide our proof into two steps as follows.\\

{\em Step 1:} Fix $u_{0}\in L^{2\theta}(\O)$. The family of probability measures $\{\L(u^{\epsilon})\}_{\epsilon\in(0,1)}$ is tight in 
\begin{equation*}
	L^{p}([0,T];H)\bigcap C([0,T];H^{-\delta}),
\end{equation*}
 for every $p>1$ and $\delta>0$.\\ 

	{\em Proof of Step 1:} 	 
	For every $h\in(0,T)$ and $t\in[0,T-h]$ we have 
	\begin{equation*}
		u^{\epsilon}(t+h)-u^{\epsilon}(t)=\int_{t}^{t+h}\Delta \mathfrak{b}_{\epsilon}(u^{\epsilon}(s))ds+\int_{t}^{t+h}\sigma(u^{\epsilon}(s))dw(s).
	\end{equation*}
	From \eqref{polynomial_B}, for every $\epsilon\in(0,1)$ and every $t\in[0,T-h]$ we have
	\begin{equation*}
		\E\int_{t}^{t+h}\norm{\Delta \mathfrak{b}_{\epsilon}\big(u^{\epsilon}(r)\big)}_{H^{-2}}dr\leq ch\ \E\sup_{r\in[0,T]}\norm{\mathfrak{b}_{\epsilon}(u^{\epsilon}(r))}_{H}\leq ch\Big(1+\E\sup_{t\in[0,T]}\norm{u^{\epsilon}(r)}_{L^{2\theta}(\O)}^{\theta}\Big),
	\end{equation*}
	and due to \eqref{sgfine2}, for any $a>2$ it follows that
	\begin{equation*}
		\E\norm{\int_{t}^{t+h}\sigma(u^{\epsilon}(r))dw(r)}_{H}^{a}\leq c\E\Big(\int_{t}^{t+h}\norm{\sigma(u^{\epsilon}(r))}_{L_{2}(\mathcal{U},H)}^{2}dr\Big)^{\frac{a}{2}}=  ch^{\frac{a}{2}}\Big(1+\E\sup_{r\in[0,T]}\norm{u^{\epsilon}(r)}_{H}^{a}\Big),
	\end{equation*}
	Hence, as a consequence of the Kolmogorov continuity theorem (see \cite[Theorem 3.3]{daprato2014}), we obtain that for any $\lambda\in(0,1/2)$,  
	\begin{equation}\label{tight2}
		\sup_{\epsilon\in(0,1)}\E\norm{u^{\epsilon}}_{C^{\lambda}([0,T];H^{-2})}<\infty,
	\end{equation}
	and since \eqref{uniform_bound1} implies, in particular, that 
	\begin{equation*}
		\sup_{\epsilon\in(0,1)}\E\norm{u^{\epsilon}}_{C([0,T];H)}<\infty,
	\end{equation*}
	due to \cite[Theorem 7]{simon1986} this implies that $\big\{\mathcal{L}(u^{\epsilon})\big\}_{\epsilon\in(0,1)}$ is tight in $C([0,T];H^{-\delta})$, for every $\delta>0$. Moreover, thanks again to \eqref{uniform_bound1}, by proceeding as in \cite[Proof of Theorem 5.1]{cerraixi}, we obtain that the family $\big\{\mathcal{L}(u^{\epsilon})\big\}_{\epsilon\in(0,1)}$ is tight in $L^{p}([0,T];H)$, for every $p>1$.
	
	In what follows, we define
	\begin{equation*}
		\mathcal{K}(T):=\Big[L^{2}([0,T];H)\bigcap C([0,T];H^{-1})\Big]\times C([0,T];\mathcal{U}_{0}),
	\end{equation*}
	where $\mathcal{U}_{0}$ is any Hilbert space such that the embedding $\mathcal{U}\hookrightarrow\mathcal{U}_{0}$ is Hilbert-Schmidt.\\
	
	{\em Step 2:} There exists a filtered probability space $\big(\hat{\Omega},\hat{\F},\{\hat{\F}\}_{t},\hat{\P}\big)$, a cylindrical Wiener process $\hat{w}$ associated with $\{\hat{\F}\}_{t}$ and a process 
\begin{equation*}
	\hat{u}\in L^{2\theta}(\hat{\Omega};L^{\infty}([0,T];L^{2\theta}(\O)))\bigcap L^{2}(\hat{\Omega};L^{2}([0,T];H^{1}))
\end{equation*}
such that $\mathfrak{b}(\hat{u})\in L^{2}(\hat{\Omega};L^{2}([0,T];H^{1}))$ and for every $\varphi\in C^{\infty}_{c}(\O)$
\begin{equation*}
	\Inner{\hat{u}(t),\varphi}=\Inner{u_{0},\varphi}_{H}+\int_{0}^{t}\Inner{\mathfrak{b}(\hat{u}(s)),\Delta\varphi}_{H}ds+\int_{0}^{t}\Inner{\varphi,\sigma(\hat{u}(s))d\hat{w}(s)}_{H},\ \ \ \hat{\P}\text{-a.s.}. 
\end{equation*}

	{\em Proof of Step 2:}
	  By the tightness of $\{\L(u^{\epsilon},w)\}_{\epsilon\in(0,1)}$ in $\mathcal{K}(T)$, there exists a sequence $\epsilon_{n}\downarrow0$ such that $\L(u^{\epsilon_{n}},w)$ is weakly convergent in $\mathcal{K}(T)$. Now let $u$ be any weak limit point for $\{u^{\epsilon_{n}}\}$, then due to Skorohod's Theorem, there exist a sequence of $\mathcal{K}(T)$-valued random variables $\mathcal{Y}_{n}=(\hat{u}_{n},\hat{w}_{n})$, $\mathcal{Y}=(\hat{u},\hat{w})$, and a probabilty space $\big(\hat{\Omega},\hat{\mathcal{F}},\{\hat{\mathcal{F}}_{t}\}_{t\in[0,T]},\hat{\P}\big)$ such that
	\begin{equation*}
	\L(\mathcal{Y})=\L(u,w),	\ \ \ \ \mathcal{L}(\mathcal{Y}_{n})=\mathcal{L}(u_{n},w),\ \ \ n\in\mathbb{N},
	\end{equation*} 
	and 
	\begin{equation}\label{convergence_almost_surely}
		\lim_{n\to\infty}\Big(\norm{\hat{u}_{n}-\hat{u}}_{L^{2}([0,T];H)}+\norm{\hat{u}_{n}-\hat{u}}_{C([0,T];H^{-1})}+\norm{\hat{w}_{n}-\hat{w}}_{C([0,T];\mathcal{U}_{0})}\Big)=0,\ \ \ \hat{\P}\text{-a.s.}.
	\end{equation}
	This, together with the uniform bound \eqref{uniform_bound1} and the fact that $b_{\epsilon}\geq b_{0}$, yields
	\begin{equation*}
		\hat{u}\in L^{2\theta}(\hat{\Omega};L^{\infty}([0,T];L^{2\theta}(\O)))\bigcap L^{2}(\hat{\Omega}; L^{2}([0,T];H^{1})),
	\end{equation*}
	with
	\begin{equation*}
		\hat{\E}\int_{0}^{T}\int_{\O}\big(1+\abs{\hat{u}(s,x)}^{2}\big)^{\theta-1}\abs{\nabla \hat{u}(s,x)}^{2}dxds<\infty,
	\end{equation*}
	which implies that $\mathfrak{b}(\hat{u})\in L^{2}(\hat{\Omega};L^{2}([0,T];H^{1}))$. And moreover, we may extract a subsequence $\{\hat{u}_{n}\}$ (not relabeled) such that
	\begin{equation}\label{convergence_Lp1}
		\lim_{n\to\infty}\ \hat{u}_{n}=\hat{u},\ \ \text{in}\ \  L^{2}(\hat{\Omega};L^{2}([0,T];L^{2}(\O))).
	\end{equation}
	Now, we denote $\mathfrak{b}_{n}:=\mathfrak{b}_{\epsilon_{n}}$, then for every $\varphi\in C^{\infty}_{0}(\mathcal{O})$,
	\begin{equation}\label{weak_test_approx}
	\Inner{\hat{u}_{n}(t),\varphi}_{H} =\Inner{u_{0},\varphi}_{H}+\int_{0}^{t}\Inner{\mathfrak{b}_{n}(\hat{u}_{n}(s)),\Delta\varphi}_{H}ds +\int_{0}^{t}\Inner{\varphi,\sigma(\hat{u}_{n}(s))d\hat{w}_{n}(s)}_{H},\ \ \ \hat{\P}\text{-a.s.}
	\end{equation}
	In order to take the limit as $n\to\infty$ of both sides in \eqref{weak_test_approx}, we only have to study the limit of the second term on the left-hand side, and the rest terms can be treated by using the general argument as in \cite[Proof of Theorem 7.1]{cerraixie}. We will show that 
	\begin{equation}
		\lim_{n\to\infty}\E\sup_{r\in[0,t]}\Big\lvert \int_{0}^{r}\Inner{\mathfrak{b}_{n}(\hat{u}_{n}(s))-\mathfrak{b}(\hat{u}(s)),\Delta\varphi}_{H}ds\Big\rvert=0,\ \ \ t\in[0,T].
	\end{equation}
	Indeed, thanks to \eqref{convergence_B} and \eqref{diff_B}, for every $t\in[0,T]$ we have
	\begin{equation*}
		\begin{array}{ll}
			&\ds{\sup_{r\in[0,t]}\Big\lvert \int_{0}^{r}\Inner{\mathfrak{b}_{n}(\hat{u}_{n}(s))-\mathfrak{b}(\hat{u}(s)),\Delta\varphi}_{H}ds\Big\rvert  }\\
			\vs
			&\ds{\leq \norm{\Delta\varphi}_{\infty}\Bigg(\int_{0}^{t}\norm{\mathfrak{b}_{n}(\hat{u}_{n}(s))-\mathfrak{b}_{n}(\hat{u}(s))}_{L^{1}(\O)}ds +\int_{0}^{t}\norm{\mathfrak{b}_{n}(\hat{u}(s))-\mathfrak{b}(\hat{u}(s))}_{L^{1}(\O)}ds\Bigg) }\\
			\vs
			&\ds{\leq c\norm{\Delta\varphi}_{\infty}\int_{0}^{t}\norm{\hat{u}_{n}(s)-\hat{u}(s)}_{H}\Big(1+\norm{\hat{u}_{n}(s)}_{L^{2(\theta-1)}(\O)}^{\theta-1}+\norm{\hat{u}(s)}_{L^{2(\theta-1)}(\O)}^{\theta-1}\Big)ds }\\
			\vs
			&\ds{\quad +c\epsilon_{n}\norm{\Delta\varphi}_{\infty}\int_{0}^{t}\Big(1+\norm{\hat{u}(s)}_{L^{2\theta}(\O)}^{\theta}\Big)\Big(1+\norm{\hat{u}(s)}_{L^{2(\theta-1)}(\O)}^{\theta-1}\Big)ds},
		\end{array}
	\end{equation*}
	and thus as $n\to\infty$, it follows from \eqref{uniform_bound1} and \eqref{convergence_Lp1} that
	\begin{equation*}
		\begin{array}{ll}
			&\ds{\E\sup_{r\in[0,t]}\Big\lvert \int_{0}^{r}\Inner{\mathfrak{b}_{n}(\hat{u}_{n}(s))-\mathfrak{b}(\hat{u}(s)),\Delta\varphi}_{H}ds\Big\rvert }\\
			\vs
			&\ds{\leq c\norm{\Delta\varphi}_{\infty}\Bigg(\E\int_{0}^{t}\norm{\hat{u}_{n}(s)-\hat{u}(s)}_{H}^{2}ds\Bigg)^{\frac{1}{2}}\Bigg(\E\int_{0}^{t}\Big(1+\norm{\hat{u}_{n}(s)}_{L^{2(\theta-1)}(\O)}^{2(\theta-1)}+\norm{\hat{u}(s)}_{L^{2(\theta-1)}(\O)}^{2(\theta-1)}\Big)ds\Bigg)^{\frac{1}{2}} }\\
			\vs
			&\ds{\quad +c\epsilon_{n}\norm{\Delta\varphi}_{\infty}\Bigg(\E\int_{0}^{t}\Big(1+\norm{\hat{u}(s)}_{L^{2\theta}(\O)}^{2\theta}\Big)ds\Bigg)^{\frac{1}{2}}\Bigg(\E\int_{0}^{t}\Big(1+\norm{\hat{u}(s)}_{L^{2(\theta-1)}(\O)}^{2(\theta-1)}\Big)ds\Bigg)^{\frac{1}{2}}\to0 }.
		\end{array}
	\end{equation*}
	Therefore, by proceeding as in \cite{cerraixie}, we may extract a subsequence and take the limit as $n\to\infty$ on both sides of \eqref{weak_test_approx} to obtain that $\hat{u}$ satisfies \eqref{weak_test}, with $w$ replaced by $\hat{w}$. Due to what we have seen above, there exists a unique weak solution $u$ of equation \eqref{limit_para}. Moreover, it can be verified that $u$ satisfies \eqref{energy_Lp} by using the same argument as in the proof of Proposition \ref{weak_nondeg_approx}. Finally, the fact that $u\in C([0,T];L^{2\theta}(\O))$, $\P$-a.s., is a direct consequence of Lemma \ref{continuity_Lp}.

\bigskip

\subsection{The degenerate case}
\label{sec_degenerate}

Throughout this subsection we assume Hypothesis \ref{Hypothesis3}\ref{deg_condition} holds. Given any initial data $u_{0}\in L^{2\theta_{2}}(\O)$, the well-posdeness for \eqref{limit_para} is given in the following theorem.

\begin{thm}\label{kin_deg}Assume Hypotheses \ref{Hypothesis1}, \ref{Hypothesis2} and \ref{Hypothesis3}\ref{deg_condition}. For every $u_{0}\in L^{2\theta_{2}}(\mathcal{O})$ and every $T>0$, there exists a unique kinetic solution $u\in C([0,T];L^{2\theta_{2}}(\mathcal{O}))$, $\P$-a.s., of equation \eqref{limit_para}. For every $p\in[1,2\theta_{2}]$ and $q\in[1+\infty)$, there is a constant $C_{T,p,q}>0$ such that
	\begin{equation}\label{energy_kinetic_Lp}
		\E\ \esssup_{t\in[0,T]}\norm{u(t)}_{L^{p}(\mathcal{O})}^{pq}\leq C_{T,p,q}\Big(1+\norm{u_{0}}_{L^{p}(\mathcal{O})}^{pq}\Big).
	\end{equation}
	Moreover, if $u_{1},u_{2}$ are kinetic solutions of equation \eqref{limit_para} with initial data $u_{1,0},u_{2,0}\in L^{2\theta_{2}}(\O)$, respectively, we have
	\begin{equation}\label{L1_contraction_kinetic_Lp}
		\E\norm{u_{1}(t)-u_{2}(t)}_{L^{1}(\mathcal{O})}\leq \norm{u_{1,0}-u_{2,0}}_{L^{1}(\mathcal{O})},\ \ \ t\in[0,T].
	\end{equation}
\end{thm}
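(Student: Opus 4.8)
The plan is to realize the kinetic solution as a limit of weak solutions of non-degenerate approximations, and then to read off uniqueness and the $L^{1}$-contraction directly from the comparison principle already proved in Section \ref{L1}. First I would regularize the diffusion: for $\tau\in(0,1)$ set $b_{\tau}:=b+\tau$. Under Hypothesis \ref{Hypothesis3}\ref{deg_condition} the function $b_{\tau}$ satisfies the non-degenerate condition \eqref{nondeg_growth} with $\theta=\theta_{2}$ and lower bound $b_{0}=\tau$, and it still satisfies Hypothesis \ref{Hypothesis2}; hence Theorem \ref{weak_nondeg} provides, for each $\tau$, a unique weak solution $u^{\tau}$ of \eqref{limit_para} with $b$ replaced by $b_{\tau}$, with a.s.\ continuous trajectories in $L^{2\theta_{2}}(\O)$. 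Applying the energy estimate \eqref{energy_Lp} to $b_{\tau}$ gives bounds on $\E\sup_{t}\norm{u^{\tau}(t)}_{L^{2\theta_{2}}(\O)}^{2\theta_{2}}$ and on $\E\int_{0}^{T}\int_{\O}b_{\tau}(u^{\tau})\abs{\nabla u^{\tau}}^{2}\,dx\,ds$ that are uniform in $\tau$; the latter controls $\nabla_{x}\int_{0}^{u^{\tau}}\sqrt{b_{\tau}(r)}\,dr$ in $L^{2}$ and thus the parabolic dissipation measures. The essential difficulty is that, because $b_{\tau}\geq\tau\to0$, none of these estimates yields a uniform $H^{1}$ bound for $u^{\tau}$.

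I would overcome this through the stochastic averaging lemma of \cite{gess18}, and I expect this to be the main obstacle of the whole proof. Each $u^{\tau}$ is a kinetic solution, so $h^{\tau}=\mathbf{1}_{u^{\tau}>\xi}$ solves a kinetic equation whose symbol is $\mathcal{L}^{\tau}(iu,in,\xi)=iu+(b(\xi)+\tau)n^{2}$; since $b$ obeys Hypothesis \ref{Hypothesis3}\ref{deg_condition}, the non-degeneracy \eqref{symbol_condition} holds with $\alpha=1/(\theta_{1}-1)$, $\beta=2$, $\vartheta(\xi)=1+\abs{\xi}^{\theta_{1}-2}$, and, since $\{b(\xi)+\tau\leq\delta/n^{2}\}\subseteq\{b(\xi)\leq\delta/n^{2}\}$ while $\partial_{\xi}\mathcal{L}^{\tau}=b'(\xi)n^{2}$ is unaffected by the constant $\tau$, condition \eqref{symbol_condition} in fact holds uniformly in $\tau\in(0,1)$. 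As the averaging lemma is stated on the torus, I would first localize using a partition of unity subordinate to the covering $\{B_{i}\}_{i=0}^{N}$ of $\overline{\O}$ by balls of $\mathfrak{B}$ and flatten the boundary via the charts of $\mathfrak{B}$ together with a boundary layer sequence, transferring the localized kinetic equations to $\mathbb{T}^{d}$ as in \cite{frid22}, \cite{fridnew}. The averaging lemma then produces a bound for $\{u^{\tau}\}$, uniform in $\tau$, in a space $L^{1}([0,T];W^{s,1}(\O))$ with $s\in(0,1)$, which embeds compactly into $L^{1}([0,T]\times\O)$; together with the moment bounds above and the time-regularity carried by the equation, this gives tightness of $\{\mathcal{L}(u^{\tau})\}$.

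Given tightness, I would mirror the stochastic-compactness argument used for Theorem \ref{weak_nondeg}: pick $\tau_{n}\downarrow0$, apply Skorohod's theorem to get versions $\hat{u}_{n}\to\hat{u}$ almost surely on a new probability space, and upgrade (by uniform integrability) to convergence in $L^{1}([0,T]\times\O)$. This strong convergence lets me pass to the limit in the nonlinearity, identifying the weak-$L^{2}$ limit of $\nabla_{x}\int_{0}^{\hat{u}_{n}}\sqrt{b_{\tau_{n}}(r)}\,dr$ with $\nabla_{x}\int_{0}^{\hat{u}}\sqrt{b(r)}\,dr$, so that $\hat{u}$ inherits the integrability $\text{div}\int_{0}^{\hat{u}}\sqrt{b(\xi)}\,d\xi\in L^{2}$ and the chain rule \eqref{chain_rule_kin} (trivial for the $H^{1}$ approximants and stable under the convergence). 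Extracting a weak-$\ast$ limit $m\geq n_{1}$ of the dissipation measures $n_{1}^{\tau_{n}}$ by lower semicontinuity, and passing to the limit in \eqref{kinetic_cond} for $h^{\tau_{n}}$ — the linear and stochastic terms being handled exactly as in Step 2 of the proof of Theorem \ref{weak_nondeg} — shows that $(\mathbf{1}_{\hat{u}>\xi},m)$ satisfies \eqref{kinetic_cond}, so $\hat{u}$ is a kinetic solution; the Gy\"ongy--Krylov criterion \cite{gyongy1996} then returns the solution on the original space.

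It remains to collect the stated properties. The estimate \eqref{energy_kinetic_Lp} for all $p\in[1,2\theta_{2}]$ and $q\in[1,\infty)$ follows from the uniform bound \eqref{approx_energy_Lp} by Fatou's lemma and lower semicontinuity under the convergence above, and almost-sure continuity of trajectories in $L^{2\theta_{2}}(\O)$ is then immediate from Lemma \ref{continuity_Lp}. Finally, since every kinetic solution is in particular a generalized kinetic solution, uniqueness and the $L^{1}$-contraction \eqref{L1_contraction_kinetic_Lp} are exactly the content of Corollary \ref{uniqueness_kinetic}, itself a consequence of the comparison principle in Proposition \ref{L1_contraction}.
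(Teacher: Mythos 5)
Your overall architecture coincides with the paper's: regularize via $b_{\tau}=b+\tau$, invoke Theorem \ref{weak_nondeg} and the uniform estimates \eqref{approx_energy_Lp} (which indeed do not degenerate as the lower bound $\tau\to0$), verify the non-degeneracy \eqref{symbol_condition} for the symbol $\mathcal{L}^{\tau}$ uniformly in $\tau$, use the averaging lemma of \cite{gess18} to compensate for the missing $H^{1}$ bound, run the Skorokhod/Gy\"ongy--Krylov compactness scheme, identify the kinetic measure as a weak-$\ast$ limit, and deduce uniqueness and \eqref{L1_contraction_kinetic_Lp} from Corollary \ref{uniqueness_kinetic}. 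The last paragraph of your proposal is exactly what the paper does.

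The one step where your plan diverges, and where it would run into trouble, is the localization needed to apply the averaging lemma. You propose to cover $\overline{\O}$ by the balls of $\mathfrak{B}$, flatten the boundary via the $C^{2}$ charts, and transfer the kinetic equation near $\partial\O$ to the torus. But the averaging lemma of \cite{gess18} is tied to the constant-in-$x$ symbol $\mathcal{L}(iu,in,\xi)=iu+b(\xi)n^{2}$; a $C^{2}$ change of variables turns $b(\xi)\Delta_{x}$ into a second-order operator with $x$-dependent coefficients, so the symbol computation and the non-degeneracy condition \eqref{symbol_condition} no longer apply in the flattened coordinates. The paper avoids this entirely: it localizes only with interior cutoffs $\psi\in C^{\infty}_{c}(\O)$ (multiplied by a temporal cutoff), extends $\psi\chi^{\tau}$ periodically, and obtains a uniform bound in $L^{r}(\Omega\times[0,T];W^{s,r}(V))$ only for $V\subset\subset\O$ (Lemma \ref{averaging}), with the specific exponents $s<1/(3(\theta_{1}^{2}-1))$ and $r<(4\theta_{2}-3)/(2\theta_{1}-1)$ rather than the $W^{s,1}$ space you quote. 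The absence of regularity up to $\partial\O$ is then compensated in the tightness argument: relative compactness in $L^{p}([0,T];L^{r}_{\mathrm{loc}}(\O))$ via an exhaustion $\O_{i}\subset\subset\O$ gives a.e.\ convergence on $[0,T]\times\O$, and the uniform bound in $L^{\infty}([0,T];L^{2\theta_{2}}(\O))$ with $2\theta_{2}>r$ upgrades this to strong convergence in $L^{r}([0,T]\times\O)$ by the Vitali convergence theorem. You should replace your boundary-flattening step by this interior localization plus Vitali argument; with that substitution your proof matches the paper's.
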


\subsubsection{The approximation problem}

Fix $u_{0}\in L^{2\theta_{2}}(\O)$ and assume Hypotheses \ref{Hypothesis1}, \ref{Hypothesis2} and \ref{Hypothesis3}\ref{deg_condition}. For every $\tau\in(0,1)$, we consider the following approximation problem for \eqref{limit_para}:
\begin{equation}\label{kin_deg_approx_eq}
	\le\{\begin{array}{l}
		\ds{\partial_{t}u^{\tau}(t,x)=\text{div}\big(b(u^{\tau}(t,x))\nabla u^{\tau}(t,x)\big)+\tau\Delta u^{\tau}(t,x)+\sigma(u^{\tau}(t,x))\partial_{t}w(t,x), }\\[10pt]
		\ds{u^{\tau}(0,x)=u_{0},\ \ \ u^{\tau}(t,\cdot)|_{\partial\mathcal{O}}=0 },
	\end{array}\r.
\end{equation}
Denote $b^{\tau}(r):=b(r)+\tau$, $r\in\mathbb{R}$, then it is clear that $b^{\tau}$ satisfies Hypothesis \ref{Hypothesis3}\ref{nondeg_condition}. Hence, by Theorem \ref{weak_nondeg} we know that for every $\tau\in(0,1)$ there exists a unique weak solution $u^{\tau}\in L^{2\theta_{2}}(\Omega;L^{\infty}([0,T];L^{2\theta_{2}}(\O)))\bigcap L^{2}(\Omega;L^{2}([0,T];H^{1}))$ of equation \eqref{limit_para}, and moreover, for all $p\in[1,2\theta_{2}]$ and $q\in[1,+\infty)$ there exists some constant $C_{T,p,q}>0$ such that
 \begin{equation}\label{kin_energy_Lp_approx}
 	\begin{array}{ll}
 	&\ds{\sup_{\tau\in(0,1)}\Bigg[\E\sup_{t\in[0,T]}\norm{u^{\tau}(t)}_{L^{p}(\O)}^{pq}+\E\Big(\int_{0}^{T}\int_{\O}\big(1+\abs{u^{\tau}(s,x)}^{2}\big)^{\frac{p}{2}-1}b^{\tau}(u(s,x))\abs{\nabla u^{\tau}(s,x)}^{2}dxds\Big)^{q}\Bigg]}\\
 	\vs
 	&\ds{\quad\quad\quad\leq C_{T,p,q}\Big(1+\norm{u_{0}}_{L^{p}(\O)}^{pq}\Big)}.
 	\end{array}
 \end{equation}
Denoting $h^{\tau}(t,x,\xi):=\mathbf{1}_{u^{\tau}(t,x)>\xi}$, we know the following kinetic formulation holds in the sense of $\mathcal{D}'(\O\times\mathbb{R})$:
\begin{equation}\label{kinetic_form}
	dh^{\tau}-(b(\xi)+\tau)\Delta h^{\tau}dt=\sum_{i=1}^{\infty}\delta_{u^{\tau}=\xi}\sigma_{i}(x,\xi)d\beta_{i}(t)+\partial_{\xi}q_{\tau}dt,
\end{equation}
where 
\begin{equation*}
	q_{\tau}(t,x,\xi)=m_{\tau}-\frac{1}{2}\delta_{u^{\tau}=\xi}\Sigma^{2}(x,\xi),\ \ \ \ m_{\tau}=n_{1,\tau}+n_{2,\tau},
\end{equation*}
and 
\begin{equation*}
	dn_{1,\tau}(t,x,\xi)=b(\xi)\abs{\nabla u^{\tau}}^{2}d\delta_{u^{\tau}=\xi}dxdt,\ \ \ \ dn_{2,\tau}(t,x,\xi)=\tau\abs{\nabla u^{\tau}}^{2}d\delta_{u^{\tau}=\xi}dxdt.
\end{equation*}
Then, if we define $\chi^{\tau}(t,x,\xi):=h^{\tau}(t,x,\xi)-\mathbf{1}_{0>\xi}$, then $\chi^{\tau}=\chi^{\tau}(t,x,\xi)$ satisfies that
\begin{equation*}
	\partial_{t}\chi^{\tau}-(b(\xi)+\tau)\Delta\chi^{\tau}=\partial_{\xi}q_{\tau}-\sum_{i=1}^{\infty}(\partial_{\xi}\chi^{\tau})\sigma_{i}\dot{\beta}_{i}+\sum_{i=1}^{\infty}\delta_{0}\sigma_{i}\dot{\beta}_{i}.
\end{equation*}
Hence, for every $\varphi\in C_{c}^{\infty}([0,T)\times\O)$, we have $\varphi\chi^{\tau}$ satisfies 
\begin{equation*}
	\begin{array}{ll}
		&\ds{	\partial_{t}(\varphi\chi^{\tau})-(b(\xi)+\tau)\Delta(\varphi\chi^{\tau})=\partial_{\xi}(\varphi q_{\tau})-\sum_{i=1}^{\infty}(\partial_{\xi}(\varphi\chi^{\tau}))\sigma_{i}\dot{\beta}_{i}+\sum_{i=1}^{\infty}\varphi\delta_{0}\sigma_{i}\dot{\beta}_{i}}\\
		\vs
		&\ds{\quad\quad\quad\quad\quad\quad\quad\quad\quad\quad\quad\quad\quad 
		-(b(\xi)+\tau)\big(\chi^{\tau}\Delta\varphi+2\nabla\chi^{\tau}\cdot\nabla\varphi\big) }
	\end{array}
\end{equation*}

Now we denote by $\L^{\tau}$ the symbol assiciated to the kinetic form \eqref{kinetic_form}
\begin{equation*}
	\L^{\tau}(iu,in,\xi):=iu+(b(\xi)+\tau)n^{2}
\end{equation*}
For $J,\delta>0$, since for some constant $C>0$,
\begin{equation*}
	\big\{\xi\in\mathbb{R}:\mathcal{L}^{\tau}(iu,in,\xi)\leq \delta\big\}\subseteq \big\{\xi\in\mathbb{R}:\L(iu,in,\xi)\leq C\delta\big\},
\end{equation*}
where $\L$ is the symbol associated to the kinetic form for \eqref{limit_para}. From what we have discussed in Section \ref{sec_diffusion}, we have for every $\eta\in C^{\infty}_{c}(\mathbb{R})$
\begin{equation*}
	\omega_{\L^{\tau}}^{\eta}(J;\delta)\leq \omega_{\L}^{\eta}(J;C\delta)\lesssim_{\eta}\Big(\frac{\delta}{J^{2}}\Big)^{\frac{1}{\theta_{1}-1}},
\end{equation*}
and moreover, since $\L_{\xi}^{\tau}=\L_{\xi}$, it follows that 
\begin{equation*}
	\sup_{u\in\mathbb{R},n\in\mathbb{Z},\abs{n}\sim J}\sup_{\xi\in\text{supp}(\eta)}\frac{\abs{\L^{\tau}_{\xi}}(iu,in,\xi)}{1+\abs{\xi}^{\theta_{1}-2}}=\sup_{u\in\mathbb{R},n\in\mathbb{Z},\abs{n}\sim J}\sup_{\xi\in\text{supp}(\eta)}\frac{\abs{\L_{\xi}}(iu,in,\xi)}{1+\abs{\xi}^{\theta_{1}-2}}\lesssim_{\eta}J^{2}.
\end{equation*}
Consequently, we obtain that $\L^{\tau}$ satisfies the nondegeneracy condition uniformly in $\tau\in(0,1)$.

 Now, we choose $\varphi(t,x)=\psi(x)\phi(x)$, where $\psi\in C^{\infty}_{c}(\O)$, $\phi=\phi^{\lambda}\in C^{1}([0,\infty))$ such that $0\leq \phi\leq 1$, $\phi\equiv1$ on $[0,T-\lambda]$, $\phi\equiv0$ on $[T,\infty)$ and $\abs{\partial_{t}\phi}\leq \lambda^{-1}$ for some $\lambda\in(0,1)$. This localization allows us to extend $\varphi\chi^{\tau}$ periodically in the space variable $x$ with a period $\Pi\supset \text{supp}(\psi)$ and then apply the averaging lemma in \cite{gess18}. The only difference from the kinetic equation treated in \cite{gess18} is 
the appearance of the extra term 
\begin{equation*}
	(b(\xi)+\tau)\big(\chi^{\tau}\Delta\varphi+2\nabla\chi^{\tau}\cdot\nabla\varphi\big)=	(b(\xi)+\tau)\big(\chi^{\tau}\Delta\psi+2\nabla\chi^{\tau}\cdot\nabla\psi\big)\phi,
\end{equation*}
which is created because of $\psi$, and this can be treated as the term $\chi\partial_{t}\phi$ in their argument. Therefore, by applying the averaging lemma (see \cite[Theorem 3.1]{gess18}), we obtain the following localized version of the averaging result.
 \begin{lem}
 	\label{averaging}
 	 Assume Hypotheses \ref{Hypothesis1}, \ref{Hypothesis2} and \ref{Hypothesis3}\ref{deg_condition}. Let $u^{\tau}$ be a weak solution for equation \eqref{kin_deg_approx_eq}, and
 	\begin{equation}\label{regularity}
 		s<\frac{1}{3(\theta_{1}^{2}-1)},\ \ \ \ \text{and}\ \ \ \ r<\frac{4\theta_{2}-3}{2\theta_{1}-1}.
 	\end{equation}
	Then for every $\psi\in C^{\infty}_{c}(\O)$, 
	\begin{equation}
		\norm{\psi u^{\tau}}_{L^{r}(\Omega\times[0,T];W^{s,r}(\O))}\leq C_{\psi}\Big(1+\norm{u_{0}}_{L^{2\theta_{1}-1}(\O)}^{2\theta_{1}-1}\Big),
	\end{equation}
	uniformly in $\tau\in(0,1)$. In particular, for any $V\subset\subset \O$
	\begin{equation}
		\norm{u^{\tau}}_{L^{r}(\Omega\times[0,T];W^{s,r}(V))}\leq C_{V}\Big(1+\norm{u_{0}}_{L^{2\theta_{1}-1}(\O)}^{2\theta_{1}-1}\Big),
	\end{equation}
	uniformly in $\tau\in(0,1)$.
\end{lem}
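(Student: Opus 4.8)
The plan is to apply the stochastic averaging lemma of Gess and Hofmanova (\cite[Theorem 3.1]{gess18}) to the localized and spatially periodized kinetic function, exploiting the uniform-in-$\tau$ nondegeneracy of $\L^{\tau}$ already verified above together with the uniform energy bounds \eqref{kin_energy_Lp_approx}. The starting point is the elementary identity $u^{\tau}=\int_{\R}\chi^{\tau}\,d\xi$ with $\chi^{\tau}=h^{\tau}-\mathbf{1}_{0>\xi}$, so that $\psi u^{\tau}=\int_{\R}\psi\chi^{\tau}\,d\xi$ is precisely an average in the kinetic variable $\xi$ of the kinetic function; controlling the fractional Sobolev regularity of such $\xi$-averages is exactly what the averaging lemma delivers.

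First I would record the kinetic equation satisfied by $v^{\tau}:=\varphi\chi^{\tau}=\psi\phi\,\chi^{\tau}$, namely the one displayed just above the statement, and extend it periodically in $x$ over a period box $\Pi\supset\text{supp}(\psi)$; since $\psi$ is compactly supported in $\O$ this extension is legitimate and produces a genuine periodic kinetic equation to which \cite[Theorem 3.1]{gess18} applies. Its symbol is $\L^{\tau}(iu,in,\xi)=iu+(b(\xi)+\tau)n^{2}$, purely parabolic (there is no transport term), whose nondegeneracy condition \eqref{symbol_condition} holds with $\alpha=1/(\theta_{1}-1)$, $\beta=2$ and $\vartheta(\xi)=1+\abs{\xi}^{\theta_{1}-2}$, uniformly in $\tau\in(0,1)$, as established above.

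The core of the argument is to check that every forcing term on the right-hand side fits the hypotheses of the averaging lemma with bounds that are uniform in $\tau$. The martingale term $\sum_{i}\delta_{u^{\tau}=\xi}\sigma_{i}\dot\beta_{i}$, together with its localized counterpart $\sum_{i}\varphi\delta_{0}\sigma_{i}\dot\beta_{i}$, is handled through the growth bound \eqref{sgfine2} for $\Sigma^{2}$ under Hypothesis \ref{Hypothesis1}; the measure term $\partial_{\xi}q_{\tau}$ with $q_{\tau}=m_{\tau}-\tfrac{1}{2}\delta_{u^{\tau}=\xi}\Sigma^{2}$ and $m_{\tau}=n_{1,\tau}+n_{2,\tau}$ is controlled by the parabolic dissipation contained in \eqref{kin_energy_Lp_approx}, the key point being that the extra viscosity contributes $n_{2,\tau}=\tau\abs{\nabla u^{\tau}}^{2}\,d\delta_{u^{\tau}=\xi}$, whose mass stays bounded uniformly precisely because of the prefactor $\tau$; and the commutator terms $(b(\xi)+\tau)\big(\chi^{\tau}\Delta\psi+2\nabla\chi^{\tau}\cdot\nabla\psi\big)\phi$ are of exactly the same nature as the temporal cutoff term $\chi\partial_{t}\phi$ in \cite{gess18}, being products of $\chi^{\tau}$ (or of its gradient, which against $\delta_{u^{\tau}=\xi}$ reduces to $\sqrt{b^{\tau}}\,\nabla u^{\tau}$ controlled by the dissipation) with smooth compactly supported coefficients.

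Feeding these uniform bounds into \cite[Theorem 3.1]{gess18} then yields the $W^{s,r}$ estimate on the $\xi$-averages of $v^{\tau}$ with the exponents $s<\tfrac{1}{3(\theta_{1}^{2}-1)}$ and $r<\tfrac{4\theta_{2}-3}{2\theta_{1}-1}$ arising from the Gess--Hofmanova computation with the above $\alpha,\beta$ and the available moment order. To pass from averages over a bounded $\xi$-window to the full integral $\int_{\R}\psi\chi^{\tau}\,d\xi=\psi u^{\tau}$, I would split $\xi$ dyadically and absorb the tail using the moment bound $\E\sup_{t}\norm{u^{\tau}(t)}_{L^{2\theta_{1}-1}(\O)}^{2\theta_{1}-1}\le C\big(1+\norm{u_{0}}_{L^{2\theta_{1}-1}(\O)}^{2\theta_{1}-1}\big)$ from \eqref{kin_energy_Lp_approx} (valid since $2\theta_{1}-1\le 2\theta_{2}$), which produces the stated right-hand side; the local estimate on $V\subset\subset\O$ then follows upon choosing $\psi\equiv1$ on $V$. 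The hard part, I expect, is the bookkeeping required to cast the dissipation measure $m_{\tau}$ and the gradient commutator $\nabla\chi^{\tau}\cdot\nabla\psi$ into the precise source format demanded by the averaging lemma while keeping all constants independent of $\tau$, together with the tail control in $\xi$ that converts $\xi$-average regularity into regularity of $u^{\tau}$ itself.
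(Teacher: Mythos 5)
Your proposal follows essentially the same route as the paper: localize via $\varphi\chi^{\tau}$ with $\varphi=\psi(x)\phi(t)$, extend periodically over a box $\Pi\supset\mathrm{supp}(\psi)$, invoke the uniform-in-$\tau$ nondegeneracy of $\L^{\tau}$ with $\alpha=1/(\theta_{1}-1)$, $\beta=2$, treat the commutator $(b(\xi)+\tau)(\chi^{\tau}\Delta\psi+2\nabla\chi^{\tau}\cdot\nabla\psi)\phi$ exactly as the cutoff term $\chi\,\partial_{t}\phi$ in Gess--Hofmanov\'a, and feed the uniform energy and dissipation bounds into \cite[Theorem 3.1]{gess18}. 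The paper gives only this sketch in the discussion preceding the lemma, and your additional bookkeeping (tail control in $\xi$ via the $L^{2\theta_{1}-1}$ moment, choosing $\psi\equiv1$ on $V$) is consistent with it.
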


In what follows, we fix some $\lambda\in (0,1)$ and $r\in(1,2)$ satisfying condition \eqref{regularity}. Then, we study the tightness for the family of probability measures induced by $u^{\tau}$.

\begin{prop}
	Assume Hypotheses \ref{Hypothesis1}, \ref{Hypothesis2} and \ref{Hypothesis3}\ref{deg_condition}. Then the family of probability measures $\{\L(u^{\tau})\}_{\tau\in(0,1)}$ is tight in 
	\begin{equation*}
		L^{r}([0,T];L^{r}(\O))\bigcap L^{\infty}([0,T];W^{-\delta,r}(\O)),
	\end{equation*}
	 for every $\delta>0$.
\end{prop}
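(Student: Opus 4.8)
The plan is to realize the laws $\L(u^{\tau})$ as concentrated, up to arbitrarily small uniform error, on compact subsets of the target space; by the Markov inequality it then suffices to bound $u^{\tau}$ uniformly in $\tau\in(0,1)$ in a collection of auxiliary spaces that are compactly embedded into it. Three ingredients enter: interior fractional Sobolev regularity in space from the averaging Lemma \ref{averaging}, H\"older regularity in time extracted from the equation, and --- to compensate the fact that Lemma \ref{averaging} controls $u^{\tau}$ only away from $\partial\O$ --- the uniform higher integrability \eqref{kin_energy_Lp_approx} at the exponent $2\theta_{2}>r$, which keeps the mass of $u^{\tau}$ in the boundary layer uniformly small.

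First I would extract a uniform time regularity. From the weak formulation of \eqref{kin_deg_approx_eq}, $u^{\tau}(t)=u_{0}+\int_{0}^{t}\Delta\mathfrak{b}^{\tau}(u^{\tau}(s))\,ds+\int_{0}^{t}\sigma(u^{\tau}(s))\,dw(s)$ with $\mathfrak{b}^{\tau}(r)=\mathfrak{b}(r)+\tau r$. Since $\mathfrak{b}$ grows at most like $\abs{r}^{\theta_{2}}$ and $r\theta_{2}\leq 2\theta_{2}$, estimate \eqref{kin_energy_Lp_approx} with $p=2\theta_{2}$ bounds $\mathfrak{b}^{\tau}(u^{\tau})$ in $L^{2}(\Omega;L^{\infty}([0,T];L^{r}(\O)))$ uniformly in $\tau$, whence the drift integral is bounded in $L^{2}(\Omega;C^{0,1}([0,T];W^{-2,r}(\O)))$. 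For the stochastic term, \eqref{sgfine2} together with the Burkholder--Davis--Gundy and Kolmogorov continuity arguments used for \eqref{tight2} give, for every $\lambda\in(0,1/2)$, $\sup_{\tau}\E\norm{\int_{0}^{\cdot}\sigma(u^{\tau})\,dw}_{C^{\lambda}([0,T];H)}<\infty$; since $H\hookrightarrow L^{r}(\O)\hookrightarrow W^{-2,r}(\O)$ (recall $r<2$), I conclude $\sup_{\tau}\E\norm{u^{\tau}}_{C^{\lambda}([0,T];W^{-2,r}(\O))}<\infty$ for every $\lambda\in(0,1/2)$.

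Next I treat the $L^{r}([0,T];L^{r}(\O))$ tightness. Fix $s>0$ as in \eqref{regularity} and, for $\kappa>0$, set $\O_{\kappa}:=\{x\in\O:\mathrm{dist}(x,\partial\O)>\kappa\}$. Choosing $\psi\in C^{\infty}_{c}(\O)$ with $\psi\equiv1$ on $\O_{\kappa}$, Lemma \ref{averaging} yields $\sup_{\tau}\E\norm{u^{\tau}}^{r}_{L^{r}([0,T];W^{s,r}(\O_{\kappa}))}<\infty$. Combining this with the time regularity above through the chain $W^{s,r}(\O_{\kappa})\hookrightarrow\hookrightarrow L^{r}(\O_{\kappa})\hookrightarrow W^{-2,r}(\O_{\kappa})$ and the Aubin--Lions--Simon compactness theorem, the Markov inequality shows that $\{\L(u^{\tau})\}_{\tau}$, viewed through restriction to $\O_{\kappa}$, is tight in $L^{r}([0,T];L^{r}(\O_{\kappa}))$ for every $\kappa>0$. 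On the other hand, by H\"older's inequality and \eqref{kin_energy_Lp_approx}, $\sup_{\tau}\E\norm{u^{\tau}}^{r}_{L^{r}([0,T]\times(\O\setminus\O_{\kappa}))}\leq C\,T\,\abs{\O\setminus\O_{\kappa}}^{1-r/(2\theta_{2})}\to 0$ as $\kappa\to0$. A standard diagonal/boundary-layer argument then combines the interior compactness along the exhausting sequence $\O_{1/j}$ with the uniform smallness of the boundary-layer contribution to produce, for each $R>0$, a set $K_{R}$ relatively compact in $L^{r}([0,T];L^{r}(\O))$ with $\sup_{\tau}\P(u^{\tau}\notin K_{R})\to0$ as $R\to\infty$; that is, $\{\L(u^{\tau})\}_{\tau}$ is tight in $L^{r}([0,T];L^{r}(\O))$.

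Finally, for the space $L^{\infty}([0,T];W^{-\delta,r}(\O))$, estimate \eqref{kin_energy_Lp_approx} with $p=r$ bounds $u^{\tau}$ uniformly in $L^{\infty}([0,T];L^{r}(\O))$, the embedding $L^{r}(\O)\hookrightarrow\hookrightarrow W^{-\delta,r}(\O)$ is compact for every $\delta>0$, and $W^{-\delta,r}(\O)\hookrightarrow W^{-2,r}(\O)$. Together with the equicontinuity in $C^{\lambda}([0,T];W^{-2,r}(\O))$ from the second step, the standard interpolation compactness criterion (a family bounded in $L^{\infty}([0,T];L^{r})$ and equicontinuous in $C([0,T];W^{-2,r})$ is relatively compact in $C([0,T];W^{-\delta,r})$) and the Markov inequality give tightness of $\{\L(u^{\tau})\}_{\tau}$ in $C([0,T];W^{-\delta,r}(\O))$, hence a fortiori in $L^{\infty}([0,T];W^{-\delta,r}(\O))$. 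I expect the only genuine difficulty to be the $L^{r}([0,T];L^{r}(\O))$ part: because Lemma \ref{averaging} provides no control up to $\partial\O$, compactness there cannot come from spatial regularity alone, and the argument hinges on trading the missing boundary regularity for the uniform higher integrability \eqref{kin_energy_Lp_approx}, as carried out in the third step.
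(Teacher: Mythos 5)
Your proposal is correct and follows essentially the same route as the paper: H\"older-in-time bounds in $W^{-2,r}(\O)$ from the equation, interior $W^{s,r}$ regularity from the averaging lemma, Simon's compactness theorem, and the uniform $L^\infty([0,T];L^{2\theta_2}(\O))$ bound to control the boundary layer. The only (cosmetic) difference is that you quantify the boundary contribution via H\"older's inequality on $\O\setminus\O_\kappa$, whereas the paper extracts an a.e.\ convergent subsequence on the interior and concludes with the Vitali convergence theorem — both hinge on the same higher integrability $2\theta_2>r$.
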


\begin{proof}
	Thanks to the uniform bound
	\begin{equation}\label{uniform_bound2}
		\sup_{\tau\in(0,1)}\ \E\sup_{t\in[0,T]}\norm{u^{\tau}(t)}_{L^{2\theta_{2}}(\O)}^{2\theta_{2}}\leq C_{T}\Big(1+\norm{u_{0}}_{L^{2\theta_{2}}(\O)}^{2\theta_{2}}\Big)
	\end{equation}
	by using the same argument as in the proof of Theorem \ref{weak_nondeg}, we have for every $\lambda\in(0,1/2)$
	\begin{equation}\label{uniform_bound3}
		\sup_{\tau\in(0,1)}\E\norm{u^{\tau}}_{C^{\lambda}([0,T];H^{-2})}<\infty.
	\end{equation}
	Let $\lambda\in(0,1/2)$ and let $\O_{1}\subset\subset\O_{2}\subset\subset\dots\subset \O$ be a sequence of non-empty smooth open sets such that $\cup_{i=1}^{\infty}\O_{i}=\O$. Given $R>0$, as in \cite[Proposition 5.3]{frid22}, we consider the set
	\begin{equation*}
		\begin{array}{ll}
			&\ds{K_{R}=\Big\{ u\in L^{\infty}([0,T];L^{2\theta_{2}}(\O))\bigcap L^{r}([0,T];W^{s,r}_{\text{loc}}(\O))\bigcap C^{\lambda}([0,T];W^{-2,r}(\O)): }\\
			\vs
			&\ds{\quad\quad\quad\quad\quad\quad\quad\quad\quad\quad \norm{u}_{L^{\infty}([0,T];L^{2\theta}(\O))}\leq R,\ \norm{u}_{C^{\lambda}([0,T];W^{-2,r}(\O))}\leq R, }\\
			\vs
			&\ds{\quad\quad\quad\quad\quad\quad\quad\quad\quad\quad \text{and}\ \norm{u}_{L^{r}([0,T];W^{s,r}(\O_{i}))}\leq 2^{i}(C_{\O_{i}}+1)R,\ \ i\in\mathbb{N}\Big\} },
		\end{array}
	\end{equation*}
	We will prove that $K_{R}$ is a relatively compact subset of
	\begin{equation*}
		L^{r}([0,T];L^{r}(\O))\bigcap L^{\infty}([0,T];W^{-\delta,r}(\O)),
	\end{equation*} 
	 for every $p>1$ and $\delta>0$. Indeed, since $K_{R}$ is bounded in $L^{\infty}([0,T];L^{r}(\O))$ and $C^{\lambda}([0,T];W^{-2,r}(\O))$, due to the compact embedding $L^{r}(\O)\hookrightarrow W^{-2,r}(\O)$, by \cite[Theorem 7]{simon1986} we have $K_{R}$ is relatively compact in $L^{\infty}([0,T];W^{-\delta,r}(\O))$, for every $\delta>0$. Moreover, for any $p>r$, let us define 
	\begin{equation*}
		\delta_{p}=\frac{sr}{p-r},\ \ \ \ \theta_{p}=\frac{s}{s+\delta_{p}}=\frac{p-r}{p}.
	\end{equation*}
	Then, if $O_{0}$ is any smooth subset of $\O$ with $\overline{\O_{0}}\subset \O$, by interpolation we have 
	\begin{equation*}
		\norm{u}_{L^{r}(\O_{0})}\leq \norm{u}_{W^{-\delta_{p},r}(\O_{0})}^{\theta_{p}}\norm{u}_{W^{s,r}(\O_{0})}^{1-\theta_{p}}.
	\end{equation*}    
	Thus, since $K_{R}$ is bounded in $W^{s,r}(\O_{0})$ and 
	\begin{equation*}
		\frac{\theta_{p}}{\infty}+\frac{1-\theta_{p}}{r}=\frac{1}{p},
	\end{equation*}
	by \cite[Theorem 7]{simon1986} we have $K_{R}$ is relatively compact in $L^{p}([0,T];L^{r}(\O_{0}))$. Applying this argument repeatedly for $\O_{1}$, $\O_{2}$, etc. in the place of $\O_{0}$, by a diagonal argument we can conclude that $K_{R}$ is relatively compact in $L^{p}([0,T];L^{r}_{\text{loc}}(\O))$. This implies, in particular, that if $\{f_{n}\}_{n\in\mathbb{N}}$ is any sequence in $K_{R}$, then we can find a subsequence that converges almost everywhere on $[0,T]\times \O$. Then, due to the uniform boundedness of $\{f_{n}\}_{n\in\mathbb{N}}$ in $L^{\infty}([0,T];L^{2\theta_{2}}(\O))$ and the fact that $2\theta_{2}>r$, by the Vitali convergence theorem it strongly converges in $L^{r}([0,T];L^{r}(\O))$. Hence, we conclude that $K_{R}$ is relatively compact in $L^{r}([0,T];L^{r}(\O))$.
	
	Now, if we denote $\mu_{u^{\tau}}:=\L(u^{\tau})$, $\tau\in(0,1)$, then thanks to Lemma \ref{averaging}, \eqref{uniform_bound2} and \eqref{uniform_bound3}, we obtain that there is a constant $C>0$ such that for any $\tau\in(0,1)$ 
	\begin{equation*}
		\begin{array}{ll}
			&\ds{\mu_{u^{\tau}}(K_{R}^{c})\leq \P\Big(\norm{u^{\tau}}_{L^{\infty}([0,T];L^{2\theta_{2}}(\O))}>R\Big)+\P\Big(\norm{u^{\tau}}_{C^{\lambda}([0,T];W^{-2,r}(\O))}>R\Big) }\\
			\vs
			&\ds{\quad\quad\quad\quad\quad +\sum_{i=1}^{\infty}\P\Big(\norm{u^{\tau}}_{L^{r}([0,T];W^{s,r}(\O_{i}))}>2^{n}(C_{\O_{i}}+1)R\Big)  }\\
			\vs
			&\ds{\quad\quad\leq \frac{1}{R}\Big(\E\norm{u^{\tau}}_{L^{\infty}([0,T];L^{2\theta_{2}}(\O))}+\E\norm{u^{\tau}}_{C^{\lambda}([0,T];W^{-2,r}(\O))}\Big) }\\
			\vs
			&\ds{\quad\quad\quad\quad\quad +\frac{1}{R}\sum_{i=1}^{\infty}\frac{1}{2^{i}(C_{\O_{i}}+1)}\E\norm{u^{\tau}}_{L^{r}([0,T];W^{s,r}(\O_{i}))}\leq \frac{C}{R} },
		\end{array}
	\end{equation*}
	and since $R>0$ is arbitrary, we complete the proof.
	
\end{proof}

\subsubsection{Proof of Theorem \ref{kin_deg}}

According to Corollary \ref{uniqueness_kinetic}, we know there is at most one kinetic solution for equation \eqref{limit_para} in $L^{2\theta_{2}}(\Omega;L^{\infty}([0,T];L^{2\theta_{2}}(\O)))$. As we have mentioned in the proof of Theorem \ref{weak_nondeg}, in order to prove the existence of a kinetic solution, it is sufficient to prove that there exists a probabilistically kinetic solution for equation \eqref{limit_para}
\begin{equation*}
	\big(\tilde{\Omega},\tilde{\F},\{\tilde{\F}\}_{t},\tilde{\P},\tilde{w},\tilde{u},\tilde{m}\big)
\end{equation*}  
such that $\tilde{u}\in L^{2\theta_{2}}(\tilde{\Omega};L^{\infty}([0,T];L^{2\theta_{2}}(\O)))$.

To this purpose, let $r\in(1,2)$ be any constant satisfying condition \eqref{regularity} and we define the space
\begin{equation*}
	\Xi_{\delta}(T)=\big(L^{r}([0,T];L^{r}(\O))\bigcap L^{\infty}([0,T];W^{-\delta,r}(\O))\big)\times C([0,T];\mathcal{U}_{0}),\ \ \ \ \delta>0,
\end{equation*}
where $\mathcal{U}_{0}$ is any Hilbert space such that the embedding $\mathcal{U}\hookrightarrow\mathcal{U}_{0}$ is Hilbert-Schmidt.

For every $\tau\in(0,1)$, let $u^{\tau}$ be the unique weak solution of equation \eqref{kin_deg_approx_eq}. Then for $\delta>0$, due to the tightness of $\{\L(u^{\tau},w)\}_{\tau\in(0,1)}$ in $\Xi_{\delta}(T)$, there exists a sequence $\tau_{k}\downarrow0$ such that $\L(u^{\tau_{k}},w)$ is weakly convergent in $\Xi_{\delta}(T)$. If we denote by $u$ any weak limit point for $\{u^{\tau_{k}}\}_{k\in\mathbb{N}}$, then by the Skorokhod Theorem there exists random variables $\mathcal{Z}_{k}=(\tilde{u}_{k},\tilde{w}_{k})$,  $\mathcal{Z}=(\tilde{u},\tilde{w})$, and a probability space $\big(\tilde{\Omega},\tilde{\F},\{\tilde{\F}\}_{t},\tilde{\P}\big)$ such that 
	\begin{equation*}
		\L(\mathcal{Z})=\L(u,w),\ \ \ \ \L(\mathcal{Z}_{n})=\L(u^{\tau_{k}},w),\ \ \ k\in\mathbb{N},
	\end{equation*}
	and 
	\begin{equation}\label{kin_almost_surely}
		\lim_{k\to\infty}\Big(\norm{\tilde{u}_{k}-\tilde{u}}_{L^{r}([0,T];L^{r}(\O))}+\norm{\tilde{u}_{k}-\tilde{u}}_{L^{\infty}([0,T];W^{-\delta,r}(\O))}+\norm{\tilde{w}_{k}-\tilde{w}}_{C([0,T];\mathcal{U}_{0})}\Big)=0,\ \ \ \tilde{\P}\text{-a.s.}.
	\end{equation}
Moreover, due to \eqref{kin_energy_Lp_approx}, we have 
\begin{equation}\label{kin_energy_Lp_approx_martingale}
	\begin{array}{ll}
		&\ds{\sup_{k\in\mathbb{N}}\Bigg(\tilde{\E}\sup_{t\in[0,T]}\norm{\tilde{u}_{k}(t)}_{L^{2\theta}(\O)}^{2\theta_{2}}+\tilde{\E}\int_{0}^{T}\int_{\O}\abs{\tilde{u}_{k}(s,x)}^{2(\theta_{2}-1)}b(\tilde{u}_{k}(s,x))\abs{\nabla \tilde{u}_{k}(s,x)}^{2}dxds}\\
		\vs
		&\ds{\quad\quad\quad\quad\quad +\tau_{k}\ \tilde{\E}\int_{0}^{T}\int_{\O}\abs{\tilde{u}_{k}(s,x)}^{2(\theta_{2}-1)}\abs{\nabla \tilde{u}_{k}(s,x)}^{2}dxds\Bigg)\leq C_{T}\Big(1+\norm{u_{0}}_{L^{2\theta_{2}}(\O)}^{2\theta_{2}}\Big)},
	\end{array}
\end{equation}
together with \eqref{kin_almost_surely}, this implies that
	\begin{equation}
	\tilde{\E}\ \esssup_{t\in[0,T]}\norm{\tilde{u}(t)}_{L^{2\theta_{2}}(\O)}^{2\theta_{2}}\leq C_{T}\Big(1+\norm{u_{0}}_{L^{2\theta_{2}}(\O)}^{2\theta_{2}}\Big),
\end{equation}
and we may extract a subsequence (not relabeled) of $\{\tilde{u}_{k}\}_{k\in\mathbb{N}}$ such that
	\begin{equation}\label{convergence_Lp_kin}
		\lim_{k\to\infty}\ \tilde{u}_{k}=\tilde{u},\ \ \ \text{in}\ \ \ L^{2}(\tilde{\Omega}\times[0,T];H).
	\end{equation}
Next, if we denote $\tilde{h}_{k}:=\mathbf{1}_{\tilde{u}_{n}>\xi}$, then the pair $(\tilde{h}_{k},\tilde{m}_{k})$ satisfies the kinetic formulation:
	\begin{equation}\label{kin_formulation_approx}
		d\tilde{h}_{k}-(b(\xi)+\tau_{k})\Delta\tilde{h}_{k}dt=\sum_{i}^{\infty}\delta_{\tilde{u}_{k}=\xi}\sigma_{i}(x,\xi)d\beta_{i}(t)+\partial_{\xi}\Big(\tilde{m}_{k}-\frac{1}{2}\delta_{\tilde{u}_{k}=\xi}\Sigma^{2}(x,\xi)\Big),
	\end{equation}
	where $\tilde{m}_{k}=\tilde{n}_{1,k}+\tilde{n}_{2,k}$, 
	\begin{equation*}
		d\tilde{n}_{1,k}(t,x,\xi)=b(\xi)\abs{\nabla\tilde{u}_{k}}^{2}d\delta_{\tilde{u}_{k}=\xi}dxdt,\ \ \ \ 		d\tilde{n}_{2,k}(t,x,\xi)=\tau_{k}\abs{\nabla\tilde{u}_{k}}^{2}d\delta_{\tilde{u}_{k}=\xi}dxdt.
	\end{equation*}
	Note that, in view of \eqref{kin_energy_Lp_approx} with $p=q=2$, we have
	\begin{equation*}
		\sup_{k\in\mathbb{N}}\E\big\lvert \tilde{m}_{k}([0,T]\times\O\times\mathbb{R})\big\rvert^{2}= \sup_{k\in\mathbb{N}}\E\Big\lvert \int_{0}^{T}\Inner{(b(\tilde{u}_{k}(t))+\tau_{k})\nabla \tilde{u}_{k}(t),\nabla\tilde{u}_{k}(t)}_{H}dt\Big\rvert^{2}\leq C
	\end{equation*}
	Hence, the sequence $\{\tilde{m}_{k}\}_{k\in\mathbb{N}}$ is bounded in $L^{2}_{w}(\hat{\Omega};\mathcal{M}_{b}([0,T]\times\O\times\mathbb{R}))$, and according to the Banach-Alaoglu theorem, it possesses a weak$^{\ast}$ convergent subsequence $\{\tilde{m}_{k}\}_{k\in\mathbb{N}}$ (not relabeled). Then by the same approach as in \cite[Proof of Theorem 6.4]{debussche16}, its weak$^{\ast}$ limit $\tilde{m}$ is a kinetic measure (see Definition \ref{def_kinetic_measure}), and moreover, thanks to \eqref{kin_energy_Lp_approx_martingale}, we can obtain that the pair $(\tilde{h}:=\mathbf{1}_{\tilde{u}>\xi},\tilde{m})$ satisfies the kinetic formulation for equation \eqref{limit_para} by passing the limit as $k\to\infty$ in \eqref{kin_formulation_approx}. Finally, if we show that 
	\begin{equation}\label{goal_kin}
		\text{div}\int_{0}^{\tilde{u}_{k}}\sqrt{b(\xi)}d\xi\rightharpoonup \text{div}\int_{0}^{\tilde{u}}\sqrt{b(\xi)}d\xi\ \ \ \text{in}\ \ \ L^{2}(\tilde{\Omega}\times[0,T]\times\O).
	\end{equation}
	 then by proceeding as in \cite[Proof of Theorem 6.4]{debussche16}, we may obtain that the chain rule formula \eqref{chain_rule_kin} holds and $\tilde{m}\geq \tilde{n}_{1}$, $\P$-a.s., where $\tilde{n}_{1}$ is defined as in \eqref{kinetic_measure}, with $u$ replaced by $\tilde{u}$. And this implies that  
	\begin{equation*}
		\big(\tilde{\Omega},\tilde{\F},\{\tilde{\F}\}_{t},\tilde{\P},\tilde{w},\tilde{u},\tilde{m}\big)
	\end{equation*}
	is a kinetic solution for equation \eqref{limit_para}. Concerning \eqref{goal_kin}, due to \eqref{kin_energy_Lp_approx_martingale} we have 
	\begin{equation*}
		\sup_{k\in\mathbb{N}}\tilde{\E}\int_{0}^{T}\int_{\O}\Big\lvert \text{div}\int_{0}^{\tilde{u}_{k}}\sqrt{b(\xi)}d\xi\Big\rvert^{2}dxdt=\sup_{k\in\mathbb{N}}\tilde{\E}\int_{0}^{T}\int_{\O}b(\tilde{u}_{k}(t,x))\abs{\nabla\tilde{u}_{k}(t,x)}^{2}dxdt<\infty,
	\end{equation*}
	and for all $\varphi\in C([0,T];C^{1}_{c}(\O))$ and $\psi\in L^{\infty}(\tilde{\Omega})$
	\begin{equation*}
		\begin{array}{ll}
			&\ds{\Bigg\lvert \tilde{\E}\psi\int_{0}^{T}\int_{\mathcal{O}}\Bigg[\text{div}\int_{0}^{\tilde{u}_{k}}\sqrt{b(\xi)}d\xi-\text{div}\int_{0}^{\tilde{u}}\sqrt{b(\xi)}d\xi\Bigg]\varphi(t,x)dxdt\Bigg\rvert }\\
			\vs
			&\ds{\leq C\norm{\psi}_{L^{\infty}(\Omega)}\norm{\varphi}_{C([0,T];C^{1}_{c}(\mathcal{O}))}\cdot\tilde{\E}\int_{0}^{T}\int_{\O}\abs{\tilde{u}_{k}(t,x)-\tilde{u}(t,x)}\Big(1+\abs{\tilde{u}_{k}(t,x)}^{\theta_{2}-1}+\abs{\tilde{u}(t,x)}^{\theta_{2}-1}\Big)dxdt }\\
			\vs
			&\ds{\leq C_{T}\norm{\psi}_{L^{\infty}(\Omega)}\norm{\varphi}_{C([0,T];C^{1}_{c}(\mathcal{O}))}\cdot \tilde{\E}\norm{\tilde{u}_{k}-\tilde{u}}_{L^{2}([0,T];H)}\Big[1+\tilde{\E}\esssup_{t\in[0,T]}\Big(\norm{\tilde{u}(t)}_{L^{2(\theta_{2}-1)}}^{\theta_{2}-1}+\norm{\tilde{u}_{k}(t)}_{L^{2(\theta_{2}-1)}}^{\theta_{2}-1}\Big)\Big] }\\
			\vs
			&\ds{\to0},
		\end{array}
	\end{equation*}
	which implies that \eqref{goal_kin} follows. Due to what we have discussed above, we complete the proof of the existence for a kineric solution $u$ of equation \eqref{limit_para}. Finally, from Lemma \ref{continuity_Lp} we already know that $u$ has $\P$-almost surely continuous trajectories in $L^{2\theta_{2}}(\O)$.

\section{Well-posedness of equation \eqref{limit_para} in $L^{1}(\O)$}

\label{sec_proof_main_thm}

In this section we will establish the well-posedness for \eqref{limit_para} in the $L^{1}$ setting; that is, when the initial data $u_{0}\in L^{1}(\O)$. In order to prove Theorem \ref{main_thm}, we need some preliminary results about generalized kinectic solutions of equation \eqref{limit_para}.

We start with a estimate which is a modification of \cite[Lemma 4.5]{gess18}.
\begin{lem}\label{kinetic_measure_bound}
	Assume Hypotheses \ref{Hypothesis1} and \ref{Hypothesis2}. Let $u_{0}\in L^{1}(\mathcal{O})$ and $u$ be a generalized kinetic solution of equation \eqref{limit_para} with the corresponding generalized kinetic measure $m$. Then for every $k\in\mathbb{N}$ and $p\geq1$,
	\begin{equation}
		\E\Big\lvert m\big([0,T]\times\mathcal{O}\times[-k,k]\big)\Big\rvert^{p}\leq C\Big(p,k,T,\norm{u_{0}}_{L^{1}(\mathcal{O})}^{p}\Big).
	\end{equation}
\end{lem}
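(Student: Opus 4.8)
The goal is to estimate the mass that $m$ places on the slab $[0,T]\times\O\times[-k,k]$ by feeding into the kinetic identity \eqref{kinetic_test} (evaluated at $t=T$ on the right-continuous representative $h^{+}$) a test function of product form $\psi(x,\xi)=\zeta_{\delta}(x)\beta(\xi)$. Here $\{\zeta_{\delta}\}$ is an $\O$-boundary layer sequence with $\Delta\zeta_{\delta}\leq0$ (such as the one constructed in Section \ref{sec_boundary}), and $\beta\geq0$ is a smooth, compactly supported approximation of the shifted truncation $\beta_{\infty}(\xi)=\min\big((\xi+k)^{+},2k\big)$, for which $\beta_{\infty}'=\mathbf{1}_{[-k,k]}$ and $0\leq\beta_{\infty}\leq 2k$. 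Since $\psi$ vanishes on $\partial\O$ but is not compactly supported in $\O$, its admissibility in \eqref{kinetic_test} is obtained by the same approximation/boundary-layer argument used in the proof of Proposition \ref{uniqueness_further}.

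Solving \eqref{kinetic_test} for $m(\partial_{\xi}\psi)([0,T])=\int\zeta_{\delta}\beta'\,dm$ expresses it as the sum of a terminal term $-\dInner{h^{+}(T),\psi}$, an initial term $\dInner{h(0),\psi}$, the parabolic term $\int_{0}^{T}\Inner{h(s),b(\xi)\Delta_{x}\psi}ds$, a martingale, and the $\Sigma^{2}$-correction. The decisive observation is that, since $\Delta_{x}\psi=\beta\,\Delta\zeta_{\delta}$ with $\beta\geq0$, the parabolic term equals $\int_{0}^{T}\int_{\O}\Delta\zeta_{\delta}(x)\big(\int_{-\infty}^{u(s,x)}b(\xi)\beta(\xi)\,d\xi\big)dx\,ds$, whose inner integral is nonnegative (as $b>0$, $\beta\geq0$) while $\Delta\zeta_{\delta}\leq0$; hence the whole parabolic term is $\leq0$ and may be discarded. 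This is exactly what replaces the vanishing of this term on the torus in \cite{gess18}, and it is what lets us avoid any control of $b$ at large $\xi$, unavailable under Hypothesis \ref{Hypothesis2} alone. Likewise $-\dInner{h^{+}(T),\psi}\leq0$ is discarded, while $\dInner{h(0),\psi}\leq 2k\,\norm{u_{0}}_{L^{1}(\O)}$ follows from $\beta\leq2k$, and the $\Sigma^{2}$-term is bounded, using \eqref{sgfine2} and $\beta_{\infty}'=\mathbf{1}_{[-k,k]}$, by $\tfrac12 D(1+k^{2})T\abs{\O}$.

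Since $\beta'=\mathbf{1}_{[-k,k]}$ minus the (small, negative) slope on the descent interval $[k,K]$ over which $\beta$ returns to $0$, one gets pathwise
\[\int_{[-k,k]}\zeta_{\delta}\,dm\ \leq\ \dInner{h(0),\psi}+\text{(martingale)}+\tfrac12\!\int\zeta_{\delta}\beta'\Sigma^{2}d\nu+\frac{C_{k}}{K}\,m\big([0,T]\times\O\times\{k\leq\abs{\xi}\leq K\}\big).\]
Letting $K\to\infty$ along a subsequence for which the last term tends to $0$ almost surely — such a subsequence exists because $\tfrac{1}{K}\E\,m(\{k\leq\abs{\xi}\leq K\})\to0$ by the decay property in Definition \ref{def_gen_kinetic_measure} — makes $\beta\to\beta_{\infty}$, so the surviving terms are the initial, martingale and $\Sigma^{2}$ contributions built from $\beta_{\infty}$, all already shown to be controlled. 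Taking then the $p$-th power and expectation, and estimating the martingale by Burkholder--Davis--Gundy (its bracket is $\int_{0}^{T}\sum_{i}\big\lvert\int_{\O}\zeta_{\delta}\beta(u)\sigma_{i}(\cdot,u)\,dx\big\rvert^{2}ds\lesssim k^{2}DT\sup_{s}(1+\norm{u(s)}_{L^{1}(\O)})^{2}$, finite in $L^{p}$ by the defining moment bound of a generalized kinetic solution), yields $\E\big(\int_{[-k,k]}\zeta_{\delta}dm\big)^{p}\leq C(p,k,T,\norm{u_{0}}_{L^{1}(\O)}^{p})$ uniformly in $\delta$. Finally $\delta\to0$ gives $\int_{[-k,k]}\zeta_{\delta}dm\uparrow m([0,T]\times\O\times[-k,k])$, and monotone convergence concludes.

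I expect the main obstacle to be the parabolic (divergence-form diffusion) term: making $\Inner{h,b(\xi)\Delta_{x}\psi}$ simultaneously well defined, finite, and of the correct sign when $b$ is merely continuous and possibly unbounded is the heart of the matter, and is precisely where the boundary-layer construction with $\Delta\zeta_{\delta}\leq0$ together with the sign $\beta\geq0$ is used. A secondary technical point is to order the limits correctly — sending $K\to\infty$ pathwise (using only the first-moment decay of $m$) before taking the $p$-th moment — so as not to require $p$-th moment control of the tails of $m$.
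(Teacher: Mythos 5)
Your proposal is correct and follows essentially the same route as the paper: your $\beta_{\infty}(\xi)=\min((\xi+k)^{+},2k)$ is exactly the paper's $\Psi_{k}'$, the sign argument combining $\Delta\zeta_{\delta}\leq 0$ with the nonnegativity of the diffusive term is the paper's key step, and the initial-data, $\Sigma^{2}$ and Burkholder--Davis--Gundy estimates are identical. The only (harmless) differences are organizational: you apply the boundary layer sequence $\zeta_{\delta}$ globally where the paper localizes via a partition of unity over the covering $\{B_{i}\}$, and you make explicit, via the descent on $[k,K]$ and the decay property of $m$, the ``preliminary step of regularization'' that the paper invokes to admit the non-compactly-supported test function $\Psi_{k}'$.
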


\begin{proof}
	 If we denote $\chi:=\mathbf{1}_{u>\xi}-\mathbf{1}_{0>\xi}$, then the following equation
	\begin{equation}\label{transform}
		\partial_{t}\chi-b(\xi)\Delta\chi=\partial_{\xi}q+\sum_{i=1}^{\infty}\sigma_{i}(x,\xi)\delta_{u=\xi}\dot{\beta}_{k}(t)
	\end{equation}
	holds in the sense of distributions over $[0,T)\times\mathcal{O}\times\mathbb{R}$, where 
	\begin{equation*}
		q(t,x,\xi):=m(t,x,\xi)-\frac{1}{2}\delta_{u(t,x)=\xi}\Sigma^{2}(x,\xi).
	\end{equation*}
	In what follows, for $k\in\mathbb{N}$ we set
	\begin{equation*}
		\psi_{k}(\xi)=\mathbf{1}_{[-k,k]}(\xi), \ \ \ \ \ \Psi_{k}(\xi)=\int_{-k}^{\xi}\int_{-k}^{r}\psi_{k}(s)dsdr.
	\end{equation*}
	Let $\eta\in C^{1}_{c}([0,T))$ be nonnegative such that $\eta(0)=1$, $\eta'\leq 0$. First, for every $0\leq \phi\in C^{\infty}_{c}(\mathcal{O})$, after a preliminary step of regularization we may take $\varphi(t,x,\xi)=\eta(t)\phi(x)\Psi_{k}'(\xi)$ as test functions in \eqref{transform}, and we obtain 
	\begin{equation}\label{bound_test}
		\begin{array}{ll}
			&\ds{\int_{0}^{T}\int_{\mathcal{O}}\big[\Psi_{k}(u(t,x))-\Psi_{k}(0)\big]\phi(x)\abs{\eta'(t)}dxdt+\int_{[0,T]\times\mathcal{O}\times\mathbb{R}}\eta(t)\phi(x)\psi_{k}(\xi)dm(t,x,\xi) }\\
			\vs
			&\ds{=\int_{\mathcal{O}}\big[\Psi_{k}(u_{0}(x))-\Psi_{k}(0)\big]\phi(x)\eta(0)dx+\int_{0}^{T}\int_{\mathcal{O}}\int_{\mathbb{R}}b(\xi)\eta(t)\Psi_{k}'(\xi)\chi(t,x,\xi)\nabla^{2}\phi(x)d\xi dxdt }\\
			\vs
			&\ds{\quad\quad\quad\quad+ \frac{1}{2}\int_{0}^{T}\int_{\mathcal{O}}\phi(x)\eta(t)\psi_{k}(u(t,x))\Sigma^{2}(x,u(t,x))dxdt }\\
			\vs
			&\ds{\quad\quad\quad\quad\quad\quad\quad\quad +\sum_{i=1}^{\infty}\int_{0}^{T}\int_{\mathcal{O}}\phi(x)\eta(t)\Psi_{k}'(u(t,x))\sigma_{i}(x,u(t,x))dxd\beta_{i}(t)}.
		\end{array}
	\end{equation}
	Next, for every $B\in\mathfrak{B}$ and $0\leq \phi\in C^{\infty}_{c}(B)$, by approximation we now take $\varphi(t,x,\xi)=\eta(t)\phi(x)\zeta_{\delta}(x)\Psi_{k}'(\xi)$ as test functions. Then note that, by using the same technique as in the proof of Corollary \ref{uniqueness_further}, it can be easily verified that
	\begin{equation*}
		\lim_{\delta\to0}\int_{0}^{T}\int_{\mathcal{O}}\int_{\mathbb{R}}b(\xi)\eta(t)\Psi_{k}'(\xi)\chi(t,x,\xi)\nabla\phi(x)\cdot\nabla\zeta_{\delta}(x)d\xi dxdt=0,\ \ \ \mathbb{P}\text{-a.s.},
	\end{equation*}
	and recall that $\Delta\zeta_{\delta}\leq0$, if we taking the limit as $\delta\to0$, it follows that \eqref{bound_test} holds for $0\leq\phi\in C^{\infty}_{c}(B)$, with the symbol ``$=$" replaced by ``$\leq$". Therefore, by the standard argument of partition of unity as we applied in the proof of Proposition \ref{L1_contraction}, we have $\mathbb{P}$-a.s.
	\begin{equation*}
		\begin{array}{ll}
			&\ds{\int_{0}^{T}\int_{\mathcal{O}}\big[\Psi_{k}(u(t,x))-\Psi_{k}(0)\big]\abs{\eta'(t)}dxdt+\int_{[0,T]\times\mathcal{O}\times\mathbb{R}}\eta(t)\psi_{k}(\xi)dm(t,x,\xi) }\\
			\vs
			&\ds{\leq \int_{\mathcal{O}}\big[\Psi_{k}(u_{0}(x))-\Psi_{k}(0)\big]dx + \frac{1}{2}\int_{0}^{T}\int_{\mathcal{O}}\eta(t)\psi_{k}(u(t,x))\Sigma^{2}(x,u(t,x))dxdt }\\
			\vs
			&\ds{\quad\quad\quad +\sum_{i=1}^{\infty}\int_{0}^{T}\int_{\mathcal{O}}\eta(t)\Psi_{k}'(u(t,x))\sigma_{i}(x,u(t,x))dxd\beta_{i}(t) },
		\end{array}
	\end{equation*}
	and thus for every $p\geq1$,
	\begin{equation*}
		\begin{array}{ll}
			&\ds{\E\Bigg\lvert \int_{0}^{T}\int_{\mathcal{O}}\Psi_{k}(u(t,x))\abs{\eta'(t)}dxdt\Bigg\rvert^{p} +\E\Bigg\lvert\int_{[0,T]\times\mathcal{O}\times[-k,k]}\eta(t)dm(t,x,\xi)\Bigg\rvert^{p} }\\
			\vs
			&\ds{\lesssim \E\Big\lvert \int_{0}^{T}\int_{\mathcal{O}}\Psi_{k}(0)\abs{\eta'(t)}dxdt\Big\rvert^{p} +\E\Big\lvert\int_{\mathcal{O}}\big[\Psi_{k}(u_{0}(x))-\Psi_{k}(0)\big]dx \Big\rvert^{p}}\\
			\vs
			&\ds{ +\E\Bigg\lvert\int_{0}^{T}\int_{\mathcal{O}}\eta(t)\psi_{k}(u(t,x))\Sigma^{2}(x,u(t,x))dxdt\Bigg\rvert^{p} +\E\Bigg\lvert\sum_{i=1}^{\infty}\int_{0}^{T}\int_{\mathcal{O}}\eta(t)\Psi_{k}'(u(t,x))\sigma_{i}(x,u(t,x))dxd\beta_{i}(t)\Bigg\rvert^{p} }
		\end{array}
	\end{equation*}
	Finally, letting $\eta\to\mathbf{1}_{[0,t]}$, and thanks to  \eqref{sgfine2} and the fact that
	\begin{equation*}
		0\leq \Psi_{k}(u)\leq 2k(k+\abs{u}),\ \ \ \ 0\leq \Psi_{k}'(u)\leq 2k\mathbf{1}_{u>-k},\ \ \ u\in\mathbb{R},
	\end{equation*}
	by proceeding as in the proof of \cite[Lemma 4.5]{gess18}, we finish the proof.
\end{proof}

\bigskip

\begin{lem}\label{kinetic_measure_bound_2}
	Assume Hypotheses \ref{Hypothesis1} and \ref{Hypothesis2}. Let $u_{0}\in L^{1}(\mathcal{O})$ and $u$ be a generalized kinetic solution of equation \eqref{limit_para} with the corresponding generalized kinetic measure $m$. Then for every nonnegative $\psi\in C^{\infty}_{c}(\mathbb{R})$,
	\begin{equation}
		\E \int_{[0,T]\times\mathcal{O}\times\mathbb{R}}\psi(\xi)dm(t,x,\xi)\leq C\big(T,\text{supp}(\psi),\norm{\psi}_{\infty},  \norm{u_{0}}_{L^{1}(\mathcal{O})}\big).
	\end{equation}
\end{lem}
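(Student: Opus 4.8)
The plan is to obtain this estimate as an immediate consequence of Lemma \ref{kinetic_measure_bound}, exploiting the fact that $\psi$ has compact support. First I would fix $k\in\mathbb{N}$ large enough that $\text{supp}(\psi)\subseteq[-k,k]$; such a $k$ exists precisely because $\psi\in C^\infty_c(\mathbb{R})$, and it is through this choice that the constant on the right-hand side acquires its dependence on $\text{supp}(\psi)$.

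Next, since $m$ is a nonnegative Radon measure and $\psi\geq0$ with $\psi\equiv0$ outside $[-k,k]$, the pointwise bound
\begin{equation*}
	0\leq \psi(\xi)\leq \norm{\psi}_{\infty}\,\mathbf{1}_{[-k,k]}(\xi),\ \ \ \ \xi\in\mathbb{R},
\end{equation*}
holds, and integrating it against $m$ yields
\begin{equation*}
	\int_{[0,T]\times\mathcal{O}\times\mathbb{R}}\psi(\xi)\,dm(t,x,\xi)\leq \norm{\psi}_{\infty}\,m\big([0,T]\times\mathcal{O}\times[-k,k]\big),\ \ \ \P\text{-a.s.}
\end{equation*}
Taking expectations and applying Lemma \ref{kinetic_measure_bound} with $p=1$ and this value of $k$ then gives
\begin{equation*}
	\E\int_{[0,T]\times\mathcal{O}\times\mathbb{R}}\psi(\xi)\,dm(t,x,\xi)\leq \norm{\psi}_{\infty}\,\E\, m\big([0,T]\times\mathcal{O}\times[-k,k]\big)\leq \norm{\psi}_{\infty}\,C\big(1,k,T,\norm{u_{0}}_{L^{1}(\mathcal{O})}\big),
\end{equation*}
which is exactly the claimed bound, with the dependence on $T$, $\text{supp}(\psi)$, $\norm{\psi}_{\infty}$ and $\norm{u_0}_{L^1(\mathcal{O})}$ read off from the right-hand side.

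There is essentially no genuine obstacle here: the lemma is a short corollary of the preceding one. The only point worth emphasizing is why compact support of $\psi$ is indispensable. Since a generalized kinetic measure need not be finite on all of $[0,T]\times\mathcal{O}\times\mathbb{R}$ (as noted in the Remark following Definition \ref{def_gen_kinetic_measure}), one cannot simply bound $\E\, m$ over the whole space; it is precisely the restriction of the integration to the slab $[-k,k]$ in the $\xi$-variable, forced by $\text{supp}(\psi)$, that allows one to invoke the finite-mass estimate of Lemma \ref{kinetic_measure_bound}.
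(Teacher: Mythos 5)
Your argument is correct, but it takes a different (and shorter) route than the paper. You dominate $\psi(\xi)\leq \norm{\psi}_{\infty}\mathbf{1}_{[-k,k]}(\xi)$ with $k\in\mathbb{N}$ chosen so that $\text{supp}(\psi)\subseteq[-k,k]$, integrate against the nonnegative measure $m$, and invoke Lemma \ref{kinetic_measure_bound} with $p=1$; since that lemma guarantees $m([0,T]\times\mathcal{O}\times[-k,k])$ is almost surely finite with integrable expectation, every step is legitimate, and the resulting constant depends on $\text{supp}(\psi)$ only through $k$, as allowed by the statement. The paper instead reruns the machinery behind Lemma \ref{kinetic_measure_bound} directly for the given $\psi$: it picks a primitive $\Psi\geq0$ with $\Psi''=\psi$, tests the kinetic formulation against $\eta(t)\phi(x)\Psi'(\xi)$, handles the boundary terms with the boundary layer sequence and a partition of unity, and reads off the bound from $\abs{\Psi'(r)}\leq \abs{\text{supp}(\psi)}\norm{\psi}_{\infty}$ and $\abs{\Psi(r)}\leq c\,\abs{\text{supp}(\psi)}\norm{\psi}_{\infty}(1+\abs{r})$. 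What the paper's route buys is an explicit constant that is essentially linear in $\abs{\text{supp}(\psi)}\cdot\norm{\psi}_{\infty}\cdot(1+\norm{u_0}_{L^1(\mathcal{O})})$, whereas your domination inherits whatever (cruder, roughly quadratic in $k$) dependence Lemma \ref{kinetic_measure_bound} carries; what your route buys is brevity and the avoidance of a second pass through the test-function and localization argument. For the statement as written, which does not quantify the dependence on $\text{supp}(\psi)$, your proof is fully adequate.
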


\begin{proof}
	Let $0\leq \psi\in C^{\infty}_{c}(\mathbb{R})$, and let $\eta\in C^{1}_{c}([0,T))$ be nonnegative such that $\eta(0)=1$, $\eta'\leq 0$. Take $\Psi\in C^{\infty}(\mathbb{R})$ such that $\Psi\geq0$ and $\Psi''=\psi$, then by proceeding as in the proof of Lemma \ref{kinetic_measure_bound}, we have
	\begin{equation*}
		\begin{array}{ll}
			&\ds{\E\int_{0}^{T}\int_{\mathcal{O}}\Psi(u(t,x))\abs{\eta'(t)}dxdt+\E\int_{[0,T]\times\mathcal{O}\times\mathbb{R}}\eta(t)\psi(\xi)dm(t,x,\xi) }\\
			\vs
			&\ds{\leq \E\int_{0}^{T}\int_{\mathcal{O}}\Psi(0)\abs{\eta'(t)}dxdt+ \E\int_{\mathcal{O}}\big[\Psi(u_{0}(x))-\Psi(0)\big]\eta(0)dx }\\
			\vs
			&\ds{\quad\quad\quad\quad+ \frac{1}{2}\E\int_{0}^{T}\int_{\mathcal{O}}\eta(t)\psi(u(t,x))\Sigma^{2}(x,u(t,x))dxdt },
		\end{array}
	\end{equation*}
	Since for every $r\in\mathbb{R}$, 
	\begin{equation*}
		\abs{\Psi'(r)}\leq \text{supp}\big(\psi)\cdot\norm{\psi}_{\infty},\ \ \ \ \abs{\Psi(r)}\leq c\ \text{supp}\big(\psi)\cdot\norm{\psi}_{\infty}(1+\abs{r}),
	\end{equation*}
	letting now $\eta\to\mathbf{1}_{[0,t]}$, it follows from \eqref{sgfine2} that 
	\begin{equation*}
		\E \int_{[0,T]\times\mathcal{O}\times\mathbb{R}}\psi(\xi)dm(t,x,\xi)\leq C\big(T,\text{supp}(\psi),\norm{\psi}_{\infty},  \norm{u_{0}}_{L^{1}(\mathcal{O})}\big).
	\end{equation*}
\end{proof}

By using the same technique as in the proof of Lemma \ref{kinetic_measure_bound} and following along the lines of the proof of \cite[Proposition 4.7]{gess18}, we obtain the decay of the generalized kinetic measure.
\begin{prop}\label{decay}
	Assume Hypotheses \ref{Hypothesis1} and \ref{Hypothesis2}. Let $u_{0}\in L^{1}(\mathcal{O})$ and $u$ be a generalized kinetic solution of equation \eqref{limit_para} with the corresponding generalized kinetic measure $m$. Then 
	\begin{equation}\label{decay1}
		\begin{array}{ll}
			&\ds{\esssup_{t\in[0,T]}\E\norm{\big(u(t)-2^{n}\big)^{+}}_{L^{1}(\mathcal{O})}+\esssup_{t\in[0,T]}\E\norm{\big(u(t)+2^{n}\big)^{-}}_{L^{1}(\mathcal{O})}+\frac{1}{2^{n}}\E m(A_{2^{n}})}\\
			\vs
			&\ds{\leq C\Big(T,\norm{u_{0}}_{L^{1}(\mathcal{O})}\Big)\delta(n) },\ \ \ \forall n\in\mathbb{N},
		\end{array}
	\end{equation}
	where 
	\begin{equation*}
		A_{2^{n}}=[0,T]\times\mathcal{O}\times\big\{\xi\in\mathbb{R}:2^{n}\leq \abs{\xi}\leq 2^{n+1}\big\}
	\end{equation*}
	and $\delta(n)$ depends only on the functions
	\begin{equation*}
		R\mapsto \norm{(u_{0}-R)^{+}}_{L^{1}(\mathcal{O})},\ \ \ \ R\mapsto \norm{(u_{0}+R)^{-}}_{L^{1}(\mathcal{O})},
	\end{equation*}
	and satisfies $\lim_{n\to\infty}\delta(n)=0$.\\
	
	Furthermore,
	\begin{equation}\label{decay2}
		\begin{array}{ll}
			&\ds{\E\esssup_{t\in[0,T]}\norm{\big(u(t)-2^{n}\big)^{+}}_{L^{1}(\mathcal{O})}+\E\esssup_{t\in[0,T]}\norm{\big(u(t)+2^{n}\big)^{-}}_{L^{1}(\mathcal{O})}}\\
			\vs
			&\ds{\leq C\Big(T,\norm{u_{0}}_{L^{1}(\mathcal{O})}\Big)\Big(\delta(n)+\delta^{\frac{1}{2}}(n)\Big) },
		\end{array}
	\end{equation}
	where $\delta(n)$ is as above, in addition possibly depending on the functions
	\begin{equation*}
		R\mapsto\E\int_{0}^{T}\int_{\mathcal{O}}\mathbf{1}_{\abs{u(t,x)}\geq R}\ dxdt.
	\end{equation*}
\end{prop}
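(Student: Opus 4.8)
The plan is to re-run the argument of Lemma~\ref{kinetic_measure_bound}, but with renormalisation functions whose profile in the velocity variable $\xi$ is tuned to isolate the dyadic tails of $u$ and the mass of $m$ on the annulus $A_{2^n}$. For the positive tail, fix a convex $\Theta_n\in C^\infty(\mathbb{R})$ with $\Theta_n(\xi)=0$ for $\xi\le 2^n$ and with $\Theta_n''(\xi)$ a smooth version of $2^{-n}\mathbf{1}_{[2^n,2^{n+1}]}(\xi)$, so that $0\le\Theta_n'\le 1$, $\Theta_n'\equiv 1$ on $[2^{n+1},\infty)$, and $(\xi-2^{n+1})^+\le\Theta_n(\xi)\le(\xi-2^n)^+$. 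I would also use the reflected function $\widetilde\Theta_n(\xi):=\Theta_n(-\xi)$ to capture $(u+2^n)^-$ and the negative half of $A_{2^n}$. The two key pointwise identities are $\int_{\mathbb{R}} h(t,x,\xi)\,\Theta_n'(\xi)\,d\xi=\Theta_n(u(t,x))$ and the fact that $\int \Theta_n''(\xi)\,dm$ equals $2^{-n}$ times the mass of $m$ on the positive half of $A_{2^n}$.

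Following Lemma~\ref{kinetic_measure_bound}, I would test the kinetic formulation \eqref{kinetic_cond} (equivalently \eqref{transform}) with $\varphi(t,x,\xi)=\eta(t)\phi(x)\Theta_n'(\xi)$, where $\eta\in C^1_c([0,T))$ is nonnegative with $\eta(0)=1$, $\eta'\le 0$, first for $\phi\in C^\infty_c(\mathcal{O})$ and then, after multiplying by a boundary-layer sequence $\zeta_\delta$ with $\Delta\zeta_\delta\le 0$ and summing over a partition of unity subordinate to $\{B_i\}$, exactly as in Proposition~\ref{uniqueness_further} and in Lemma~\ref{kinetic_measure_bound}. The boundary-layer limit kills the cross term $\nabla\phi\cdot\nabla\zeta_\delta$ (the relevant flux vanishes on $\partial\mathcal{O}$ by the chain rule \eqref{chain_rule} and the boundary condition), the sign $\Delta\zeta_\delta\le 0$ gives the Laplacian contribution a favourable sign, and $\sum_i p_i\equiv 1$ removes the $b(\xi)\Delta_x$ term. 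What survives, after letting $\eta\uparrow\mathbf{1}_{[0,t]}$ and taking expectations so that the stochastic integral drops, is an inequality of the form $\E\int_\mathcal{O}\Theta_n(u(t))\,dx+2^{-n}\,\E\,m([0,t]\times\mathcal{O}\times[2^n,2^{n+1}])\le\int_\mathcal{O}\Theta_n(u_0)\,dx+\tfrac12\E\int_0^t\!\int_\mathcal{O}\Theta_n''(u)\,\Sigma^2(\cdot,u)$, together with the analogous inequality for $\widetilde\Theta_n$.

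The remaining $\Sigma^2$ term is the crux. On $\text{supp}\,\Theta_n''\subset\{|\xi|\sim 2^n\}$ we use \eqref{sgfine2} to bound $\Sigma^2\lesssim 2^{2n}$ while $\Theta_n''\lesssim 2^{-n}$, so that this term is dominated by $\E\int_0^t\!\int_\mathcal{O}|u|\,\mathbf{1}_{|u|\ge 2^n}$ (using $|u|\,\mathbf{1}_{|u|\ge 2^n}\le 2(|u|-2^{n-1})^+$), a tail quantity of the same nature as those appearing on the left-hand side. Following \cite[Proposition~4.7]{gess18}, this self-referential structure is closed so that the resulting bound reduces to a quantity $\delta(n)$, with $\delta(n)\to 0$, controlled solely through the initial tail profiles $R\mapsto\norm{(u_0-R)^+}_{L^1(\mathcal{O})}$ and $R\mapsto\norm{(u_0+R)^-}_{L^1(\mathcal{O})}$; this is \eqref{decay1}.

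For the stronger bound \eqref{decay2} I would not take the expectation before the supremum: keeping $\esssup_{t}$ inside and working with the left/right continuous representatives $h^\pm$, the only new term is the stochastic integral, to which I apply the Burkholder--Davis--Gundy inequality. Its quadratic variation is $\sum_i\int_0^T(\int_\mathcal{O}\Theta_n'(u)\sigma_i(\cdot,u))^2$, which by \eqref{sgfine} and $0\le\Theta_n'\le\mathbf{1}_{|\xi|\ge 2^n}$ is controlled by a quantity of order $\delta(n)$; BDG therefore costs a square root and produces the extra term $\delta^{1/2}(n)$, now also allowed to depend on $R\mapsto\E\int_0^T\!\int_\mathcal{O}\mathbf{1}_{|u|\ge R}$. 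The main obstacle, as in \cite[Proposition~4.7]{gess18}, is precisely this handling of the quadratically growing noise: one must exploit simultaneously that $\Theta_n''$ is small ($\sim 2^{-n}$) and supported where $u$ is large, and then absorb the resulting tail term back into the same dyadic tails, so that the final estimate genuinely decays and depends only on the data.
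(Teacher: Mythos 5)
Your proposal is correct and follows essentially the same route as the paper, which itself proves this proposition by combining the localized renormalization technique of Lemma \ref{kinetic_measure_bound} (boundary-layer sequence with $\Delta\zeta_\delta\le 0$, partition of unity killing the $b(\xi)\Delta_x$ term) with the dyadic tail argument of \cite[Proposition 4.7]{gess18}, including the bound $\Theta_n''(u)\Sigma^2(\cdot,u)\lesssim |u|\mathbf{1}_{|u|\ge 2^n}$ and the Burkholder--Davis--Gundy step that produces the extra $\delta^{1/2}(n)$ in \eqref{decay2}. Your write-up in fact supplies more detail on the choice of $\Theta_n$ and the closing of the self-referential tail estimate than the paper does.
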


By proceeding as in the proof of \cite[Corollary 4.8]{gess18}, we can deduce from \eqref{decay2} that generalized kinetic solutions have almost surely continuous trajectories in $L^{1}(\O)$.
\begin{cor}[Continuity in time]
	\label{continuity_L1}
	Assume Hypotheses \ref{Hypothesis1} and \ref{Hypothesis2}. Let $u_{0}\in L^{1}(\mathcal{O})$ and $u$ be a generalized kinetic solution of equation \eqref{limit_para}. Then there exists a representative of $u$ with $\P$-almost surely continuous trajectories in $L^{1}(\O)$. 
\end{cor}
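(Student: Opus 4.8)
The plan is to deduce strong $L^{1}$-continuity from the already established distributional continuity of the kinetic function, using the uniform decay \eqref{decay2} as the crucial ingredient, exactly along the lines of \cite[Corollary 4.8]{gess18}. First I would recall the representatives $h^{\pm}=\mathbf{1}_{u^{\pm}>\xi}$ furnished by the left/right-continuity lemma together with Proposition \ref{L1_contraction}: $h^{+}$ (resp. $h^{-}$) is $\P$-a.s. right (resp. left) continuous at every $t_{\ast}\in[0,T]$ in the sense of distributions over $\O\times\R$, the two agree off a countable set $S\subset[0,T]$, and $h^{\pm}(t)=\mathbf{1}_{u^{\pm}(t)>\xi}$ for representatives $u^{\pm}$ of $u$. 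The starting point is the elementary identity $\int_{\R}\abs{\mathbf{1}_{a>\xi}-\mathbf{1}_{b>\xi}}\,d\xi=\abs{a-b}$, which yields, for any two times $s,t$,
\begin{equation*}
	\norm{u(t)-u(s)}_{L^{1}(\O)}=\int_{\O}\int_{\R}\abs{h(t,x,\xi)-h(s,x,\xi)}\,d\xi\,dx.
\end{equation*}
Hence it suffices to prove that $t\mapsto h^{\pm}(t)$ is continuous into $L^{1}(\O\times\R)$.

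To upgrade the weak continuity to strong $L^{1}$-continuity I would exploit that $h$ is $\{0,1\}$-valued. Fixing $t_{\ast}$ and letting $t\downarrow t_{\ast}$, I truncate the $\xi$-integral to $\{\abs{\xi}\leq 2^{n}\}$; since $h^{2}=h$, one has, with all integrals taken over $\O\times[-2^{n},2^{n}]$,
\begin{equation*}
	\int\abs{h(t)-h^{+}(t_{\ast})}^{2}=\int h(t)+\int h^{+}(t_{\ast})-2\int h(t)\,h^{+}(t_{\ast}).
\end{equation*}
The distributional convergence $h(t)\rightharpoonup h^{+}(t_{\ast})$, together with the uniform $L^{\infty}$-bound, makes the cross term and the mass term both converge to $\int h^{+}(t_{\ast})$, so the right-hand side tends to $0$; thus $h(t)\to h^{+}(t_{\ast})$ strongly in $L^{2}$, hence in $L^{1}$, over $\O\times[-2^{n},2^{n}]$. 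The complementary tail is controlled exactly by
\begin{equation*}
	\int_{\O}\int_{\abs{\xi}>2^{n}}\abs{h(t)-\mathbf{1}_{0>\xi}}\,d\xi\,dx=\norm{(u(t)-2^{n})^{+}}_{L^{1}(\O)}+\norm{(u(t)+2^{n})^{-}}_{L^{1}(\O)},
\end{equation*}
which \eqref{decay2} makes small uniformly in $t$ and $\P$-almost surely. Sending first $t\downarrow t_{\ast}$ and then $n\to\infty$ gives right-continuity of $t\mapsto u^{+}(t)$ in $L^{1}(\O)$, and the left-continuity of $u^{-}$ follows symmetrically.

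It remains to glue the one-sided representatives. The key point is that although $h^{+}$ and $h^{-}$ may differ at times $t_{\ast}\in S$, the recovered functions $u^{\pm}$ do not: from \eqref{kinetic_test} all time-integral and stochastic-integral terms are continuous in $t$, so the distributional jump satisfies $h^{+}(t_{\ast})-h^{-}(t_{\ast})=\partial_{\xi}\mu$, where $\mu=m(\{t_{\ast}\}\times\cdot)\geq 0$ is the time-atom of the kinetic measure; since $m$, and hence $\mu$, vanishes for large $\abs{\xi}$ by \eqref{decay1} and is finite on compact $\xi$-sets by Lemma \ref{kinetic_measure_bound}, it follows that
\begin{equation*}
	u^{+}(t_{\ast})-u^{-}(t_{\ast})=\int_{\R}\big(h^{+}(t_{\ast})-h^{-}(t_{\ast})\big)\,d\xi=\int_{\R}\partial_{\xi}\mu\,d\xi=0,\ \ \ \P\text{-a.s.}
\end{equation*}
Thus $u^{+}=u^{-}$ at every $t_{\ast}\in[0,T]$, so the right-continuous version $u^{+}$ also inherits the left-continuity of $u^{-}$ and is therefore $\P$-a.s. continuous into $L^{1}(\O)$, which is the desired representative. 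The main obstacle is the weak-to-strong upgrade of the second step: the indicator/mass argument itself is cheap, but controlling the high-$\xi$ tail \emph{simultaneously for all} $t\in[0,T]$ along a single null set is precisely what forces the use of the strong $\E\,\esssup$ form of the decay estimate \eqref{decay2} rather than the weaker $\esssup\,\E$ bound \eqref{decay1}; securing this uniform tail control, and the accompanying vanishing of the $\xi$-mass of the time-atoms of $m$, is where the real work lies.
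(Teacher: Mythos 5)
Your proposal is correct and follows essentially the same route the paper takes, which is simply to invoke the argument of \cite[Corollary 4.8]{gess18}: pass from the left/right-continuous distributional representatives $h^{\pm}$ to strong $L^{1}$-continuity via the identity $\int_{\R}\abs{\mathbf{1}_{a>\xi}-\mathbf{1}_{b>\xi}}d\xi=\abs{a-b}$ and the $h^{2}=h$ trick on compact $\xi$-regions, control the tails uniformly in $t$ by the $\E\,\esssup$ decay estimate \eqref{decay2}, and kill the time-atoms of $m$ by integrating $\partial_{\xi}\mu$ over $\xi$. Your write-up correctly identifies both key points (the strong form of the decay and the vanishing of the atoms' $\xi$-mass), so nothing further is needed.
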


Now we have all in hand to prove Theorem \ref{main_thm}.

\subsection{Proof of Theorem \ref{main_thm}}

First, we remark that the $L^{1}$-contraction property \eqref{L1_contraction_kinetic} of generalized kinetic solutions follows directly from Proposition \ref{L1_contraction} and Corollay \ref{continuity_L1}. This, implies that the uniqueness of generalized kinetic solution to equation \eqref{limit_para}. Hence, it only remains to prove the existence of a generalized kinetic solution to \eqref{limit_para}. Here we will only provide the proof in the case when Hypothesis \ref{Hypothesis3}\ref{deg_condition} holds, as the same argument can be applied under Hypothesis \ref{Hypothesis3}\ref{nondeg_condition}.

Fix $u_{0}\in L^{1}(\mathcal{O})$. Let $\{u_{0,\kappa}\}_{\kappa>0}\subset L^{2\theta_{2}}(\O)$ be a sequence satisfying that $u_{0,\kappa}\to u_{0}$ in $L^{1}(\mathcal{O})$ as $\kappa\to0$. Under Hypotheses \ref{Hypothesis1}, \ref{Hypothesis2} and \ref{Hypothesis3}\ref{deg_condition}, from Theorem \ref{kin_deg} we know that for every $\kappa>0$ there exists a unique kinetic solution $u_{\kappa}$ for equation \eqref{limit_para} with the initial data $u_{0,\kappa}$. Moreover, due to \eqref{L1_contraction_kinetic} we obtain that $\{u_{\kappa}\}$ is Cauchy in $L^{\infty}([0,T];L^{1}(\Omega;L^{1}(\O)))$, so there exists some $u\in L^{\infty}([0,T];L^{1}(\Omega;L^{1}(\mathcal{O})))$ such that
\begin{equation}\label{cauchy_L1}
	\lim_{\kappa\to0}\sup_{t\in[0,T]}\E\norm{u_{\kappa}(t)-u(t)}_{L^{1}(\mathcal{O})}=0.
\end{equation}
Thanks to \eqref{energy_kinetic_Lp} and the uniform bound of $\{u_{0,\kappa}\}$ in $L^{1}(\O)$, we have for every $p\geq1$
\begin{equation*}
	\sup_{\kappa>0}\ \E\esssup_{t\in[0,T]}\norm{u_{\kappa}(t)}_{L^{1}(\O)}^{p}<C_{T,p}\Big(1+\norm{u_{0}}_{L^{1}(\O)}^{p}\Big),
\end{equation*}
so by lower-semicontinuity of the norm we have 
\begin{equation}
	\E\esssup_{t\in[0,T]}\norm{u(t)}_{L^{1}(\O)}^{p}<C_{T,p}\Big(1+\norm{u_{0}}_{L^{1}(\O)}^{p}\Big).
\end{equation}
Moreover, for every $p>1$ and every $k>0$, by Lemma \ref{kinetic_measure_bound} we have 
\begin{equation}
	\sup_{\kappa}\E\big\lvert m^{\kappa}([0,T]\times\mathcal{O}\times[-k,k])\big\rvert^{p}\leq C(k,p),
\end{equation}
which implies the sequence $(m^{\kappa})$ is bounded in $L^{p}(\Omega;\mathcal{M}(B_{k}))$, where $B_{k}:=[0,T]\times\mathcal{O}\times[-k,k]$. By proceeding as in the proof of \cite[Theorem 4.9]{gess18}, we can extract a subsequence and a random Borel measure $m$ on $[0,T]\times\mathcal{O}\times\mathbb{R}$ such that $m^{\kappa}\rightharpoonup^{\ast}m$ weakly$^{\ast}$ in $L^{p}(\Omega;\mathcal{M}(B_{k}))$ for every $k\in\mathbb{N}$, and show that $m$ is a generalized kinetic measure (see Definitions \ref{def_kinetic_measure}). We shall now verify that $(u,m)$ is a generalized kinetic solution of equation \eqref{limit_para}. Following the same approach as in the proof of \cite[Theorem 4.9]{gess18}, we only need to verify that conditions \eqref{div_L2} and \eqref{chain_rule} in Definitions \ref{def_gen_kinetic_solution} hold.

Indeed, we first take arbitrary $0\leq \phi\in C^{\infty}_{c}(\mathbb{R})$, then by Lemma \ref{kinetic_measure_bound_2} we have
\begin{equation}\label{div_uniform_bound}
	\begin{array}{ll}
		&\ds{\sup_{\kappa>0}\ \E\int_{0}^{T}\int_{\mathcal{O}}\Big\lvert \text{div}\int_{0}^{u_{\kappa}}\sqrt{b(\xi)\phi(\xi)}d\xi\Big\rvert^{2}dxdt } \\
		\vs
		&\ds{= \sup_{\kappa>0}\ \E\int_{[0,T]\times\mathcal{O}\times\mathbb{R}}\phi(\xi)dm^{\kappa}(t,x,\xi) }\\
		\vs
		&\ds{\leq  C\big(T,\text{supp}(\phi),\norm{\phi}_{\infty},\norm{u_{0}}_{L^{1}(\mathcal{O})}\big) }.
	\end{array}
\end{equation}
Moreover, due to \eqref{cauchy_L1} and the fact that $\sqrt{\phi}\in C_{c}(\mathbb{R})$, we conclude using integration by parts that, for all $\varphi\in C([0,T];C^{1}_{c}(\mathcal{O}))$, $\psi\in L^{\infty}(\Omega)$,
\begin{equation*}
	\begin{array}{ll}
		&\ds{\Bigg\lvert \E\psi\int_{0}^{T}\int_{\mathcal{O}}\Bigg[\text{div}\int_{0}^{u_{\kappa}}\sqrt{b(\xi)\phi(\xi)}d\xi-\text{div}\int_{0}^{u}\sqrt{b(\xi)\phi(\xi)}d\xi\Bigg]\varphi(t,x)dxdt\Bigg\rvert }\\
		\vs
		&\ds{\leq CT\big(\theta,\phi\big)\norm{\psi}_{L^{\infty}(\Omega)}\norm{\varphi}_{C([0,T];C^{1}_{c}(\mathcal{O}))}\cdot\sup_{t\in[0,T]}\E\norm{u_{\kappa}(t)-u(t)}_{L^{1}(\O)}\to0 },
	\end{array}
\end{equation*}
which implies that 
\begin{equation}\label{weak_convergence_L1}
	\text{div}\int_{0}^{u_{\kappa}}\sqrt{b(\xi)\phi(\xi)}d\xi\rightharpoonup \text{div}\int_{0}^{u}\sqrt{b(\xi)\phi(\xi)}d\xi,\ \ \ \ \text{in}\ \ L^{2}(\Omega\times[0,T]\times\O).
\end{equation}
 Now, let $0\leq\phi\in C_{c}(\mathbb{R})$, we choose a sequence $(\phi_{\epsilon})_{\epsilon>0}$, $0\leq \phi_{\epsilon}\in C^{\infty}_{c}(\mathbb{R})$ with $\text{supp}(\phi_{\epsilon})\subseteq\text{supp}(\phi)$, such that $\phi_{\epsilon}\to\phi$ in $C(\mathbb{R})$ as $\epsilon\to0$. For every $\kappa>0$, it is easy to check that
\begin{equation*}
	\text{div}\int_{0}^{u_{\kappa}}\sqrt{b(\xi)\phi_{\epsilon}(\xi)}d\xi\rightharpoonup \text{div}\int_{0}^{u_{\kappa}}\sqrt{b(\xi)\phi(\xi)}d\xi,\ \ \ \ \text{in}\ \ L^{2}(\Omega\times[0,T]\times\O),
\end{equation*}  
and it follows from \eqref{div_uniform_bound} that 
\begin{equation*}
	\begin{array}{ll}
		&\ds{\E\int_{0}^{T}\int_{\mathcal{O}}\Big\lvert \text{div}\int_{0}^{u_{\kappa}}\sqrt{b(\xi)\phi(\xi)}d\xi\Big\rvert^{2}dxdt \leq  \sup_{\epsilon\in(0,1)}\ \E\int_{0}^{T}\int_{\mathcal{O}}\Big\lvert \text{div}\int_{0}^{u_{\kappa}}\sqrt{b(\xi)\phi_{\epsilon}(\xi)}d\xi\Big\rvert^{2}dxdt }\\
		\vs
		&\ds{\quad\quad\quad\quad\quad\quad\quad\quad\quad\quad\quad\quad\quad\quad\quad\quad\quad\quad\leq C\big(T,\text{supp}(\phi),\norm{\phi}_{\infty},\norm{u_{0}}_{L^{1}(\mathcal{O})}\big),\ \ \ \forall\tau>0 }.
	\end{array}
\end{equation*}
Then applying the same argument as before, we conclude that as $\kappa\to0$
\begin{equation}\label{weak_convergence_L2}
	\text{div}\int_{0}^{u_{\kappa}}\sqrt{b(\xi)\phi(\xi)}d\xi\rightharpoonup \text{div}\int_{0}^{u}\sqrt{b(\xi)\phi(\xi)}d\xi,\ \ \ \ \text{in}\ \ L^{2}(\Omega\times[0,T]\times\O).
\end{equation}
This completes the proof of \eqref{div_L2}. Concerning the chain rule formula \eqref{chain_rule}, since $u_{\kappa}$ is a kinetic solution, we know \eqref{chain_rule} holds for each $u_{\kappa}$, and so it holds for $u$ as a consequence of \eqref{weak_convergence_L2}.

\section{Invariant measure}
\label{sec_inv}

According to Theorem \ref{main_thm}, we have that for every $T>0$ and $u_{0}\in L^{1}(\O)$ there exists a unique generalized kinetic solution $u\in L^{1}(\Omega;L^{\infty}([0,T];L^{1}(\O)))$ which has almost surely continuous contrajectory in $L^{1}(\O)$. This allows us to define $P_{t}$ the transition semigroup associated to equation \eqref{limit_para} by 
\begin{equation*}
	P_{t}\varphi(z):=\E\varphi(u^{z}(t)),\ \ \ t\geq0,\ \ \ z\in L^{1}(\mathcal{O}),
\end{equation*}
for every $\varphi\in C_{b}(L^{1}(\mathcal{O}))$, where $u^{z}$ is the unique generalized kinetic solution with initial data $z$.

In this section, we will study invariant measures for $P_{t}$ in the case when the diffusion $b$ in \eqref{limit_para} is non-degenerate. Moreover, we assume that condition \eqref{polynomial_sigma} holds.

\subsection{Existence of an invariant measure of $P_{t}$ in $L^{1}(\mathcal{O})$}

Our result on the existence of an invariant measure for $P_{t}$ is given in the following theorem.

\begin{thm}\label{inv_existence}
	Assume Hypotheses \ref{Hypothesis1}, \ref{Hypothesis3}\ref{nondeg_condition} and condition \eqref{polynomial_sigma} hold. Then $P_{t}$, $t\geq0$, admits at least one invariant measure $\nu$ in $L^{1}(\mathcal{O})$. Moreover, $\text{supp}(\nu)\subset H^{1}$.
\end{thm}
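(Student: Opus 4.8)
The plan is to apply the Krylov--Bogoliubov theorem, which requires two ingredients: the Feller property of $P_{t}$ on $L^{1}(\O)$ and the tightness in $L^{1}(\O)$ of the family of time-averaged measures attached to a fixed initial datum. The Feller property follows at once from the $L^{1}$-contraction \eqref{L1_contraction_kinetic}: if $z_{n}\to z$ in $L^{1}(\O)$, then $\E\norm{u^{z_{n}}(t)-u^{z}(t)}_{L^{1}(\O)}\le\norm{z_{n}-z}_{L^{1}(\O)}\to 0$, so $u^{z_{n}}(t)\to u^{z}(t)$ in probability and hence, along a subsequence, $\P$-a.s.; for $\varphi\in C_{b}(L^{1}(\O))$ the dominated convergence theorem then gives $P_{t}\varphi(z_{n})\to P_{t}\varphi(z)$, so $P_{t}$ maps $C_{b}(L^{1}(\O))$ into itself.

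The heart of the argument is a uniform-in-time energy estimate, obtained from the generalized It\^o formula applied to $\psi(\xi)=\xi^{2}$ (equivalently, from \eqref{energy_Lp} with $p=2$). Taking $z_{0}=0\in L^{2\theta}(\O)$, so that $u^{0}$ is a genuine weak solution with $u^{0}\in L^{2}(\Omega;L^{2}([0,T];H^{1}))$ for every $T$, I would write
\begin{equation*}
\E\norm{u^{0}(t)}_{H}^{2}+2\,\E\int_{0}^{t}\!\!\int_{\O} b(u^{0})\abs{\nabla u^{0}}^{2}\,dx\,ds=\E\int_{0}^{t}\norm{\sigma(u^{0}(s))}_{L_{2}(\mathcal{U},H)}^{2}\,ds.
\end{equation*}
Using $b\ge b_{0}$, the Poincar\'e inequality $\norm{u}_{H}\le\alpha_{1}^{-1/2}\norm{u}_{H^{1}}$, and expanding the growth bound \eqref{polynomial_sigma} with Young's inequality (the terms of order $\norm{u}_{H}^{1+\alpha}$ and $\norm{u}_{H}^{2\alpha}$ with $\alpha\in(0,1)$ being absorbed), one gets $\norm{\sigma(h)}_{L_{2}(\mathcal{U},H)}^{2}\le(\lambda^{2}+\eta)\norm{h}_{H}^{2}+C_{\eta}$ for any $\eta>0$. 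Since $\lambda^{2}<2\alpha_{1}b_{0}$, choosing $\eta$ small yields
\begin{equation*}
\frac{d}{dt}\,\E\norm{u^{0}(t)}_{H}^{2}\le-(2\alpha_{1}b_{0}-\lambda^{2}-\eta)\,\E\norm{u^{0}(t)}_{H}^{2}+C,
\end{equation*}
whence Gr\"onwall's lemma gives $\sup_{t\ge 0}\E\norm{u^{0}(t)}_{H}^{2}<\infty$. Reinserting this into the energy identity produces the crucial time-averaged gradient bound $\sup_{T>0}\frac{1}{T}\int_{0}^{T}\E\norm{u^{0}(t)}_{H^{1}}^{2}\,dt\le C$.

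With this estimate the conclusion is routine. Define the Krylov--Bogoliubov measures $\nu_{T}(\cdot)=\frac{1}{T}\int_{0}^{T}\P\bigl(u^{0}(t)\in\cdot\bigr)\,dt$ on $L^{1}(\O)$. The gradient bound gives $\int_{L^{1}(\O)}\norm{z}_{H^{1}}^{2}\,\nu_{T}(dz)\le C$ uniformly in $T$, so by Chebyshev $\nu_{T}(\{\norm{z}_{H^{1}}>R\})\le C/R^{2}$. Since $H^{1}=W^{1,2}_{0}(\O)$ embeds compactly into $L^{1}(\O)$, the sublevel sets $\{\norm{z}_{H^{1}}\le R\}$ are compact in $L^{1}(\O)$, so $\{\nu_{T}\}$ is tight; by Prokhorov's theorem a subsequence converges weakly to a probability measure $\nu$, which the Krylov--Bogoliubov theorem (together with the Feller property) identifies as an invariant measure for $P_{t}$. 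Finally, the lower semicontinuity of $z\mapsto\norm{z}_{H^{1}}^{2}$ on $L^{1}(\O)$ transfers the uniform bound to the limit, giving $\int\norm{z}_{H^{1}}^{2}\,d\nu<\infty$ and hence $\nu(H^{1})=1$, i.e.\ $\mathrm{supp}(\nu)\subset H^{1}$.

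The main obstacle is not the abstract scheme but the rigorous justification of the energy identity in a framework where solutions are only generalized kinetic solutions. I would circumvent this by fixing the regular datum $z_{0}=0\in L^{2\theta}(\O)$, for which Theorem~\ref{weak_nondeg} furnishes a bona fide weak solution with the $H^{1}$-regularity legitimizing the It\^o computation and the estimate \eqref{energy_Lp}. The absorption of the superlinear-but-subquadratic noise terms, resting precisely on $\alpha<1$ and on the strict inequality $\lambda<\sqrt{2\alpha_{1}b_{0}}$, is the one place where the hypotheses must be used sharply.
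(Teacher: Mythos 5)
Your proposal is correct and follows essentially the same route as the paper: the Feller property from the $L^{1}$-contraction, the It\^o/energy estimate for the weak solution started at $0\in L^{2\theta}(\O)$ with the noise term absorbed via \eqref{polynomial_sigma} and the Poincar\'e inequality using $\lambda^{2}<2\alpha_{1}b_{0}$, and then Krylov--Bogoliubov with tightness coming from the time-averaged $H^{1}$ bound, Chebyshev, and the compact embedding $H^{1}\hookrightarrow L^{1}(\O)$, which also yields $\mathrm{supp}(\nu)\subset H^{1}$. No substantive differences from the paper's argument.
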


\begin{proof}
First we remark that the $L^{1}$-contraction property \eqref{L1_contraction_kinetic} implies, in particular, that transition semigroup $P_{t}$, $t\geq0$ is Feller in $L^{1}(\mathcal{O})$.

 Now let $u_{0}\in L^{2\theta}(\O)$ and  $u\in L^{2\theta}(\Omega;L^{\infty}([0,T];L^{2\theta}(\O)))\cap L^{2}(\Omega;L^{2}([0,T];H^{1}))$ be the unique weak solution of equation \eqref{limit_para}. As a consequence of the It{\^o} formula, we have
\begin{equation*}
	\begin{array}{ll}
		\ds{d\norm{u(t)}_{H}^{2} \leq\Big(-2b_{0}\norm{u(t)}_{H^{1}}^{2}dt+\norm{\sigma(u(t))}_{L_{2}(\mathcal{U},H)}^{2}\Big)dt+ 2\inner{u(t),\sigma(u(t)) dw^{Q}(t)}_{H} }
	\end{array}
\end{equation*}
Due to condition \eqref{polynomial_sigma}, we have for every small $\delta>0$
\begin{equation}\label{noise_term_estimate}
	\norm{\sigma(u(t))}_{L_{2}(\mathcal{U},H)}^{2}ds\leq (\lambda^{2}+\delta) \norm{u(t)}_{H}^{2}+c(\delta).
\end{equation}
Since $\lambda^{2}<2\alpha_{1}b_{0}$, if we take the expectation, then there exist some constants $\tilde{\lambda}>0$ and $c>0$ such that for every $t\geq0$
\begin{equation*}
	\frac{d}{dt}\ \E\norm{u(t)}_{H}^{2}+\tilde{\lambda}\E\norm{u(t)}_{H^{1}}^{2}\leq c,
\end{equation*}
which implies that for every $t\geq0$ 
\begin{equation}\label{energy_estimate_inv}
	\E\norm{u(t)}_{H}^{2}\leq c\Big(1+e^{-\tilde{\lambda}t}\norm{u_{0}}_{H}^{2}\Big)\ \ \ \ \ \text{and}\ \  \ \ \ \E\int_{0}^{t}\norm{u(s)}_{H^{1}}^{2}ds\leq c\Big(t+\norm{u_{0}}_{H}^{2}\Big).
\end{equation}
Then, if for every $R>0$ and $t>0$ we denote 
\begin{equation*}
	B_{R}:=\big\{x\in L^{1}(\O):\norm{x}_{H^{1}}\leq R\big\},\ \ \ \ \ R_{t}:=\frac{1}{t}\int_{0}^{t}P_{s}^{\ast}\delta_{0}\ ds,
\end{equation*}
then thanks to \eqref{energy_estimate_inv}, for every $R>0$ and $t>0$ we have 
\begin{equation}\label{proof_support}
	R_{t}(B_{R}^{c})=\frac{1}{t}\int_{0}^{t}\P\big(\norm{u^{0}(s)}_{H^{1}}>R\big)ds\leq \frac{c}{R^{2}}
\end{equation} 
Since the embedding $H^{1}\hookrightarrow L^{1}(\mathcal{O})$ is compact, this implies that the family of probability measures $\{R_{t}\}_{t>0}$ is tight in $L^{1}(\mathcal{O})$. By the Krylov-Bogoliubov theorem, we conclude that the semigroup $P_{t}$ admits an invariant measure $\nu$ in $L^{1}(\mathcal{O})$. Moreover, as a direct consequence of \eqref{proof_support}, $\text{supp}(\nu)\subset H^{1}$.

\end{proof}

\subsection{Uniqueness of the invariant measure of $P_{t}$ in $L^{1}(\mathcal{O})$}

In the study on the uniqueness of invariant measure, we assume that the noise in equation \eqref{limit_para} is additive; that is, $\sigma_{i}(x)=\sigma_{i}(x,r)$, $\forall r\in\mathbb{R}$. In this case, Hypothesis \ref{Hypothesis1} implies that $\sigma_{i}\in H^{1}\cap L^{\infty}(\O)$, with 
\begin{equation}\label{sigma_additive_condition}
	\sum_{i=1}^{\infty}\norm{\sigma_{i}}_{L^{\infty}(\O)}^{2}<+\infty,\ \ \ \ \  \text{Tr}_{H}\Big[\big((-A)^{\frac{1}{2}}\sigma \big)\big((-A)^{\frac{1}{2}}\sigma \big)^{\ast}\Big]=\sum_{i=1}^{\infty}\norm{\nabla\sigma_{i}}_{H}^{2}\leq c\sum_{i=1}^{\infty}\lambda_{i}^{2}<+\infty.
\end{equation} 
 We need to remark that in this case, from Remark \ref{L1_almost_surely_weak_solution} and our constuction of generalized kinetic solutions (see Section \ref{sec_proof_main_thm}), one can see that $L^{1}$-contraction \eqref{L1_contraction_kinetic} holds for generalized kinetic solutions in the almost sure sense. Namely, if $u_{1},u_{2}$ are generalized kinetic solutions to \eqref{limit_para} with initial data $u_{1,0},u_{2,0}\in L^{1}(\O)$, respectively, then it holds that for every $t\in[0,T]$
\begin{equation}\label{L1_contraction_almost_surely}
	\norm{(u_{1}(t)-u_{2}(t))^{+}}_{L^{1}(\O)}\leq \norm{(u_{1,0}-u_{2,0})^{+}}_{L^{1}(\O)},\ \ \ \P\text{-a.s.}.
\end{equation}
As a particular case of Hypothesis \ref{Hypothesis3}\ref{nondeg_condition}, we will assume that the diffusion $b$ is bounded; that is, $\theta=1$ in \eqref{nondeg_growth}. In other words, we assume that $b\in C^{1}(\mathbb{R})$ and there exist some constants $b_{0},b_{1}>0$ such that 
\begin{equation*}
	b_{0}\leq b(r)\leq b_{1},\ \ \ \ r\in\mathbb{R}.
\end{equation*} 

We start with the following irreducibility condition uniformly with respect to the initial condition on bounded sets of $H$.
\begin{prop}[Irreducibility]\label{irreducible}
	Assume Hypotheses \ref{Hypothesis1} and \ref{Hypothesis3}\ref{nondeg_condition}. In addition, we assume the noise in \eqref{limit_para} is additive and the diffusion $b$ is bounded. Then for every $M,\epsilon>0$, there exists $t_{\ast}=t_{\ast}(M,\epsilon)>0$ such that
	\begin{equation}
		\inf_{\norm{z}_{H}\leq M}\P\big(\norm{u^{z}(t)}_{H}<\epsilon\big)>0,\ \ \ t\geq t_{\ast},
	\end{equation}
	where $u^{z}$ is the unique weak solution of equation \eqref{limit_para} with the initial data $z\in H$.
\end{prop}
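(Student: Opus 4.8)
\emph{Strategy.} The plan is to exploit the additive structure together with the bounded, non-degenerate diffusion to reduce the problem to a deterministic dissipation estimate perturbed by the stochastic convolution, to obtain a positive lower bound at one fixed time on a suitable ball, and then to pass to all large times by the Markov property combined with the a priori bound \eqref{energy_estimate_inv}.

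\textbf{Step 1 (decomposition and dissipativity).} I would introduce the stochastic convolution $z_A(t):=\int_0^t e^{(t-s)A}\sigma\,dw(s)$, the unique solution of $dz_A=Az_A\,dt+\sigma\,dw$ with $z_A(0)=0$. By \eqref{sigma_additive_condition} we have $(-A)^{1/2}\sigma\in L_2(\mathcal U,H)$, so $z_A$ is a centered Gaussian process with $\P$-almost surely continuous trajectories in $H^1$. Writing $y:=u^z-z_A$, subtracting the two weak formulations cancels the stochastic integral, so that $y$ solves pathwise the deterministic equation $\partial_t y=\Delta\bigl(\mathfrak b(u^z)-z_A\bigr)$ with $y(0)=z$ and $y(t)\in H^1$. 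Testing with $y$ and using $\nabla u^z=\nabla y+\nabla z_A$ gives
\begin{equation*}
\tfrac12\tfrac{d}{dt}\norm{y}_H^2=-\int_{\O}b(u^z)\,\abs{\nabla y}^2\,dx+\int_{\O}\bigl(1-b(u^z)\bigr)\nabla z_A\cdot\nabla y\,dx .
\end{equation*}
Since $b_0\leq b\leq b_1$, Young's inequality and the Poincar\'e inequality $\norm{\nabla y}_H^2\geq\alpha_1\norm{y}_H^2$ yield $\tfrac{d}{dt}\norm{y}_H^2\leq-b_0\alpha_1\norm{y}_H^2+b_0^{-1}\norm{1-b}_\infty^2\norm{\nabla z_A}_H^2$, and hence, by Gronwall's lemma,
\begin{equation*}
\norm{y(t)}_H^2\leq e^{-b_0\alpha_1 t}\norm{z}_H^2+\frac{\norm{1-b}_\infty^2}{b_0}\int_0^t e^{-b_0\alpha_1(t-s)}\norm{\nabla z_A(s)}_H^2\,ds .
\end{equation*}

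\textbf{Step 2 (positive probability at a fixed time).} Fix a radius $M'>0$, to be chosen in Step 3, and fix $T_0>0$ with $e^{-b_0\alpha_1T_0}M'^2<\epsilon^2/16$. On the event $\Omega_\delta:=\{\sup_{s\in[0,T_0]}\norm{z_A(s)}_{H^1}\leq\delta\}$ the estimate above and $\norm{z_A(T_0)}_H\leq\alpha_1^{-1/2}\delta$ give, uniformly over $\norm{z}_H\leq M'$,
\begin{equation*}
\norm{u^z(T_0)}_H\leq\norm{y(T_0)}_H+\norm{z_A(T_0)}_H\leq\Bigl(\tfrac{\epsilon^2}{16}+\tfrac{\norm{1-b}_\infty^2}{b_0^2\alpha_1}\delta^2\Bigr)^{1/2}+\alpha_1^{-1/2}\delta ,
\end{equation*}
so that for $\delta$ small enough the right-hand side is $<\epsilon$. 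Because $z_A$ is centered Gaussian with continuous $H^1$-paths issuing from $0$, the ball $\Omega_\delta$ meets the support of its law, whence $p_0:=\P(\Omega_\delta)>0$; thus $\inf_{\norm{z}_H\leq M'}\P(\norm{u^z(T_0)}_H<\epsilon)\geq p_0>0$.

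\textbf{Step 3 (uniformity in $t$).} The naive event $\Omega_\delta$ taken over $[0,t]$ would have probability tending to $0$ as $t\to\infty$; this is the main obstacle, and I would circumvent it by the Markov property. From \eqref{energy_estimate_inv} (applicable here since additive noise satisfies \eqref{polynomial_sigma} with $\lambda=0$) one has $\sup_{s\geq0}\E\norm{u^z(s)}_H^2\leq c(1+M^2)$ for $\norm{z}_H\leq M$; fixing $M'$ with $c(1+M^2)/M'^2\leq\tfrac12$ gives $\P(\norm{u^z(s)}_H\leq M')\geq\tfrac12$ for all $s\geq0$. Setting $t_\ast:=T_0(M',\epsilon)$ and $g(v):=\P(\norm{u^v(T_0)}_H<\epsilon)$, Step~2 gives $g(v)\geq p_0\,\mathbf{1}_{\norm{v}_H\leq M'}$, so the Markov property at time $t-T_0\geq0$ yields, for every $t\geq t_\ast$ and $\norm{z}_H\leq M$,
\begin{equation*}
\P\bigl(\norm{u^z(t)}_H<\epsilon\bigr)=\E\,g\bigl(u^z(t-T_0)\bigr)\geq p_0\,\P\bigl(\norm{u^z(t-T_0)}_H\leq M'\bigr)\geq\tfrac{p_0}{2}>0 ,
\end{equation*}
which is the claimed uniform lower bound. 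The remaining technical points---the justification of the pathwise energy identity from the regularity of weak solutions, and the $H^1$-continuity and Gaussian-support properties of $z_A$ under \eqref{sigma_additive_condition}---are standard.
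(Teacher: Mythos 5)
Your proof is correct, and Steps 1--2 coincide with the paper's argument: the paper likewise sets $W_A(t)=\int_0^te^{(t-s)A}\sigma\,dw(s)$, writes $\eta^z=u^z-W_A$, and derives the pathwise dissipation bound $\norm{\eta^z(t)}_H^2\leq c\bigl(e^{-\overline{\lambda}t}\norm{z}_H^2+\sup_{s\in[0,t]}\norm{W_A(s)}_{H^1}^2\bigr)$ using only $b_0\leq b\leq b_1$, exactly as you do (your constants differ only cosmetically). Where you genuinely depart from the paper is Step 3. The paper concludes directly from this estimate: for $t\geq t_\ast$ it bounds $\P(\norm{u^z(t)}_H<\epsilon)$ from below by $\P\bigl(\sup_{s\in[0,t]}\norm{W_A(s)}_{H^1}<C_M\epsilon\bigr)$, which is positive for each fixed $t$ by the support property of the Gaussian law of $W_A$ but degenerates as $t\to\infty$. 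You instead run the smallness-of-noise event only over a window of fixed length $T_0$ and restore arbitrary large times via the Markov property combined with the Chebyshev bound coming from \eqref{energy_estimate_inv} (valid here since additive noise satisfies \eqref{polynomial_sigma} with $\lambda=0$). This buys a lower bound $p_0/2$ that is uniform in $t\geq t_\ast$, which is strictly stronger than what the paper's proof of this proposition yields and is in fact the kind of $l$-independent bound that the paper later needs (and re-derives via Corollary \ref{control_ball} and the strong Markov property) in the proof of Theorem \ref{uniqueness_inv}. The price is the extra measurability/Markov-property bookkeeping for $v\mapsto\P(\norm{u^v(T_0)}_H<\epsilon)$, which, as you note, is standard given the $L^1$-contraction and the continuity of trajectories established earlier in the paper.
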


\begin{proof}
	If we define
	\begin{equation*}
		W_{A}(t):=\int_{0}^{t}e^{(t-s)A} \sigma dw(s),\ \ \ \forall t\geq0,
	\end{equation*}
	where $A:=\Delta$ is the Laplacian introduced in Section \ref{sec_assumption}, then it is well-known that
	\begin{equation*}
		W_{A}\in L^{2}(\Omega;C([0,T];H))\bigcap L^{2}([0,T];H^{1})
	\end{equation*}
	 is the unqiue weak solution of problem
	\begin{equation*}
		\le\{\begin{array}{l}
			\ds{\partial_{t}\xi(t,x)=\Delta\xi(t,x)+\sigma\partial_{t}w(t,x), }\\[10pt]
			\ds{\xi(0,x)=0,\ \ \ \xi(t,\cdot)|_{\partial\mathcal{O}}=0. }
		\end{array}\r.
	\end{equation*}
  Moreover, thanks to \eqref{sigma_additive_condition}, it is not hard to show that the trajectories of $W_{A}$ are in $C^{\delta}([0,T];H^{1})$ for arbitrary $\delta\in[0,1/2)$ (see \cite[Section 5.4]{daprato2014}). Now, if we denote $\eta^{z}(t):=u^{z}(t)-W_{A}(t)$, then we know $\eta^{z}\in C([0,T];H)\bigcap L^{2}([0,T];H^{1})$, $\mathbb{P}$-a.s., solves the following deterministic problem 
		\begin{equation*}
		\le\{\begin{array}{l}
			\ds{\partial_{t}\eta(t,x)=\text{div}\big(b(u^{z}(t,x))\nabla\eta(t,x)\big)+\text{div}\big[(b(u^{z}(t,x))-I)\nabla W_{A}(t,x)\big], }\\[10pt]
			\ds{\eta(0,x)=z(x),\ \ \ \eta(t,\cdot)|_{\partial\mathcal{O}}=0. }
		\end{array}\r.
	\end{equation*}
	 This implies that for every $t>0$,
	\begin{equation*}
		\begin{array}{ll}
			\ds{\frac{1}{2}\frac{d}{dt}\norm{\eta^{z}(t)}_{H}^{2} \leq -b_{0}\norm{\eta^{z}(t)}_{H^{1}}^{2}-\Inner{(b(u^{z}(t))-I)\nabla W_{A}(t),\nabla\eta^{z}(t)}_{H} }
		\end{array}
	\end{equation*}
	Since we are assuming $b$ is bounded, so that for any small $\delta>0$
	\begin{equation*}
		\big\lvert \Inner{(b(u^{z}(t))-I)\nabla W_{A}(t),\nabla\eta^{z}(t)}_{H}\big\rvert\leq \delta\norm{\eta^{z}(t)}_{H^{1}}^{2}+c(\delta)\norm{W_{A}(t)}_{H^{1}}^{2}.
	\end{equation*}
	Hence, there exists for some constants $c>0$ and $\overline{\lambda}>0$ such that
	\begin{equation*}
		\norm{\eta^{z}(t)}_{H}^{2}\leq c\Big(e^{-\overline{\lambda}t}\norm{z}_{H}^{2}+\sup_{s\in[0,t]}\norm{W_{A}(s)}_{H^{1}}^{2}\Big),
	\end{equation*}
	and so that
	\begin{equation}\label{irred_key}
		\norm{u^{z}(t)}_{H}^{2}\leq 2\Big(\norm{\eta^{z}(t)}_{H}^{2}+\norm{W_{A}(t)}_{H}^{2}\Big)\leq C\Big(e^{-\overline{\lambda}t}\norm{z}_{H}^{2}+\sup_{s\in[0,t]}\norm{W_{A}(s)}_{H^{1}}^{2}\Big).
	\end{equation}
	This implies that for every $M,\epsilon>0$, there exists $t_{\ast}=t_{\ast}(M,\epsilon)>0$ such that
	\begin{equation*}
		\inf_{\norm{z}_{H}\leq M}\P \big(\norm{u^{z}(t)}_{H}<\epsilon\big)\geq \P\Big(\sup_{s\in[0,t]}\norm{W_{A}(s)}_{H^{1}}<C_{M}\epsilon\Big),\ \ \ t\geq t_{\ast},
	\end{equation*}
	for some constant $C_{M}>0$ depending on $M$. Finally, for all $t\geq t_{\ast}$, since $W_{A}(\cdot)$ is full in $C([0,t];H^{1})$, we have
	\begin{equation*}
		\inf_{\norm{z}_{H}\leq M}\P \big(\norm{u^{z}(t)}_{H}<\epsilon\big)\geq \P\Big(\sup_{s\in[0,t]}\norm{W_{A}(s)}_{H^{1}}<C_{M}\epsilon\Big)>0.
	\end{equation*}
\end{proof}

Thanks to \eqref{irred_key}, we can easily obtain the following corollary, which shows that the weak solution is small if the noise is small.
\begin{cor}\label{control_ball}
	 For every $M>0$ and every $\epsilon>0$, there exists $T=T(\epsilon,M)>0$ and $\delta=\delta(\epsilon)>0$ such that
	\begin{equation*}
		\frac{1}{T}\int_{0}^{T}\norm{u^{z}(t)}_{H}^{2}dt<\epsilon,\ \ \P\text{-a.s.},
	\end{equation*}
	given that
	\begin{equation*}
		\norm{z}_{H}\leq M\ \ \ \text{and}\ \ \ \sup_{t\in[0,T]}\norm{W_{A}(t)}_{H^{1}}\leq \delta,\ \ \P\text{-a.s.}.
	\end{equation*}
\end{cor}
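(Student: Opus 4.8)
The plan is to integrate the pointwise-in-time estimate \eqref{irred_key} established in the proof of Proposition \ref{irreducible} over the interval $[0,T]$, and to exploit the fact that the two terms on its right-hand side decouple cleanly: the contribution of the initial datum decays exponentially (so its time-average decays like $1/T$ and is killed by taking $T$ large), while the contribution of the stochastic convolution is controlled uniformly in time by $\delta$. Since \eqref{irred_key} was derived pathwise from a deterministic energy inequality for $\eta^{z}$, the entire argument can be carried out $\P$-almost surely, which is exactly the sense in which the conclusion is stated.

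First I would record that \eqref{irred_key} holds $\P$-a.s. for every $t\geq0$, namely
\begin{equation*}
	\norm{u^{z}(t)}_{H}^{2}\leq C\Big(e^{-\overline{\lambda}t}\norm{z}_{H}^{2}+\sup_{s\in[0,t]}\norm{W_{A}(s)}_{H^{1}}^{2}\Big).
\end{equation*}
Under the hypotheses $\norm{z}_{H}\leq M$ and $\sup_{s\in[0,T]}\norm{W_{A}(s)}_{H^{1}}\leq\delta$, and using $\sup_{s\in[0,t]}\norm{W_{A}(s)}_{H^{1}}\leq\sup_{s\in[0,T]}\norm{W_{A}(s)}_{H^{1}}$ for $t\in[0,T]$, this immediately yields the $\P$-a.s. bound $\norm{u^{z}(t)}_{H}^{2}\leq C\big(e^{-\overline{\lambda}t}M^{2}+\delta^{2}\big)$ valid for all $t\in[0,T]$.

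Next I would integrate this bound over $[0,T]$ and divide by $T$, obtaining $\P$-a.s.
\begin{equation*}
	\frac{1}{T}\int_{0}^{T}\norm{u^{z}(t)}_{H}^{2}dt\leq \frac{CM^{2}}{T}\int_{0}^{T}e^{-\overline{\lambda}t}dt+C\delta^{2}\leq \frac{CM^{2}}{\overline{\lambda}\,T}+C\delta^{2}.
\end{equation*}
The two summands are now handled separately and in the right order so as to produce the claimed dependence of the parameters. Since the coefficient of $\delta^{2}$ is the universal constant $C$ (independent of $T$ and $M$), I would first choose $\delta=\delta(\epsilon)>0$ small enough that $C\delta^{2}<\epsilon/2$, and then choose $T=T(\epsilon,M)>0$ large enough, e.g. $T>2CM^{2}/(\overline{\lambda}\,\epsilon)$, so that $CM^{2}/(\overline{\lambda}\,T)<\epsilon/2$. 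Adding the two estimates gives $\tfrac{1}{T}\int_{0}^{T}\norm{u^{z}(t)}_{H}^{2}dt<\epsilon$, $\P$-a.s., as required.

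In truth there is no serious obstacle here: the corollary is a direct averaging consequence of \eqref{irred_key}, and the only point requiring mild care is the bookkeeping of the constants, namely checking that $\delta$ can be taken to depend on $\epsilon$ alone while $T$ absorbs the dependence on $M$. This separation works precisely because the exponential term is the only one carrying the factor $\norm{z}_{H}^{2}\leq M^{2}$, and its time-average is of order $1/T$, independent of $\delta$.
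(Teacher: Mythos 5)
Your proof is correct and follows exactly the route the paper intends: the corollary is stated as a direct consequence of \eqref{irred_key}, and your time-averaging of that pathwise bound, with $\delta$ fixed first from the $\epsilon/2$-budget for the noise term and $T$ then chosen large to kill the $e^{-\overline{\lambda}t}M^{2}$ contribution, is precisely the argument. The bookkeeping showing $\delta=\delta(\epsilon)$ while $T=T(\epsilon,M)$ is also handled correctly.
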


Next, we will show that any weak solution enters a ball in $H$ of some fixed radius in finite time, which is a modification of the result in \cite[Section 4.1]{debussche15}.

\begin{lem}[Finite time to enter a ball]\label{finite_time}
	Assume Hypotheses \ref{Hypothesis1} and \ref{Hypothesis3}\ref{nondeg_condition}. In addition, we assume the noise in \eqref{limit_para} is additive and the diffusion $b$ is bounded. Then there exists a constant $K_{0}>0$ such that for every $u_{1,0},u_{2,0}\in  H$ and $T>0$, if we define the following stopping times:
	\begin{equation*}
		\tau_{l}=\inf\Big\{t\geq\tau_{l-1}+T:\norm{u_{1}(t)}_{H}^{2}+\norm{u_{2}(t)}_{H}^{2}\leq K_{0} \Big\},\ \ \ \tau_{0}=0,
	\end{equation*}
	where $u_{1},u_{2}$ are weak solutions to equation \eqref{limit_para} with initial data $u_{1,0},u_{2,0}$, respectively, then we have 
	\begin{equation*}
		\P(\tau_{l}<+\infty)=1,\ \ \ l\in\mathbb{N}.
	\end{equation*}
\end{lem}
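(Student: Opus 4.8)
The plan is to argue by induction on $l$, the base case $\tau_{0}=0<\infty$ being trivial. Assuming $\tau_{l-1}<\infty$ almost surely, set $s:=\tau_{l-1}+T$, a finite $(\mathcal{F}_{t})$-stopping time, and show that after $s$ the pair enters the ball almost surely. The engine is the dissipative bound \eqref{irred_key}, used in a \emph{restarted} form: repeating the computation leading to \eqref{irred_key} with the Ornstein--Uhlenbeck process started afresh at a time $a\ge0$ gives, for $i=1,2$ and $t\ge0$,
\[
\norm{u_{i}(a+t)}_{H}^{2}\le C\,e^{-\overline{\lambda}t}\norm{u_{i}(a)}_{H}^{2}+C\sup_{r\in[0,t]}\Big\|\int_{a}^{a+r}e^{(a+r-\rho)A}\sigma\,dw(\rho)\Big\|_{H^{1}}^{2}.
\]
The crucial feature is that the stochastic convolution on the right depends only on the increments of $w$ over $[a,a+t]$, and, since the noise is additive, it is the \emph{same} process for $u_{1}$ and $u_{2}$.

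Fix $L>0$ with $C e^{-\overline{\lambda}L}\le 1/2$ and restrict attention to the grid $s+kL$, $k\in\mathbb{N}$. Writing $X_{k}:=\norm{u_{1}(s+kL)}_{H}^{2}+\norm{u_{2}(s+kL)}_{H}^{2}$ and letting $V_{k}$ be the $H^{1}$-supremum (squared) of the stochastic convolution over the window $[s+kL,s+(k+1)L]$, the restarted bound yields $X_{k+1}\le \tfrac12 X_{k}+C' V_{k}$ for a suitable constant $C'$. By the strong Markov property $w(s+\cdot)-w(s)$ is a Wiener process independent of $\mathcal{F}_{s}$; hence, conditionally on $\mathcal{F}_{s}$, the $V_{k}$ are i.i.d., each distributed as $\sup_{[0,L]}\norm{W_{A}}_{H^{1}}^{2}$, with finite mean (by \eqref{sigma_additive_condition}, which gives $W_{A}\in L^{2}(\Omega;C([0,L];H^{1}))$) and satisfying $p:=\P(V_{0}<\delta)>0$ for every $\delta>0$, the positivity being exactly the full-support property of $W_{A}$ already used in Proposition \ref{irreducible}. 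I then fix $\delta>0$ and set $K_{0}:=8C'\delta$.

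Iterating the recursion over a block of $n$ consecutive indices on which $V_{k}<\delta$ gives $X_{(j+1)n}\le 2^{-n}X_{jn}+2C'\delta$, whereas iterating from $k=0$ shows $\sup_{k}\E[X_{k}\mid\mathcal{F}_{s}]<\infty$, so $(X_{k})$ is bounded in probability conditionally on $\mathcal{F}_{s}$. Partitioning $\mathbb{N}$ into disjoint blocks of length $n$, the events ``all $V_{k}<\delta$ in block $j$'' are conditionally i.i.d.\ with probability $p^{n}>0$, so by the second Borel--Cantelli lemma infinitely many of them occur almost surely; combining this with the conditional boundedness in probability of $X_{jn}$ and taking $n$ large, one obtains $\liminf_{k}X_{k}\le 4C'\delta<K_{0}$ almost surely. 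Thus $X_{k}\le K_{0}$ at some grid point $s+kL\ge\tau_{l-1}+T$, i.e.\ $\tau_{l}<\infty$ almost surely, which closes the induction with $K_{0}$ a universal constant independent of $l$ and of the initial data.

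The main obstacle, and the step demanding the most care, is the interaction between the \emph{random} starting time $s=\tau_{l-1}+T$ and the \emph{unbounded} starting energy $X_{0}$. The strong Markov property is what turns the post-$s$ noise into a fresh Wiener process and thereby delivers the conditional i.i.d.\ structure of the inputs $V_{k}$; without it the windows would fail to be independent and Borel--Cantelli would not apply. The unboundedness of $X_{0}$ means no single fixed-length run of small-noise windows suffices, so one must marry ``runs of any fixed length occur infinitely often'' with the uniform $L^{1}$ control $\sup_{k}\E[X_{k}\mid\mathcal{F}_{s}]<\infty$ (equivalently the uniform-in-time energy estimate \eqref{energy_estimate_inv}) to bound the energy at the start of a successful run. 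A subordinate point is the justification of the restarted form of \eqref{irred_key} and of $p>0$, both of which are minor variants of arguments already present in the proof of Proposition \ref{irreducible}.
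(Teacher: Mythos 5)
Your argument is correct in substance but follows a genuinely different route from the paper's. The paper never invokes the pathwise dissipativity estimate \eqref{irred_key}: instead it applies the It\^o formula to get a conditional bound on the time averages $\E\big(\int_{t}^{t+T}(\norm{u_1}_H^2+\norm{u_2}_H^2)ds\,\big|\,\mathcal{F}_t\big)\le c_0(\norm{u_1(t)}_H^2+\norm{u_2(t)}_H^2+T)$, sets $K_0=4c_0$, and uses Chebyshev on these averages over a sequence of intervals $[t_k,t_{k+1}]$ of carefully chosen lengths $r_k$ to derive the recursion $\P(A_{k+1})\le\tfrac34\P(A_k)+(\tfrac34)^k$ for the events $A_k$ that the energy stays above $K_0$ on all intervals up to index $k$; this gives $\P(A_k)\le k(\tfrac34)^k$ and hence a.s.\ finiteness of the first entrance time, following \cite{debussche15}. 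You instead recycle the Ornstein--Uhlenbeck decomposition from Proposition \ref{irreducible} to get the restarted contraction $X_{k+1}\le\tfrac12 X_k+C'V_k$ and finish with a small-noise-windows/Borel--Cantelli argument. Your approach buys a more transparent ``deterministic contraction plus fresh i.i.d.\ noise input'' picture and reuses machinery already established for the irreducibility result; the paper's approach is more self-contained (it needs neither the full-support property of $W_A$ nor the strong Markov property at the random time $\tau_{l-1}+T$, only the It\^o energy identity), at the cost of the somewhat opaque choice of the $r_k$.

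One step in your write-up deserves to be spelled out: knowing that the block events $B_j=\{V_k<\delta \text{ in block } j\}$ occur infinitely often and that $(X_{jn})$ is conditionally bounded in $L^1$ does not by itself yield that $B_j\cap\{X_{jn}\le M\}$ occurs infinitely often, since the two sequences of events are intertwined. What saves you is that $B_j$ is independent of $\mathcal{F}_{s+jnL}$ while $X_{jn}$ is $\mathcal{F}_{s+jnL}$-measurable, so $\P(B_j\cap\{X_{jn}\le M\}\,|\,\mathcal{F}_{s+jnL})=p^n\mathbf{1}_{X_{jn}\le M}$ and the L\'evy conditional Borel--Cantelli lemma gives $\{B_j\cap\{X_{jn}\le M\}\ \text{i.o.}\}\supseteq\{X_{jn}\le M\ \text{i.o.}\}$ a.s.; combined with the uniform conditional bound $\E[X_{jn}\,|\,\mathcal{F}_s]\le X_0+2C'\E V_0$ and a limit in $n$ (at fixed $M$, then $\epsilon\to0$ in the tail bound for $X_0$) this closes the argument. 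You correctly identify this as the delicate point, and the ingredients you list suffice, but the conditional version of Borel--Cantelli, rather than the unconditional second lemma, is what is actually needed.
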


\begin{proof}
	
	From the proof of Theorem \ref{inv_existence}, as a consequence of the It{\^o} formula and \eqref{sigma_additive_condition}, there exists some sonstant $c_{0}>0$ such that for every $t,T\geq0$,
	\begin{equation*}
		\begin{array}{ll}
			&\ds{\E\Bigg(\int_{t}^{t+T}\Big(\norm{u_{1}(s)}_{H}^{2}+\norm{u_{2}(s)}_{H}^{2}\Big)ds\Big|\mathcal{F}_{t}\Bigg) }\ds{\leq c_{0}\Big(\norm{u_{1}(t)}_{H}^{2}+\norm{u_{2}(t)}_{H}^{2}+T\Big)}\\
			\vs
			&\ds{\leq c_{0}\Bigg(\norm{u_{1,0}}_{H}^{2}+\norm{u_{2,0}}_{H}^{2}+\int_{0}^{t}\Inner{u_{1}(s),\sigma dw(s)}_{H}+\int_{0}^{t}\Inner{u_{2}(s),\sigma dw(s)}_{H}+T\Bigg) }
		\end{array}
	\end{equation*}
	We define the sequences $(t_{k})_{k\in\mathbb{N}}$ and $(r_{k})_{k\in\mathbb{N}}$ by
	\begin{equation*}
		t_{0}=0,\ \ \ t_{k+1}=t_{k}+r_{k},\ \ \ k\geq1,
	\end{equation*}
	where the sequence $(r_{k})_{k\in\mathbb{N}}$ will be determined later. Then if we set $K_{0}:=4c_{0}$, we have for each $k\in\mathbb{N}$ 
	\begin{equation*}
		\begin{array}{ll}
			&\ds{\P\Bigg(\inf_{s\in[t_{k},t_{k+1}]}\Big(\norm{u_{1}(s)}_{H}^{2}+\norm{u_{2}(s)}_{H}^{2}\Big)\geq K_{0}\Big|\mathcal{F}_{t_{k}} \Bigg) }\ds{\leq\P\Bigg(\frac{1}{r_{k}}\int_{t_{k}}^{t_{k}+r_{k}}\Big(\norm{u_{1}(s)}_{H}^{2}+\norm{u_{2}(s)}_{H}^{2}\Big)ds\geq K_{0}\Big|\mathcal{F}_{t_{k}}\Bigg) }\\
			\vs
			&\ds{\leq \frac{1}{4r_{k}}\Big(\norm{u_{1,0}}_{H}^{2}+\norm{u_{2,0}}_{H}^{2}+1\Big) }+\frac{1}{4r_{k}}\Big(\int_{0}^{t_{k}}\Inner{u_{1}(s),\sigma dw(s)}_{H}+\int_{0}^{t_{k}}\Inner{u_{2}(s),\sigma dw(s)}_{H}\Big)\\
			\vs
			&\ds{\leq \frac{1}{2}+\frac{1}{4r_{k}}\Big(\int_{0}^{t_{k}}\Inner{u_{1}(s),\sigma dw(s)}_{H}+\int_{0}^{t_{k}}\Inner{u_{2}(s),\sigma dw(s)}_{H}\Big) },
		\end{array}
	\end{equation*}
	where we have chosen $(r_{k})_{k\in\mathbb{N}}$ such that
	\begin{equation}\label{r_condition1}
		\frac{1}{2r_{k}}\Big(\norm{u_{1,0}}_{H}^{2}+\norm{u_{2,0}}_{H}^{2}+1\Big)\leq 1,\ \ \ k\in\mathbb{N}.
	\end{equation}
	Next, let us consider events
	\begin{equation*}
		A_{k}=\Big\{\inf_{s\in[t_{l},t_{l+1}]}\Big(\norm{u_{1}(s)}_{H}^{2}+\norm{u_{2}(s)}_{H}^{2}\Big)\geq K_{0},\ \ \ l=0,\dots,k-1\Big\},
	\end{equation*}
	 then since it follows from \eqref{energy_estimate_inv} that 
	\begin{equation*}
			\E\Bigg(\int_{0}^{t_{k}}\Inner{u_{i}(s),\sigma dw(s)}_{H}\Bigg)^{2}\leq c\text{Tr}_{H}\sigma^{2}\int_{0}^{t_{k}}\E\norm{u_{i}(s)}_{H}^{2}ds\leq ct_{k}\big(1+\norm{u_{i,0}}_{H}^{2}\big),\ \ \ i=1,2,
	\end{equation*}
	by the Young inequality, there exists some constant $c_{1}>0$ such that
	\begin{equation*}
		\begin{array}{ll}
			&\ds{\P(A_{k+1})\leq \frac{1}{2}\P(A_{k})+\frac{1}{4r_{k}}\E\Bigg(\mathbf{1}_{A_{k}}\int_{0}^{t_{k}}\Inner{u_{1}(s),\sigma dw(s)}_{H}\Bigg)+\frac{1}{4r_{k}}\E\Bigg(\mathbf{1}_{A_{k}}\int_{0}^{t_{k}}\Inner{u_{2}(s),\sigma dw(s)}_{H}\Bigg)  }\\
			\vs
			&\ds{\quad\quad\quad\quad\leq \frac{3}{4}\P(A_{k})+\frac{c_{1}}{r_{k}^{2}}\Big(1+\norm{u_{1,0}}_{H}^{2}+\norm{u_{2,0}}_{H}^{2}\Big)t_{k} },
		\end{array}
	\end{equation*}
	Hence, if we choose $(r_{k})_{k\in\mathbb{N}}$ be satisfying both \eqref{r_condition1} and that
	\begin{equation}\label{r_condition2}
		\frac{Ct_{k}}{r_{k}^{2}}\Big(1+\norm{u_{1,0}}_{H}^{2}+\norm{u_{2,0}}_{H}^{2}\Big)\leq \Big(\frac{3}{4}\Big)^{k},\ \ \ k\in\mathbb{N},
	\end{equation}
	then we obtain 
	\begin{equation*}
		\P(A_{k+1})\leq \frac{3}{4}\P(A_{k})+\Big(\frac{3}{4}\Big)^{k},\ \ \ k\in\mathbb{N},
	\end{equation*}
	and this gives us that
	\begin{equation*}
		\P(A_{k})\leq k\Big(\frac{3}{4}\Big)^{k},\ \ \ k\geq0.
	\end{equation*}
	Hence, by the Borel-Cantelli Lemma, it follows that 
	\begin{equation*}
		K_{0}:=\inf\Big\{k\geq0:\inf_{s\in[t_{k},t_{k+1}]}\Big(\norm{u_{1}(s)}_{H}^{2}+\norm{u_{2}(s)}_{H}^{2}\Big)\leq K_{0}\Big\}
	\end{equation*}
	is finite almost surely. This implies that the stopping time
	\begin{equation*}
		\tau:=\inf\Big\{t\geq0:\norm{u_{1}(s)}_{H}^{2}+\norm{u_{2}(s)}_{H}^{2}\leq K_{0}\Big\}
	\end{equation*}
	is also finite $\P$-almost surely since $\tau\leq t_{k_{0}+1}$. Therefore, for every $T>0$, the stopping times $\tau_{l}$, $l\in\mathbb{N}$, are also almost surely finite. 
\end{proof}

Finally, we are ready to state our conclusion on the uniqueness.

\begin{thm}\label{uniqueness_inv}
	Assume Hypotheses \ref{Hypothesis1} and \ref{Hypothesis3}\ref{nondeg_condition}. In addition, we assume the noise in \eqref{limit_para} is additive and the diffusion $b$ is bounded. Then, if $u_{1},u_{2}$ are generalized kinetic solutons of equation \eqref{limit_para} with initial data $u_{1,0},u_{2,0}\in L^{1}(\O)$, respectively, then 
	\begin{equation}
		\lim_{t\to+\infty}\norm{u_{1}(t)-u_{2}(t)}_{L^{1}(\mathcal{O})}=0,\ \ \P\text{-a.s.}.
	\end{equation} 
\end{thm}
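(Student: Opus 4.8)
The plan is to show that the $L^{1}$-distance between the two solutions is almost surely non-increasing, hence convergent, and then to force its limit to be $0$ by a coupling argument based on the smallness of the (common) additive noise. A first reduction brings the initial data into $H$: choosing $u_{i,0}^{n}\in H$ with $u_{i,0}^{n}\to u_{i,0}$ in $L^{1}(\O)$ and letting $v_{i}^{n}$ be the corresponding generalized kinetic solution, the almost sure $L^{1}$-contraction \eqref{L1_contraction_almost_surely}, applied to both $(u_{i}-v_{i}^{n})^{+}$ and $(u_{i}-v_{i}^{n})^{-}$, gives $\sup_{t\geq0}\norm{u_{i}(t)-v_{i}^{n}(t)}_{L^{1}(\O)}\leq\norm{u_{i,0}-u_{i,0}^{n}}_{L^{1}(\O)}$ almost surely, whence
\[
\limsup_{t\to\infty}\norm{u_{1}(t)-u_{2}(t)}_{L^{1}(\O)}\leq\norm{u_{1,0}-u_{1,0}^{n}}_{L^{1}(\O)}+\norm{u_{2,0}-u_{2,0}^{n}}_{L^{1}(\O)}+\limsup_{t\to\infty}\norm{v_{1}^{n}(t)-v_{2}^{n}(t)}_{L^{1}(\O)}.
\]
Granting the theorem for $H$-data the last term vanishes; intersecting the full-measure events over $n$ and letting $n\to\infty$ then settles the $L^{1}$ case. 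So it suffices to treat $u_{1,0},u_{2,0}\in H$.

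For $H$-data, I would apply \eqref{L1_contraction_almost_surely} with the two solutions restarted at an arbitrary time $s$ (using pathwise uniqueness and the flow property) to deduce that both $t\mapsto\norm{(u_{1}(t)-u_{2}(t))^{\pm}}_{L^{1}(\O)}$ are non-increasing, so that $\norm{u_{1}(t)-u_{2}(t)}_{L^{1}(\O)}$ decreases to some limit $L\geq0$ almost surely. By monotonicity, $\norm{u_{1}(t_{0})-u_{2}(t_{0})}_{L^{1}(\O)}<\epsilon$ at a single time $t_{0}$ already forces $L<\epsilon$; hence it is enough to prove that for every $\epsilon>0$ the distance drops below $\epsilon$ at some finite time almost surely, since taking $\epsilon=1/k$ and intersecting yields $L=0$ a.s.

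Fix $\epsilon>0$, let $K_{0}$ be the constant of Lemma \ref{finite_time}, put $M:=\sqrt{K_{0}}$, and choose $\epsilon'>0$ with $C_{\O}\sqrt{2\epsilon'}<\epsilon$, where $C_{\O}$ is the norm of the embedding $H\hookrightarrow L^{1}(\O)$. Let $T=T(\epsilon',M)$ and $\delta=\delta(\epsilon')$ be the constants of Corollary \ref{control_ball}. With this $T$, Lemma \ref{finite_time} gives stopping times $\tau_{l}$ with $\tau_{l+1}\geq\tau_{l}+T$, $\tau_{l}\geq lT\to\infty$, $\P(\tau_{l}<\infty)=1$, and $\norm{u_{1}(\tau_{l})}_{H}^{2}+\norm{u_{2}(\tau_{l})}_{H}^{2}\leq K_{0}$. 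Let $W_{A}^{l}$ be the stochastic convolution restarted at $\tau_{l}$ and $G_{l}:=\{\sup_{t\in[0,T]}\norm{W_{A}^{l}(t)}_{H^{1}}\leq\delta\}$; by the strong Markov property $G_{l}$ is independent of $\mathcal{F}_{\tau_{l}}$ with $\P(G_{l})=p:=\P(\sup_{t\in[0,T]}\norm{W_{A}(t)}_{H^{1}}\leq\delta)>0$, the positivity coming from the fullness of the law of $W_{A}$ in $C([0,T];H^{1})$ used in Proposition \ref{irreducible}. On $G_{l}$, Corollary \ref{control_ball} applied to each $u_{i}(\tau_{l}+\cdot)$ (whose datum satisfies $\norm{u_{i}(\tau_{l})}_{H}\leq M$) and averaging produce a random $t^{\ast}\in[0,T]$ with $\norm{u_{1}(\tau_{l}+t^{\ast})}_{H}^{2}+\norm{u_{2}(\tau_{l}+t^{\ast})}_{H}^{2}<2\epsilon'$, so $\norm{u_{1}(\tau_{l}+t^{\ast})-u_{2}(\tau_{l}+t^{\ast})}_{L^{1}(\O)}\leq C_{\O}\sqrt{2\epsilon'}<\epsilon$. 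Finally, since $\tau_{l+1}\geq\tau_{l}+T$ the intervals $[\tau_{l},\tau_{l}+T]$ are disjoint, $\bigcap_{l=1}^{n}G_{l}^{c}\in\mathcal{F}_{\tau_{n+1}}$ and $G_{n+1}^{c}$ is independent of $\mathcal{F}_{\tau_{n+1}}$ with probability $1-p$, so $\P(\bigcap_{l=1}^{n}G_{l}^{c})=(1-p)^{n}\to0$; thus some $G_{l}$ occurs a.s. and the $L^{1}$-distance drops below $\epsilon$ at the finite time $\tau_{l}+t^{\ast}$.

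The step I expect to be the main obstacle is the rigorous restarting at the stopping times $\tau_{l}$: one must justify, through pathwise uniqueness and the strong Markov (cocycle) property of the solution map, that $u_{i}(\tau_{l}+\cdot)$ is the solution issued from $u_{i}(\tau_{l})$ driven by the shifted noise, so that the pathwise estimate \eqref{irred_key} and Corollary \ref{control_ball} genuinely apply with the restarted convolution $W_{A}^{l}$, and that $G_{l}$ is independent of $\mathcal{F}_{\tau_{l}}$ with the uniform small-ball probability $p>0$. Once this is in place, the density reduction, the monotonicity, and the conditional Borel--Cantelli chaining are routine.
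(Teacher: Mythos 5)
Your proposal is correct and follows essentially the same route as the paper's proof: reduce to $H$-valued initial data via the almost-sure $L^{1}$-contraction, use the stopping times of Lemma \ref{finite_time} together with Corollary \ref{control_ball} and the smallness of the restarted stochastic convolution to force the $H$-norms (hence the $L^{1}$-distance) below $\epsilon$ with a conditional probability $\lambda>0$ uniform in $l$, and conclude by the geometric-decay/Borel--Cantelli argument combined with the monotonicity of $t\mapsto\norm{u_{1}(t)-u_{2}(t)}_{L^{1}(\mathcal{O})}$. The restart-at-$\tau_{l}$ issue you flag is handled in the paper exactly as you anticipate, via the strong Markov property of $W_{A}$, which makes the conditional small-ball probability non-random and independent of $l$.
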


\begin{proof} Let $u_{1},u_{2}$ be generalized kinetic solutions for equation \eqref{limit_para} with initial data $u_{1,0},u_{2,0}\in L^{1}(\O)$, respectively. Fix any $\epsilon>0$, we choose $\hat{u}_{1,0},\hat{u}_{2,0}\in H$ such that
	\begin{equation*}
		\norm{\hat{u}_{i,0}-u_{i,0}}_{L^{1}(\mathcal{O})}\leq \epsilon/4,\ \ \ i=1,2,
	\end{equation*}   
	where we denoted by $\hat{u}_{i}$ the unique weak solution of equation \eqref{limit_para} with initial data $\hat{u}_{i,0}$, for $i=1,2$, relatively.
	
	Let $K_{0}>0$ be the constant introduced in Lemma \ref{finite_time}. Then by Corollary \ref{control_ball} and thanks to the $L^{1}$-contraction property \eqref{L1_contraction_almost_surely}, we can choose $T>0$ and $\delta>0$ depending both only on $\epsilon$ such that, if we define stopping times $(\tau_{l})_{l\in\mathbb{N}}$ by
		\begin{equation*}
		\tau_{l}=\inf\Big\{t\geq\tau_{l-1}+T:\norm{\hat{u}_{1}(t)}_{H}^{2}+\norm{\hat{u}_{2}(t)}_{H}^{2}\leq K_{0} \Big\},\ \ \ \tau_{0}=0,
	\end{equation*}
	then $\tau_{l}$ is almost surely finite from Lemma \ref{finite_time}, and 
	\begin{equation*}
		\begin{array}{ll}
			\ds{\P\Bigg(\frac{1}{T}\int_{\tau_{l}}^{\tau_{l}+T}\norm{u_{1}(t)-u_{2}(t)}_{L^{1}(\mathcal{O})}dt\leq\epsilon\Big|\mathcal{F}_{\tau_{l}}\Bigg) }
			&\ds{\geq\P\Bigg(\frac{1}{T}\int_{\tau_{l}}^{\tau_{l}+T}\norm{\hat{u}_{1}(t)-\hat{u}_{2}(t)}_{L^{1}(\mathcal{O})}dt\leq \epsilon/2\Big|\mathcal{F}_{\tau_{l}}\Bigg) }\\
			\vs
			&\ds{\geq\P\Bigg(\frac{1}{T}\int_{\tau_{l}}^{\tau_{l}+T}\norm{\hat{u}_{1}(t)-\hat{u}_{2}(t)}_{H}dt\leq C\epsilon\Big|\mathcal{F}_{\tau_{l}}\Bigg) }\\
			\vs
			&\ds{\geq\P\Bigg(\sup_{t\in[\tau_{l},\tau_{l}+T]}\norm{W_{A}(t)-W_{A}(\tau_{l})}_{H^{1}}\leq \delta\Big|\mathcal{F}_{\tau_{l}}\Bigg) },
		\end{array}
	\end{equation*}
 where $W_{A}(t)$ is the random process defined in Proposition \ref{irreducible}. By the strong Markov property, the term on the right-hand side $$\P\Bigg(\sup_{t\in[\tau_{l},\tau_{l}+T]}\norm{W_{A}(t)-W_{A}(\tau_{l})}_{H^{1}}\leq \delta\Big|\mathcal{F}_{\tau_{l}}\Bigg)$$
is non-random and independent on $l$, which depends only on $\epsilon$. Moreover, it is clearly positive. We denote it by some scalar $\lambda=\lambda(\epsilon)$. Now for every $l_{0}\in\mathbb{N}$ and $k\in\mathbb{N}$, we have
\begin{equation*}
	\P\Bigg(\frac{1}{T}\int_{\tau_{l}}^{\tau_{l}+T}\norm{u_{1}(s)-u_{2}(s)}_{L^{1}(\mathcal{O})}ds\geq\epsilon,\ \ \text{for }l=l_{0},\dots,l_{0}+k \Bigg)\leq (1-\lambda)^{k},
\end{equation*}
and this implies that
\begin{equation}\label{uniqueness_key}
	\begin{array}{ll}
		&\ds{\P\Bigg(\lim_{l\to\infty}\frac{1}{T}\int_{\tau_{l}}^{\tau_{l}+T}\norm{u_{1}(s)-u_{2}(s)}_{L^{1}(\mathcal{O})}ds\geq\epsilon \Bigg)}\\
		\vs
		&\ds{=\P\Bigg(\exists l_{0}\in\mathbb{N}:\frac{1}{T}\int_{\tau_{l}}^{\tau_{l}+T}\norm{u_{1}(s)-u_{2}(s)}_{L^{1}(\mathcal{O})}ds\geq\epsilon,\ \ \text{for }l\geq l_{0}\Bigg)=0 }
	\end{array}
\end{equation}
Note that \eqref{L1_contraction_almost_surely} yields the mapping $t\mapsto\norm{\rho^{1}(t)-\rho^{2}(t)}_{L^{1}(\mathcal{O})}$ is $\P$-almost surely non-increasing, so the limit in the left-hand side of \eqref{uniqueness_key} exists. Therefore, \eqref{uniqueness_key} deduces that
\begin{equation*}
	\P\Big(\lim_{t\to\infty}\norm{u_{1}(t)-u_{2}(t)}_{L^{1}(\mathcal{O})}\geq\epsilon\Big)=0.
\end{equation*}
Due to the arbitrariness of $\epsilon>0$, we complete the proof.
\end{proof}

As a consequence of Theorem \ref{uniqueness_inv}, the uniqueness of invariant measure for $P_{t}$ in $L^{1}(\O)$ holds.
\begin{cor}\label{inv_uniqueness}
	Assume Hypotheses \ref{Hypothesis1} and \ref{Hypothesis3}\ref{nondeg_condition}. In addition, we assume the noise in \eqref{limit_para} is additive and the diffusion $b$ is bounded. Then there is at most one invariant measure for $P_{t}$ associated with equation \eqref{limit_para} in $L^{1}(\mathcal{O})$. 
\end{cor}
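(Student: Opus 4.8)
The plan is to derive uniqueness directly from the almost sure asymptotic collapse of trajectories proved in Theorem \ref{uniqueness_inv}, via the standard principle that two invariant measures must integrate every bounded continuous test function to the same value. Suppose $\nu_{1}$ and $\nu_{2}$ are both invariant measures for $P_{t}$ on $L^{1}(\mathcal{O})$. Since $L^{1}(\mathcal{O})$ is a separable Banach space, hence Polish, it suffices to show that $\int\varphi\,d\nu_{1}=\int\varphi\,d\nu_{2}$ for every bounded, uniformly continuous $\varphi:L^{1}(\mathcal{O})\to\mathbb{R}$, because such functions separate Borel probability measures on a Polish space.

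First I would use invariance to freeze the test integrals at an arbitrary time $t$: for $i=1,2$ and every $t\geq0$,
\[
\int_{L^{1}(\mathcal{O})}\varphi\,d\nu_{i}=\int_{L^{1}(\mathcal{O})}P_{t}\varphi(z)\,d\nu_{i}(z)=\int_{L^{1}(\mathcal{O})}\E\,\varphi(u^{z}(t))\,d\nu_{i}(z).
\]
Integrating the identity for $\nu_{1}$ against $\nu_{2}$ (a probability measure), doing the symmetric thing for $\nu_{2}$, and subtracting gives
\[
\int\varphi\,d\nu_{1}-\int\varphi\,d\nu_{2}=\int\int\Big[\E\,\varphi(u^{z_{1}}(t))-\E\,\varphi(u^{z_{2}}(t))\Big]\,d\nu_{1}(z_{1})\,d\nu_{2}(z_{2}).
\]
The crucial step is then to realize the two solutions $u^{z_{1}}$ and $u^{z_{2}}$ on a common stochastic basis, driven by the \emph{same} cylindrical Wiener process $w$; since $P_{t}\varphi(z)$ depends only on the law of $u^{z}(t)$, this coupling leaves the integrals above unchanged while bringing Theorem \ref{uniqueness_inv} into force. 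For $\nu_{1}\otimes\nu_{2}$-almost every pair $(z_{1},z_{2})$, Theorem \ref{uniqueness_inv} yields $\norm{u^{z_{1}}(t)-u^{z_{2}}(t)}_{L^{1}(\mathcal{O})}\to0$ $\P$-a.s. as $t\to+\infty$. Using that $\varphi$ is continuous and bounded on $L^{1}(\mathcal{O})$, dominated convergence gives $\E[\varphi(u^{z_{1}}(t))-\varphi(u^{z_{2}}(t))]\to0$ for such $(z_{1},z_{2})$, and a second application of dominated convergence (with dominating constant $2\norm{\varphi}_{\infty}$) passes the limit through the double integral against $\nu_{1}\otimes\nu_{2}$. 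Letting $t\to+\infty$ forces $\int\varphi\,d\nu_{1}=\int\varphi\,d\nu_{2}$, and as $\varphi\in C_{b}(L^{1}(\mathcal{O}))$ was arbitrary we conclude $\nu_{1}=\nu_{2}$.

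The only delicate point, which I would spell out, is the coupling construction together with the two interchanges of limit and integration. The almost sure convergence in Theorem \ref{uniqueness_inv} requires both solutions to be built from a single realization of the noise, whereas $P_{t}$ a priori records only marginal laws; one must therefore observe that $\E\,\varphi(u^{z}(t))$ is invariant under the choice of stochastic basis, so the coupled realization computes exactly the same quantity. Once this is granted, the two passages to the limit are routine consequences of $\varphi\in C_{b}(L^{1}(\mathcal{O}))$. It is worth recording that no moment bounds on $\nu_{i}$ are needed in this argument—boundedness of $\varphi$ suffices—so the reasoning applies to any pair of invariant measures supported on $L^{1}(\mathcal{O})$, yielding the asserted uniqueness.
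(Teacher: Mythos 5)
Your proof is correct and follows the same route as the paper, which simply invokes Theorem \ref{uniqueness_inv} without spelling out the details; your writeup supplies exactly the standard coupling-plus-dominated-convergence argument that the paper leaves implicit. The one small point to keep consistent is that you should work throughout with bounded \emph{uniformly} continuous test functions (as you do at the outset), since uniform continuity is what converts $\norm{u^{z_{1}}(t)-u^{z_{2}}(t)}_{L^{1}(\mathcal{O})}\to0$ into $\varphi(u^{z_{1}}(t))-\varphi(u^{z_{2}}(t))\to0$, and such functions still separate Borel probability measures on the Polish space $L^{1}(\mathcal{O})$.
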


Finally, combining Theorem \ref{inv_existence} together with Corollary \ref{inv_uniqueness}, we obtain Theorem \ref{ergodic_thm}.
 


\appendix

\section{Proof of Lemma \ref{contraction}}
\label{sec_appendix}

Our proof is a modification of \cite[Proof of Proposition 13]{debussche14}, where A. Debussche and J. Vovelle proved a version of the doubling of variables for kinetic solutions of a scalar conservation law with a stochastic forcing on the torus $\mathbb{T}^{d}$.

Let $u_{1},u_{2}$ be generalized kinetic solutions to \eqref{limit_para}. We denote $h_{1}=\mathbf{1}_{u_{1}>\xi}$, $h_{2}=\mathbf{1}_{u_{2}>\xi}$ with the corresponding Young measures $\nu^{1}=\delta_{u_{1}}$, $\nu^{2}=\delta_{u_{2}}$ and  the corresponding kinetic measures $m_{1},m_{2}$, respectively. Let $\varphi_{1}\in C^{\infty}_{c}(\mathcal{O}_{x}\times\mathbb{R}_{\xi})$  and $\varphi_{2}\in C^{\infty}_{c}(\mathcal{O}_{y}\times\mathbb{R}_{\zeta})$. Then, thanks to \eqref{kinetic_test}, we have $\P$-a.s., 
\begin{equation*}
	\dInner{h_{1}^{+}(t),\varphi_{1}}=\dInner{m_{1}^{\ast},\partial_{\xi}\varphi_{1}}[0,t]+H_{1}(t),\ \ \ \ H_{1}(t):=\sum_{k=1}^{\infty}\int_{0}^{t}\int_{\mathcal{O}}\int_{\mathbb{R}}\varphi_{1}\sigma_{k}(x,\xi)d\nu_{x,s}^{1}(\xi)dxd\beta_{k}(s),
\end{equation*}
where
\begin{equation*}
	\begin{array}{ll}
		&\ds{\dInner{m_{1}^{\ast},\partial_{\xi}\varphi_{1}}([0,t])=\dInner{h_{1,0},\varphi_{1}}\delta_{0}([0,t])+\int_{0}^{t}\dInner{h_{1},b\cdot \nabla_{x}^{2}\varphi_{1}}ds }\\
		\vs
		&\ds{\quad\quad\quad\quad +\frac{1}{2}\int_{0}^{t}\int_{\mathcal{O}}\int_{\mathbb{R}}\partial_{\xi}\varphi_{1}\Sigma^{2}(x,\xi)d\nu_{x,s}^{1}(\xi)dxds-\dInner{m_{1},\partial_{\xi}\varphi_{1}}([0,t]) }.
	\end{array}
\end{equation*}
Similarly, we have $\P$-a.s.,
\begin{equation*}
	\dInner{\overline{h}_{2}^{+}(t),\varphi_{2}}=\dInner{m_{2}^{\ast},\partial_{\zeta}\varphi_{2}}[0,t]+\overline{H}_{2}(t),\ \ \ \ \overline{H}_{2}(t):=-\sum_{k=1}^{\infty}\int_{0}^{t}\int_{\mathcal{O}}\int_{\mathbb{R}}\varphi_{2}\sigma_{k}(y,\zeta)d\nu_{y,s}^{2}(\zeta)dyd\beta_{k}(s),
\end{equation*}
where
\begin{equation*}
	\begin{array}{ll}
		&\ds{\dInner{\overline{m}_{2}^{\ast},\partial_{\zeta}\varphi_{2}}([0,t])=\dInner{\overline{h}_{2,0},\varphi_{2}}\delta_{0}([0,t])+\int_{0}^{t}\dInner{\overline{h}_{2},b\cdot \nabla_{y}^{2}\varphi_{2}}ds }\\
		\vs
		&\ds{\quad\quad\quad\quad -\frac{1}{2}\int_{0}^{t}\int_{\mathcal{O}}\int_{\mathbb{R}}\partial_{\zeta}\varphi_{2}\Sigma^{2}(y,\zeta)d\nu_{y,s}^{2}(\zeta)dxds+\dInner{m_{2},\partial_{\zeta}\varphi_{2}}([0,t]) }.
	\end{array}
\end{equation*}
Next, using the It{\^o} formula for the product $H_{1}(t)\overline{H}_{2}(t)$ and taking integration by parts, we have
\begin{equation*}
	\begin{array}{ll}
		&\ds{\int_{\mathcal{O}^{2}}\int_{\mathbb{R}^{2}}\Big(h_{1}^{+}(t)\overline{h}_{2}^{+}(t)-h_{1,0}\overline{h}_{2,0}\Big)\varphi_{1}(x,\xi)\varphi_{2}(y,\zeta) d\xi d\zeta dxdy }\\
		\vs
		&\ds{=\int_{0}^{t}\dInner{h_{1}^{+}(s),\varphi_{1}}d\dInner{\overline{m}^{\ast}_{2},\partial_{\zeta}\varphi_{2}}(s)+\int_{0}^{t}\dInner{\overline{h}_{2}^{+}(s),\varphi_{2}}d\dInner{m^{\ast}_{1},\partial_{\xi}\varphi_{1}}(s) }\\
		\vs
		&\ds{\quad+\int_{0}^{t}\dInner{h_{1}^{+}(s),\varphi_{1}}d\overline{H}_{2}(s)+\int_{0}^{t}\dInner{\overline{h}_{2}^{+}(s),\varphi_{2}}dH_{1}(s) }\\
		\vs
		&\ds{\quad -\sum_{k=1}^{\infty}\int_{0}^{t}\int_{\mathcal{O}^{2}}\int_{\mathbb{R}^{2}}\varphi_{1}(x,\xi)\varphi_{2}(y,\zeta)\sigma_{k}(x,\xi)\sigma_{k}(y,\zeta)d\nu_{x,s}^{1}(\xi)d\nu_{y,s}^{2}(\zeta)dxdyds}
	\end{array}
\end{equation*}
Now, let $\varphi_{3}\in C^{\infty}_{c}(\mathbb{R}_{\eta})$ and $\alpha(x,\xi,y,\zeta,\eta)=\varphi_{1}(x,\xi)\varphi_{2}(y,\zeta)\varphi_{3}(\eta)$. Then, it follows that 
\begin{equation}\label{double_ana1}
	\int\Big(h_{1}^{+}(t)\overline{h}_{2}^{+}(t)-h_{1,0}\overline{h}_{2,0}\Big)\alpha =:\sum_{i=1}^{5}I_{i},\ \ \ \P\text{-a.s.},
\end{equation}
where the integral is taken with respect to $(x,y,\xi,\zeta,\eta)\in \O^{2}\times\mathbb{R}^{3}$ by convention as in Section \ref{L1},
\begin{equation*}
	I_{1}:=\int_{0}^{t}\int h_{1}(s)\overline{h}_{2}(s)\Big(b(\xi)\cdot \nabla_{x}^{2}+b(\zeta)\cdot \nabla_{y}^{2}\Big)\alpha ,
\end{equation*}

\begin{equation*}
	\begin{array}{ll}
		\ds{I_{2}}
		&\ds{=-\frac{1}{2}\int_{0}^{t}\int h_{1}(s)\partial_{\zeta}\alpha\ \Sigma^{2}(y,\zeta)d\nu_{y,s}^{2}(\zeta)+\frac{1}{2}\int_{0}^{t}\int\overline{h}_{2}(s)\partial_{\xi}\alpha\ \Sigma^{2}(x,\xi)d\nu_{x,s}^{1}(\xi) }\\
		\vs
		&\ds{\quad -\sum_{k=1}^{\infty}\int_{0}^{t}\int\alpha\  \sigma_{k}(x,\xi)\sigma_{k}(y,\zeta)d\nu_{x,s}^{1}(\xi)d\nu_{y,s}^{2}(\zeta)},
	\end{array}
\end{equation*}

\begin{equation*}
	I_{3}=\int_{0}^{t}\int h_{1}^{-}(s)\partial_{\zeta}\alpha dm_{2}(y,s,\zeta),\ \ \ \  I_{4}=-\int_{0}^{t}\int\overline{h}_{2}^{+}(s)\partial_{\xi}\alpha dm_{1}(x,s,\xi),
\end{equation*}
and
\begin{equation*}
		I_{5}=-\sum_{k=1}^{\infty}\int_{0}^{t}\int h_{1}(s)\alpha \sigma_{k}(y,\zeta)d\nu_{y,s}^{2}(\zeta)d\beta_{k}(s)+\sum_{k=1}^{\infty}\int_{0}^{t}\int\overline{h}_{2}(s)\alpha \sigma_{k}(x,\xi)d\nu_{x,s}^{1}(\xi)d\beta_{k}(s).
\end{equation*}
By a standard density argument, we have \eqref{double_ana1} remains true for all test functions $\alpha\in C^{\infty}_{0}\big(\mathcal{O}_{x}\times\mathbb{R}_{\xi}\times\mathcal{O}_{y}\times\mathbb{R}_{\zeta}\times \mathbb{R_{\eta}}\big)$. Then taking $\alpha(x,\xi,y,\zeta,\eta)=\phi(x,y)K(\eta)\psi(\eta-\xi)\psi(\eta-\zeta)$, where $0\leq\phi\in C^{\infty}_{c}(\O^{2})$ and $\psi,K$ are defined in \eqref{mollifier}, from integration by parts we obtain that
\begin{equation}\label{I_1}
	I_{1}=\int_{0}^{t}\int h_{1}(s)\overline{h}_{2}(s)\Big(b(\xi)\nabla_{x}^{2}+b(\zeta)\nabla_{y}^{2}\Big)\phi(x,y)K(\eta)\psi(\eta-\xi)\psi(\eta-\zeta),
\end{equation}

\begin{equation}\label{I_2}
	\begin{array}{ll}
		\ds{I_{2}}
		&\ds{=\frac{1}{2}\sum_{k=1}^{\infty}\int_{0}^{t}\int\phi(x,y)K(\eta)\psi(\eta-\xi)\psi(\eta-\zeta)\big\lvert \sigma_{k}(x,\xi)-\sigma_{k}(y,\zeta)\big\rvert^{2}d\nu_{x,s}^{1}(\xi)d\nu_{y,s}^{2}(\zeta) }\\
		\vs
		&\ds{\quad-\frac{1}{2}\int_{0}^{t}\int h_{1}(s)\phi(x,y)K'(\eta)\psi(\eta-\xi)\psi(\eta-\zeta)\Sigma^{2}(y,\zeta)d\nu_{y,s}^{2}(\zeta) }\\
		\vs
		&\ds{\quad+\frac{1}{2}\int_{0}^{t}\int\overline{h}_{2}(s)\phi(x,y)K'(\eta)\psi(\eta-\xi)\psi(\eta-\zeta)\Sigma^{2}(x,\xi)d\nu_{x,s}^{1}(\xi) },
	\end{array}
\end{equation}
and
\begin{equation}\label{I_5}
	\begin{array}{ll}
		\ds{I_{5}}
		&\ds{=\sum_{k=1}^{\infty}\int_{0}^{t}\int\overline{h}_{2}(s)\phi(x,y)K(\eta)\psi(\eta-\xi)\psi(\eta-\zeta)\Big(\sigma_{k}(x,\xi)-\sigma_{k}(y,\zeta)\Big)d\nu_{x,s}^{1}(\xi)d\beta_{k}(s) }\\
		\vs
		&\ds{\quad+\sum_{k=1}^{\infty}\int_{0}^{t}\int h_{1}(s)\overline{h}_{2}(s)\phi(x,y)K'(\eta)\psi(\eta-\xi)\psi(\eta-\zeta)\sigma_{k}(y,\zeta)d\beta_{k}(s) }.
	\end{array}
\end{equation}
Concerning about terms $I_{3}$ and $I_{4}$, from Definition \ref{def_gen_kinetic_solution} it follows that 
\begin{equation*}
	\begin{array}{ll}
		\ds{I_{3}}
		&\ds{\leq-\int_{0}^{t}\int\phi(x,y)K(\eta)\psi(\eta-\xi)d\nu_{x,s}^{1}(\xi) dn_{2}^{\psi(\eta-\cdot)}(y,s)  }\\
		\vs
		&\ds{\quad+\int_{0}^{t}\int h_{1}^{-}(s)\phi(x,y)K'(\eta)\psi(\eta-\xi)\psi(\eta-\zeta) dm_{2}(y,s,\zeta) }\\
		\vs
		&\ds{\leq-\int_{0}^{t}\int_{\mathcal{O}^{2}}\int_{\mathbb{R}}\phi(x,y)K(\eta)\psi(\eta-u_{1})\cdot\Big\lvert \nabla_{y}\int_{0}^{u_{2}}\sqrt{\psi(\eta-\zeta)b(\zeta)}d\zeta\Big\rvert^{2} }\\
		\vs
		&\ds{\quad+\int_{0}^{t}\int h_{1}^{-}(s)\phi(x,y)K'(\eta)\psi(\eta-\xi)\psi(\eta-\zeta) dm_{2}(y,s,\zeta) }
	\end{array}
\end{equation*}
Similarly, we obtain that  
\begin{equation*}
	\begin{array}{ll}
		\ds{I_{4}}
		&\ds{\leq-\int_{0}^{t}\int_{\mathcal{O}^{2}}\int_{\mathbb{R}}\phi(x,y)K(\eta)\psi(\eta-u_{2})\cdot\Big\lvert \nabla_{x}\int_{0}^{u_{1}}\sqrt{\psi(\eta-\xi)b(\xi)}d\xi\Big\rvert^{2} }\\
		\vs
		&\ds{\quad-\int_{0}^{t}\int\overline{h}_{2}^{+}(s)\phi(x,y)K'(\eta)\psi(\eta-\xi)\psi(\eta-\zeta) dm_{1}(x,s,\xi) }
	\end{array}
\end{equation*}
Hence, by making a use of the chain rule formulation \eqref{chain_rule} in Definition \ref{def_gen_kinetic_solution}, we have
\begin{equation}\label{I_3_I_4}
	\begin{array}{ll}
		\ds{I_{3}+I_{4}}
		&\ds{\leq 2\int_{0}^{t}\int_{\mathcal{O}^{2}}\int_{\mathbb{R}}\phi(x,y)K(\eta)\Big( \nabla_{x}\int_{0}^{u_{1}}\psi(\eta-\xi)\sqrt{b(\xi)}d\xi\Big)\cdot \Big(\nabla_{y}\int_{0}^{u_{2}}\psi(\eta-\zeta)\sqrt{b(\zeta)}d\zeta\Big) }\\
		\vs
		&\ds{\quad-\int_{0}^{t}\int\overline{h}_{2}^{+}(s)\phi(x,y)K'(\eta)\psi(\eta-\xi)\psi(\eta-\zeta) dm_{1}(x,s,\xi) }\\
		\vs
		&\ds{\quad+\int_{0}^{t}\int h_{1}^{-}(s)\phi(x,y)K'(\eta)\psi(\eta-\xi)\psi(\eta-\zeta) dm_{2}(y,s,\zeta) }\\
		\vs
		&\ds{= 2\int_{0}^{t}\int h_{1}(s)\overline{h}_{2}(s)\sqrt{b(\xi)b(\zeta)}\nabla_{x,y}^{2}\phi(x,y)K(\eta)\psi(\eta-\xi)\psi(\eta-\zeta) }\\
		\vs
		&\ds{\quad-\int_{0}^{t}\int \overline{h}_{2}^{+}(s)\phi(x,y)K'(\eta)\psi(\eta-\xi)\psi(\eta-\zeta) dm_{1}(x,s,\xi) }\\
		\vs
		&\ds{\quad+\int_{0}^{t}\int h_{1}^{-}(s)\phi(x,y)K'(\eta)\psi(\eta-\xi)\psi(\eta-\zeta) dm_{2}(y,s,\zeta) },
	\end{array}
\end{equation}
Finally, combining \eqref{I_1}-\eqref{I_3_I_4} and taking the expectation on both sides of \eqref{double_ana1}, we complete the proof of Lemma \ref{contraction}.

\end{document}